\newtheorem{theo}{Theorem}
\newtheorem{lem}{Lemma}
\newtheorem{defi}{Definition}
\newtheorem{prop}{Proposition}
\newtheorem{cor}{Corollary}
\theoremstyle{remark}
\newtheorem{remark}{Remark}
\newtheorem{example}{Example}
\theoremstyle{definition}
\newtheorem{assumption}{Assumption}
\def\band{\mathrm{b}}
\def\diffop{\mathbf{\Delta}}
\def\E{\mathbb{E}} 
\def\R{\mathbb{R}}
\def\Z{\mathbb{Z}} 
\def\C{\mathbb{C}} 
\def\calT{\mathcal{T}} 
\def\calN{\mathcal{N}} 
\def\bD{\mathbf{D}} 
\def\be{\mathbf{e}} 
\def\1{\mathbbm{1}} 
\def\rme{\mathrm{e}}
\def\rmi{\mathrm{i}}
\def\rmd{\mathrm{d}}
\def\bw{\mathbf{w}}
\def\constantePsiD{\mathbf{K}}
\newenvironment{enum_W}
  {%
  \setlength{\leftmargini}{4em}\begin{enumerate}}
  {\end{enumerate}}
\begin{document}

\title{Locally stationary long memory estimation}
\date{April 8, 2010}

\author{Fran\c{c}ois Roueff} 
\address{Institut Telecom, Telecom Paris, CNRS LTCI, 46 rue Barrault, 75634 Paris Cedex 13,
  France}
\email{roueff@telecom-paristech.fr}
\author{Rainer von Sachs}
\address{Institut de statistique, Universit\'e catholique de Louvain, Voie du Roman Pays, 20,
B-1348 Louvain-la-Neuve, Belgium}
\email{rvs@uclouvain.be}

\begin{abstract}
  There exists a wide literature on parametrically or semi-parametrically
  modeling strongly dependent time series using a long-memory parameter $d$,
  including more recent work on wavelet estimation. As a generalization of
  these latter approaches, in this work we allow the long-memory parameter $d$ to
  be varying over time. We adopt a semi-parametric approach in order to avoid
  fitting a time-varying parametric model, such as tvARFIMA, to the observed
  data. We study the asymptotic behavior of a local log-regression wavelet
  estimator of the time-dependent $d$. Both simulations and a real data example
  complete our work on providing a fairly general approach.
\end{abstract}
\subjclass{Primary 62M10, 62M15, 62G05 Secondary: 60G15.}

\keywords{locally stationary process, long memory, semi-parametric estimation, wavelets.}

\maketitle

\section{Introduction}\label{sec:1}
There is a long tradition of modelling the phenomenon of long-range dependence
in observed data that show a strong persistence of their correlations by
long-memory processes. Such data can typically be found in the applied sciences
such as hydrology, geophysics, climatology and telecommunication
(e.g. teletraffic data) but recently also in economics and in finance, e.g. for
modelling (realized) volatility of exchange rate data or stocks. The literature
on stationary long-memory processes is huge (see e.g. the references in the
recent survey paper~\cite{fay-mouline-roueff-taqqu-2009}), and we concentrate
here on the discussion of long-range dependence resulting from a singularity of
the spectral density at zero frequency - corresponding to a slow,
i.e. polynomial, decay of the autocorrelation of the data. Although a lot of
(earlier) work started from a parametric approach, using e.g. the celebrated
ARFIMA-like models, it occurs that since the seminal work by P. Robinson (see
\cite{robinson:1995:GSE,robinson:1995:GPH}), the semi-parametric approach is
known to be more robust against model misspecification: instead of using a
parametric filter describing both the singularity of the spectral density at
zero frequency and the ARMA-dynamics of the short memory part, only the
singular behavior of the spectrum at zero is modelled by the long-memory
parameter, $d$ say, whereas the short memory part remains completely
non-parametric.

Driven by the empirical observation that the correlation structure of observed
(weakly or strongly dependent) data can change over time, there is a also a
growing literature on modelling departures from covariance-stationarity, mainly
restricted to the short-range dependent case. One prominent approach, that we
adopt in this paper, too, is the model of local stationarity, introduced by a
series of papers by R. Dahlhaus (\cite{dahlhaus96,dahlhaus:1997,dahlhaus2000}):
in a non-parametric set-up, the spectral structure of the underlying stochastic
process is allowed to be smoothly varying over time. Of course, time-varying
linear processes (of ARMA type) arise as a subclass of these locally stationary
processes. In order to come up with a rigorous asymptotic theory of consistency
and inference, the time-dependence of the spectral density $f(u,\lambda)$ of
such locally stationary processes is modelled to be in rescaled time $u\ \in
[0,1]$, leading to a problem of non-parametric curve estimation: increasing the
sample size $T$ of the observed time series does no more mean to look into the
future but to dispose of more and more observations to identify
$f(t/T,\lambda)$ locally around the ``reference'' rescaled time point $u
\approx t/T$.

In the aforementioned spirit of semi-parametric modelling, and in contrast to
the parametric approach of \cite{beran:2009}, one of the very few existing
approaches on time-varying long-memory modelling, we consider in this paper a
locally stationary long-range dependent process with a singularity in the
spectral density at zero frequency which is parameterized by a time-varying
long-memory parameter $d=d(u)$, $u\in[0,1]$, i.e. defined in rescaled time. Our
approach is a true generalization of the stationary approach in that the latter
corresponds to a time-constant $d$ for our locally stationary model. As in the
case of \cite{moulines:roueff:taqqu:2007:jtsa}, the long memory parameter is
estimated by a log-regression of a series of wavelet scalograms (estimated
wavelet variances per scale by summing the squared wavelet coefficients per
scale over location) onto a range of scales (corresponding to the low frequency
range of the spectrum). Although wavelets do not improve the estimation of $d$
in the standard stationary context $-1/2<d<1/2$, their use is of interest in
various practical situations (presence of trends, under and over-differenced
series leading to $d\geq1/2$ and $d\leq-1/2$ respectively), see details
in~\cite{fay-mouline-roueff-taqqu-2009}.
However, in our work now the challenge is to \emph{localize} the estimation of
the no more constant parameter $d$. Wavelets are favorable in this situation
since, in contrast to a Fourier analysis, they are well localized both over
time and frequency, i.e. scale. The localization is achieved by smoothing over
time the series of squared wavelet coefficients on each of the coarse scales
which enter into the log-regression, giving raise to a \emph{local
  scalogram}. We propose both a more traditional method based on two-sided
kernels and also a recursive scheme of one-sided smoothing weights, adapted to
the end point of the observation period.

The model studied in this paper arises naturally in the now long history of
time series modelling in presence of persistent memory.  A survey on this
subject is provided in \cite{samo:2006}.  In Chapter~3 of this reference, it is
recalled how long memory and non stationarity have been used as concurrent
modelling approaches, in particular for financial data, see \textit{e.g.}
\cite{mikosch-starica-2004}.  Long memory modeling for financial data goes back
to \cite{ding-granger-engle1993}. Originally investigated on absolute powers of
stock returns, long memory models are currently widely used for realized
volatility data since they were proven efficient for forecasting purposes in
this context in \cite{ABDL-2003}. It is interesting to note that only 3 years
after \cite{ding-granger-engle1993}, the need for time varying long memory
parameters was pointed out in \cite{ding-granger-1996}, see Section~5 and in
particular Figure~6 where an estimated $d$ is plotted evolving between the
values $0.358$ and $0.714$ over a 60 years long period. In this reference two
approaches are suggested for coping with a time varying $d$, namely, a
stochastically evolving $d$ or a regime switching between, say, two values of
$d$.  These two approaches have been developed, respectively, in
\cite{ray-tsay-2002} and \cite{haldrup-nielsen-2006} (see also
\cite{guegan:lu:2009} where singularities in the spectral density at
frequencies different from zero are considered in a piecewise stationary
context).

As outlined earlier the alternative approach developed by Dahlhaus for locally
stationary (short memory) processes is quite appealing as it allows a
meaningful asymptotic study of the estimators.  It was recently used for
volatility estimation using time varying (short memory) non-linear processes,
see \cite{dahlhaus-subbarao-2006,fryzlewicz-sapatinas-subbarao-2008}.  The
first attempt to adapt Dahlhaus's approach in the presence of long memory
appears to be the unpublished preprint \cite{jensen:whitcher:2000}. The authors
use the log linear relationship of the local variance of the maximum overlap
discrete wavelet transform and their scaling parameter, plus a localization
with a rectangular window in coefficient domain, to estimate the time-varying
long memory parameter. However, the asymptotic analysis of the proposed
estimator is not provided. Although it is not essential in their analysis, the
considered model is a tvARFIMA$(q,d,r)$, see our examples below.  An asymptotic
analysis is provided in \cite{beran:2009} for a different estimator applied to
the same model.  Roughly speaking, the standard way to estimate a time varying
parameter $d(u)$ at rescaled time $u=t/T\in(0,1)$ of a locally stationary
process is to use that, for a sample size $T$, $Tb$ observations around time
index $t$ approximately behave as a stationary sample as the bandwidth
parameter $b\to0$.  In \cite[Theorems~1]{beran:2009} the proposed estimator of
the time varying long memory parameter $d$ is claimed to satisfy a central
limit theorem at rate $(Tb)^{-1/2}$. On the other hand Theorem 2 in
\cite{beran:2009} says that the bias is of order $b^2$ if $d$ is two times
continuously differentiable. Such results are somewhat similar to that for
estimating the time varying parameter of a locally stationary short memory
process, see \cite{moulines-priouret-roueff-2005}, or, in a more general
fashion, Example 3.6 in \cite{dahlhaus-2009}. Hence the presence of long memory
in \cite{beran:2009} seems not to affect the estimation rates. It can be
explained by the parametric approach for the correlation structure of the
observed locally stationary time series in that the filter in the linear
(although infinite) locally autoregressive representation of the process is
completely determined by a finite-dimensional parameter. It follows that only a
finite number of local correlations are needed to determine the local parameter
$d$. In other words the estimation of $d(u)$ can be obtained from an analysis
over a fixed set of frequencies taken away from zero. This would however induce a
high sensitivity to model misspecification. In contrast, as usual in
semi-parametric vs parametric estimation, we want our approach to be robust against
model misspecification. To impose this robustness in the semi-parametric
model, the memory parameter $d$ is disconnected from the spectral content out
of the origin so that short range correlations does not carry any information
about this parameter. 
This fully justifies the semi-parametric context even though it is more
involved as it necessitates a \textit{low frequency} analysis (where the long
memory behavior occurs) which, at first sight, seems contradictory to the
\textit{local} stationarity framework. In fact, this contradiction is inherent
to any locally stationary model which rely on a compromise between
stationarity, which appears at small scales, and analyzing bandwidth, which
requires a large scale to decrease the randomness in the data. As a consequence
practical applications of these models require very long data sets.
Our results will prove that this apparent contradiction can still be overcome
for locally stationary long memory models, but with some price to pay on the
rate of estimation (although we are unable to prove that this price is
optimal).  It is not as surprising as it may appear. To understand why,
consider a piecewise stationary context where a finite number of regime
switching times occur over the observation sample. One clearly sees that the
long memory over each stationary segment can be estimated at the same rate as
in the stationary context. As we will see, the picture is more complicated in a
locally stationary context but it can still be handled.  More precisely, we
will show that looking at low frequencies is allowed in a locally stationary
model but with an additional cost on the rate of convergence depending on how
small the frequency used for the analysis is. We believe that such theoretical
results are crucial for the practical estimation of the time varying long
memory parameter as they demonstrate the viability of such an approach while
indicating that it should be applied with care.

Summarizing our results, the rest of the paper is organized as follows. In
Section~\ref{sec:2}, we give the technical details of our locally stationary
long memory model of semi-parametric type and give a series of examples of
processes falling into this model.  In Section~\ref{sec:3} we define our
estimators based on wavelet analysis for which we briefly recall the wavelet
set-up.
We define the local scalogram which is at the heart of our wavelet based
estimators. We also prepare our technique of stationary approximation by
defining what we call the approximating stationary \emph{tangent process} and
its wavelet spectrum, the \emph{local wavelet spectrum},
as well as the pseudo-estimator \emph{tangent scalogram}. We finish this
section by discussing a series of smoothing weights, one- and two-sided
kernels, which fulfill our given assumptions.  The asymptotic properties of our
proposed estimators are stated in the following Section~\ref{sec:4}.
We derive a mean-square approximation of the local scalogram through the
tangent scalogram (Proposition~\ref{prop:local-scalogram}), followed by a
control of the mean square error of the scalogram as an estimator of the local
wavelet spectrum (Theorem~\ref{theo:rate-param-scalo}) and a CLT for the
tangent scalogram (Theorem~\ref{thm:clt-tang-scal}), which finally allows us to
derive a CLT for the local scalogram (Corollary~\ref{cor:clt-local-scalogram}).
The results on the asymptotic behavior of the estimator of $d(u)$ are then
obtained: Corollary~\ref{cor:rate-param-est} provides the rate of convergence
and Theorem~\ref{theo:clt-param-est} the asymptotic normality.
We pursue the paper by Section~\ref{sec:5} on numerical examples, first
simulating some ARFIMA process with a time-varying $d$ and comparing the
performance of the two-sided (rectangular) kernel with the recursive weight
scheme. Second, we apply the kernel estimator to a series of realized log
volatilities (see also \cite{bauwens:wang:2008}), namely of the exchange rate
of the YEN versus USD, from June 1986 to September 2004. We conclude in
Section~\ref{sec:6} before an appendix section presents all technical details
of our derivations including our proofs.

\section{Model set-up and examples}\label{sec:2}
Define the difference operator $[\diffop X]_k = X_k - X_{k-1}$ and $\diffop^p$ recursively.
This will allow $d(u)$ to take values up to $p+1/2$ in the following model.

We adapt the approach of~\cite{dahlhaus96} to the case where the spectral
density is allowed to have a singularity at the zero frequency.  Let us fix
$p=0,1,2,\dots$ and $A^0_{t,T}(\lambda)$ be an array of $L^2([-\pi,\pi])$
functions with real-valued Fourier coefficients.  Let $\{X_{t,T}\}$ be an 
array of real-valued random variables such that
\begin{equation}
  \label{eq:LocStatProcSpectralExt}
\diffop^p X_{t,T} = 
\int_{-\pi}^\pi A^0_{t,T}(\lambda)\ \rme^{\rmi\lambda t}\;\rmd Z(\lambda)\ , \qquad t=1,\ldots T, \ T \geq 1\ ,
\end{equation}
where $\rmd Z(\lambda)$ is the spectral representation of a centered weak white
noise with unit variance,
\begin{equation}
  \label{eq:epsZ}
  \varepsilon_t= \int_{-\pi}^\pi\rme^{\rmi\lambda t}\;\rmd Z(\lambda)
,\quad t\in\Z \;,
\end{equation}
hence  $Z(\lambda)$ is a Hermitian complex valued
process with weakly stationary orthogonal increments on $[-\pi,\pi]$.
We further assume that there exist a function $A(u,\lambda)$ in
$L^2([0,1]\times[-\pi,\pi])$ and two constants $c>0$ and $D<1/2$ such that
\begin{equation}
  \label{eq:AssumAtimeSmooth2}
\left|A^0_{t,T}(\lambda)-A(t/T,\lambda)\right|\leq c\; T^{-1} \; |\lambda|^{-D}\,,\quad 1\leq t\leq T,\,-\pi\leq\lambda\leq\pi \;,
\end{equation}
and
\begin{equation}
  \label{eq:AssumAtimeSmooth}
\left|A(u;\lambda)-A(v,\lambda)\right|\leq c\; |v-u| \; |\lambda|^{-D}\,,\quad 0\leq u,v\leq1,\,-\pi\leq\lambda\leq\pi\;.  
\end{equation}
These correspond to the definition of locally stationary processes introduced
in~\cite{dahlhaus96} but with the term $|\lambda|^{-D}$ added in the upper
bound to allow a singularity at the zero frequency.  
Relations~(\ref{eq:LocStatProcSpectralExt}), (\ref{eq:AssumAtimeSmooth2})
and~(\ref{eq:AssumAtimeSmooth}) give rise to the following time-varying
\emph{generalized} spectral density of $\{X_{t,T}\}$
\begin{equation}
  \label{eq:genLocSpecDens}
f(u,\lambda) = |1- \rme^{-\rmi\lambda}|^{-2p}\;\left|A(u;\lambda)\right|^2 \; .  
\end{equation}
The first multiplicative factor in the right-hand side
of~(\ref{eq:genLocSpecDens}) corresponds to the operator $\diffop^p$ in the
left-hand side of~(\ref{eq:LocStatProcSpectralExt}). We now focus on
time-varying generalized spectral densities exhibiting a \emph{memory parameter}
at zero frequency.
\begin{defi}
  We say that the process $\{X_{t,T},\;t=1, \ldots, T, \ T \geq 1\}$ has
  \emph{local memory parameter} $d(u)\in(-\infty,p+1/2)$ at time $u\in [0,1]$
  if it
  satisfies~(\ref{eq:LocStatProcSpectralExt}),~(\ref{eq:AssumAtimeSmooth2})
  and~(\ref{eq:AssumAtimeSmooth}) and its generalized spectral density
  $f(u,\lambda)$ defined by~(\ref{eq:genLocSpecDens}) satisfies the following
  semi-parametric type condition:
\begin{equation}
  \label{eq:genLocSpecDensSemiParam}
f(u,\lambda) = |1- \rme^{-\rmi\lambda}|^{-2d(u)}\ f^*(u,\lambda)\ ,
\end{equation}
with $f^*(u,0)>0$ and
\begin{equation}
  \label{eq:genLocSpecDensSemiParamCond}
 |f^*(u,\lambda)-f^*(u,0)|\leq C\;f^*(u,0)\;|\lambda|^\beta,\;\lambda\in[-\pi,\pi] \;,
\end{equation}
where $C>0$ and $\beta\in(0,2]$.
\end{defi}
The assumption on the model is summarized hereafter.
\begin{assumption}
  \label{ass:model_lin}
  The array $\{X_{t,T}\}$ of real-valued random variables has local memory
  parameter $d(u)\in(-\infty,p+1/2)$ at time $u\in [0,1]$. Moreover
  $\{\varepsilon_t\}$ in~(\ref{eq:epsZ}) is a weak white noise such that
  $\E[\varepsilon_0]=0$, $\mathrm{Var}(\varepsilon_0)=1$, $\E[\varepsilon_t^4]$
  is finite for all $t\in\Z$ and the fourth-order cumulants of its spectral
  representation $\rmd Z(\lambda)$ satisfy
  \begin{equation}
    \label{eq:cum-spectral}
    \mathrm{Cum}\left(\rmd
      Z(\lambda_k),\,1\leq
      k\leq4\right)=\hat{\kappa}_4(\lambda)\;\rmd\mu(\lambda),
    \quad\lambda=(\lambda_k)_{1\leq k\leq 4}\in[-\pi,\pi]^4\;,
  \end{equation}
  where $\hat{\kappa}_4(\lambda)=\hat{\kappa}_4(\lambda_1,\lambda_2,\lambda_3)$
  is a bounded function defined on $[-\pi,\pi]^3$, and $\mu$ is defined as the
  measure on $[-\pi,\pi]^4$ such that, for any $(2\pi)$-periodic functions
  $A_k$, $1\leq k\leq 4$,
  \begin{equation}
    \label{eq:muDef}
    \int_{[-\pi,\pi]^4} \prod_{k=1}^4A_k(\lambda_k)\;\rmd\mu(\lambda)=
\int_{[-\pi,\pi]^3}A_4(-\lambda_1-\lambda_2-\lambda_3)\,
\prod_{k=1}^3A_k(\lambda_k)\;\rmd\lambda\;.
  \end{equation}
\end{assumption}

Assumption~(\ref{eq:cum-spectral}) is standard for linear representations of
time series and was used by Dahlhaus (for cumulants of all orders) in the
original definition of locally stationary processes in \cite{dahlhaus:1997}.
The measure $\mu$ defined by~(\ref{eq:muDef}) is sometimes denoted as
$\rmd\mu(\lambda)=\eta(\lambda_1+\dots+\lambda_4)\rmd\lambda_1\dots\rmd\lambda_4$,
where $\eta$ is the $(2\pi)$-periodic Dirac comb, see \textit{e.g.}
\cite{dahlhaus:1997,dahlhaus-neumann-2001}.
An immediate consequence of~(\ref{eq:cum-spectral}) is the following bound of
fourth-order cumulants, for any set of $(2\pi)$-periodic functions $A_k$, $1\leq k\leq
4$, 
\begin{equation}
  \label{eq:cumBound}
  \left|\mathrm{Cum}\left(\int_{-\pi}^\pi A_k(\lambda)\rmd Z(\lambda),\,1\leq k\leq4\right)\right|
  \leq c_4 \;\|A_1\|_2\,\|A_4\|_2\,\|A_2\|_1\|A_3\|_1\;,
\end{equation}
where $c_4$ is a positive constant and
$\|A_k\|_p=(\int_{-\pi}^\pi|A_k(\lambda)|^p\rmd\lambda)^{1/p}$.

\vspace{0.5cm}

\noindent We now give a small series of examples, adapted
from~\cite{moulines:roueff:taqqu:2007:jtsa} to the time varying setting.
\begin{example}[tvFBM($H$)]
  The \emph{Fractional Brownian motion} (FBM) process $\{ B_H(k) \}_{k \in \Z}$
  with Hurst index $H\in(0,1)$ is a discrete-time version of $\{ B_H(t), t \in
  \R \}$, a Gaussian process with mean zero and covariance
\[
\E[ B_H(t) B_H(s)] = \frac{1}{2} \left\{ |t|^{2H} + |s|^{2H} - |t-s|^{2H} \right\} \; .
\]
The spectral density of $\{ \diffop B_H(k) \}_{k \in \Z}$ is then given by
$\lambda\mapsto |1-\rme^{-\rmi\lambda}|^{-2H+1}f_{\mathrm{FBM}}(\lambda;H)$, where
\begin{equation}
\label{eq:smooth:part:FBM}
f_{\mathrm{FBM}}(\lambda;H)=\left|\frac{2 \sin(\lambda/2)}{\lambda}\right|^{2H+1}+
\left|2\sin(\lambda/2)\right|^{2H+1}\sum_{k\neq0}\left|\lambda+2k\pi\right|^{-2H-1}\;.
\end{equation}
The \emph{time varying Fractional Brownian motion} (tvFBM) 
has generalized spectral density~(\ref{eq:genLocSpecDensSemiParam}) with $p=1$, $d(u)=H(u)+1/2\in(1/2,3/2)$
and $f^\ast(u,\lambda)=f_{\mathrm{FBM}}(\lambda;H(u))$,
where $H$ is a Lipschitz mapping of $[0,1]$ into a subset of $(0,1)$. Then~(\ref{eq:genLocSpecDensSemiParamCond}) 
holds with $\beta=(2H(u)+1)\wedge2$. The corresponding non-negative local transfer function is
\begin{equation}
  \label{eq:tvFBMkernel}
A(u,\lambda)=|1- \rme^{-\rmi\lambda}|^{1/2-H(u)}\sqrt{f_{\mathrm{FBM}}(\lambda;H(u))}\;.  
\end{equation}
In this case, by Lemma~\ref{lem:lipshitz_with_log}
in~\ref{sec:technical-lemmas},~(\ref{eq:AssumAtimeSmooth}) holds for any
$D>\sup_u H(u)-1/2$.
\end{example}


\begin{example}[tvFGN($H$)]
The \emph{time varying fractional Gaussian noise} (tvFGN) is defined similarly as the tvFBM by
$f^\ast(u,\lambda)=f_{\mathrm{FBM}}(\lambda;H(u))$ but with $p=0$ and $d(u)=H(u)-1/2\in(-1/2,1/2)$.
\end{example}

\begin{example}[approximated causal tvFBM($H$)] The drawback of the tvFBM (and
  also of tvFGN) defined above is the non-causality of the transfer function
  $A(u,\cdot)$ defined in~(\ref{eq:tvFBMkernel}). Since $\{ \diffop B_H(k)
  \}_{k \in \Z}$ is purely non-deterministic, it admits a causal representation.
  On the other hand, to our knowledge, the corresponding transfer function is
  not explicitly given and thus~(\ref{eq:AssumAtimeSmooth}) is difficult to
  check. To circumvent this problem, one may construct an example by
  approximating 
  a causal continuous time representation of the FBM, see \textit{e.g.}
  \cite[Chapter 6]{samo:2006}. Let us fix $H$ in $(1/2,1)$. 
Replacing the integral by a discrete sum in this
  representation, one obtains the following process
$$
\tilde{B}_H(t)=\sum_{s\in\Z}\{(t-s)_+^{H-1/2}-(-s)_+^{H-1/2}\}\,\varepsilon_s,\quad t\in\Z\;,
$$
where $\{\varepsilon_s\}_{s\in\Z}$ is a standard Gaussian white noise. Then
$$
\diffop \tilde{B}_H(t) =
\sum_{k\geq0}\{k^{H-1/2}-(k-1)_+^{H-1/2}\}\,\varepsilon_{t-k},\quad t\in\Z\;,
$$
is a causal representation of a stationary process. Using an integral
approximation of the discrete Fourier transform of the sequence 
$(k_+^{H-1/2}-(k-1)_+^{H-1/2})_{k\in\Z}$, one can show that,  as $\lambda\to0$,
the corresponding transfer function $A_H(\lambda)$ satisfies
$|A_H(\lambda)|=C_H\,|\lambda|^{1/2-H}+O(1)$ for some positive
constant $C_H$. Moreover, for any $\epsilon>0$, there is a constant $C$ such
that, for all $1/2<H'\leq H<1$ and $\lambda\in(-\pi,\pi)$,
$$
|A_H(\lambda)-A_{H'}(\lambda)|\leq C\,|H-H'|\,|\lambda|^{H-1/2-\epsilon}\;.
$$
Let now $H$ be a Lipschitz mapping of $[0,1]$ into a subset of $(1/2,1)$ and
define the approximated causal tvFBM($H$) process by setting
$A(u,\lambda)=A_{H(u)}(\lambda)$. Then Condition~(\ref{eq:AssumAtimeSmooth})
holds again for any $D>\sup_u H(u)-1/2$.
\end{example}



\begin{example}[tvARFIMA$(q,d,r)$]
  The \emph{time varying autoregressive fractionally integrated moving average}
  (tvARFIMA$(q,d,r)$) process is defined by
\begin{equation}
  \label{eq:arfimaLocSpecDens}
A(u;\lambda)=(1-\rme^{-\rmi\lambda})^{-d(u)+p}\;
\frac{\sigma(u)}{\sqrt{2\pi}}\;
\frac{1+\sum_{k=1}^r\theta_k(u) \rme^{-\rmi\lambda}}
{1 - \sum_{k=1}^q \phi_k(u) \rme^{-\rmi\lambda}}\;,
\end{equation}
where $d:[0,1]\to(-\infty,p+1/2)$, $\sigma:[0,1]\to\R_+$,
$\phi=[\phi_1\,\,\dots\,\,\phi_q]^T:[0,1]\to\R^q$ and
$\theta=[\theta_1\,\,\dots\,\,\theta_r]^T:[0,1]\to\R^r$ are Lipschitz functions
such that $ 1 - \sum_{k=1}^q \phi_k(u) z^k$ does not vanish for all $u\in[0,1]$
and $z\in\C$ such that $|z|\leq1$. Using this latter condition, the local
transfer function $A(u;\cdot)$ defines a causal \emph{autoregressive
  fractionally integrated moving average} (ARFIMA$(q,d(u)-p,r)$) process and
the local generalized spectral density~(\ref{eq:genLocSpecDens}) satisfies the
conditions~(\ref{eq:genLocSpecDensSemiParam})
and~(\ref{eq:genLocSpecDensSemiParamCond}) with $\beta=2$.  Using
Lemma~\ref{lem:lipshitz_with_log} in~\ref{sec:technical-lemmas}, the
Lipschtiz assumptions on $d$, $\sigma$, $\theta$ and $\phi$ yield the
condition~(\ref{eq:AssumAtimeSmooth}) with $D>\sup_u d(u)-p$.
\end{example}

In order to verify Condition~(\ref{eq:AssumAtimeSmooth2}) trivially, the
simplest definition of $\{\diffop^pX_{t,T}\}$ in all the previous examples is
to take $A^0_{t,T}(\lambda)=A(t/T,\lambda)$, that is to set the time-varying
linear representation
\begin{equation}
  \label{eq:LocStatProcSpectral}
\diffop^p X_{t,T} = 
\int_{-\pi}^\pi A(t/T;\lambda)\ \rme^{\rmi\lambda t}\;\rmd Z(\lambda)\;,
\end{equation}
as will be done for our simulated tvARFIMA in Section~5.  However, one might
also want to use a different transfer function $A^0_{t,T}$
in~(\ref{eq:LocStatProcSpectralExt}), provided that
Condition~(\ref{eq:AssumAtimeSmooth2}) holds. Such approximated time varying
linear representation is motivated by the tvAR($p$) process, which satisfies
the recursion
$$
X_{t,T}-\sum_{k=1}^p\phi_k(t/T)X_{t-k,T}=\sigma(t/T)\;\varepsilon_t, \quad 1\leq t\leq T \;,
$$
along with appropriate initial conditions. It has been shown
in~\cite{dahlhaus96} that such non-stationary process does not satisfy a
representation of the form~(\ref{eq:LocStatProcSpectral}) (with $p=0$) but it
does satisfy~(\ref{eq:LocStatProcSpectralExt}) and~(\ref{eq:AssumAtimeSmooth2})
(with $p=D=0$).

\section{Estimation method based on wavelet analysis}\label{sec:3}
\subsection{Discrete wavelet transform (DWT)}
Following the approach presented in \cite{moulines:roueff:taqqu:2007:jtsa} for the estimation of the memory parameter of 
a stationary sequence, we 
compute the discrete wavelet transform (DWT) of $\{X_{t,T},\;1\leq t \leq T\}$ (in discrete time) for a given scale function
$\phi$ and wavelet $\psi$. We denote by $\{W_{j,k;T}\,;j\geq0,k\in\mathbb{Z}\}$ the wavelet coefficients of the
process $\{X_{t,T},\,1\leq t\leq T\}$,
\begin{equation}
  \label{eq:wavcoeffLocStat}
  W_{j,k;T}= \sum_{t=1}^T h_{j,2^jk-t}X_{t,T},\quad k=0,\dots,T_j-1\;,
\end{equation}
where $\{h_{j,t},\;,t\in\Z\}$ denotes the wavelet detail filter at scale $j$ associated to $\phi$ and $\psi$ through the
relation
$$
h_{j,t}=2^{-j/2}\int_{-\infty}^\infty\phi(u+t)\psi(2^{-j}u)\;\rmd u\;, 
$$ 
and $T_j$ the number of available wavelet coefficients at scale $j$, which satisfies 
\begin{equation}
  \label{eq:Tjasymp}
T2^{-j} - c \leq T_j\leq T2^{-j},\quad\text{for some constant $c$ independent of $j\geq 0$}\;.
\end{equation}
The filter $h_{j,\cdot}$ and $T_j$ are
defined so that the support $\{t~:~h_{j,2^jk-t}\neq0\}$ is included in $\{1,\dots T\}$ for $k=0,\dots,T_j-1$.
Observe that here $j$ denotes the scale index of the wavelet coefficient and $k$ its position index. 
We use the convention that a large $j$ corresponds to a coarse scale. Let us define
\begin{equation}
  \label{eq:HjDef}
H_j(\lambda)=\sum_{t\in\Z} h_{j,t}\rme^{-\rmi t\lambda}  
\end{equation}
the corresponding filter transfer function. The following conditions on the wavelet $\psi$ and scale function $\phi$ are
assumed to hold for a positive integer $M$ and a real $\alpha$.
\begin{enum_W}
\item\label{item:Wreg} $\phi$ and $\psi$ are compactly-supported, integrable, $\int_{-\infty}^\infty
  \phi(t)\,\rmd t = 1$ and $\int_{-\infty}^\infty \psi^2(t)\,\rmd t = 1$. 
\item\label{item:psiHat} There exists $\alpha>1$ such that
$\sup_{\xi\in\R}|\hat{\psi}(\xi)|\,(1+|\xi|)^{\alpha} <\infty$, where
$\hat{\psi}(\xi)=\int_{-\infty}^\infty\psi(t)\,\rme^{-\rmi\xi t}\rmd t$ denotes the Fourier transform of $\psi$.
\item\label{item:MVM} The function $\psi$ has  $M$ vanishing moments, $ \int_{-\infty}^\infty t^m \psi(t) \,\rmd t=0$ for all $m=0,\dots,M-1$
\item\label{item:MIM} The function $ \sum_{k\in\Z} k^m\phi(\cdot-k)$
is a polynomial of degree $m$ for all $m=0,\dots,M-1$.
\end{enum_W}
Under~\ref{item:MVM} and~\ref{item:MIM}, the filter can be interpreted as the
convolution of the $\diffop^M$ filter with a finite impulse response
filter. Hence if $M\geq p$, Equation~(\ref{eq:wavcoeffLocStat}) may be written
as
$$
  W_{j,k;T}= \sum_{t=1}^T \tilde{h}_{j,2^jk-t}(\diffop^p X)_{t,T},\quad k=0,\dots,T_j-1\;,
$$
where $h_{j,\cdot}=\tilde{h}_{j,\cdot}\ast\diffop^p$. In particular, we have
\begin{equation}
  \label{eq:HjTilde}
  \tilde{H}_j(\lambda)=\sum_{t\in\Z} \tilde{h}_{j,t}\rme^{-\rmi t\lambda} =
H_j(\lambda)(1-\rme^{\rmi\lambda})^{-p}\;.
\end{equation}

\subsection{Local wavelet spectrum, local scalogram, tangent scalogram, and final estimator}

Recall that $f(u,\cdot)$ in~(\ref{eq:genLocSpecDens}) can be interpreted as a
\emph{local generalized spectral density} at rescaled time $u\in[0,1]$. Hence,
as in the stationary setting used in \cite{moulines:roueff:taqqu:2007:jtsa},
for each such $u$, we may define a \emph{local wavelet spectrum}
$\sigma^2(u)=\{\sigma^2_j(u),\,j\geq0\}$, where for each $j\geq0$,
$\sigma^2_j(u)$ is the variance of the wavelet coefficients at scale index $j$
of a process with generalized spectral density $f(u,\cdot)$. This variance is
well defined under the assumption $M\geq p$ because in this case the wavelet
coefficients at given scale are weakly stationary. Moreover,
by~(\ref{eq:genLocSpecDens}) and~(\ref{eq:HjTilde}),
\begin{equation*}
\sigma^2_j(u)  = \int_{-\pi}^\pi \left|{H}_j(\lambda)\right|^2\,f(u;\lambda)\;\rmd \lambda 
= \int_{-\pi}^\pi \left|\tilde{H}_j(\lambda)\ A(u;\lambda)\right|^2\;\rmd \lambda \;.
\end{equation*}
The following intuitive definition will be also useful when developing our
estimation theory using stationary approximations.  For any $u\in [0,1]$ one
may define a \emph{tangent} stationary process for the $p$-th increment
\begin{equation}
  \label{eq:tangentProcSpectral}
\diffop^pX_{t}(u) = \int_{-\pi}^\pi A(u;\lambda)\ \rme^{\rmi\lambda t}\;\rmd Z(\lambda)\ , 
\end{equation}
whose spectral density is $|1- \rme^{-\rmi\lambda}|^{2p}f(u,\lambda)$. 
Further we define the wavelet coefficients of the tangent weakly stationary
process at any $u\in [0,1]$, namely,
\begin{align}
  \label{eq:wavcoeffStat}
  W_{j,k}(u) &= \sum_{t=1}^T \tilde{h}_{j,2^jk-t}(\diffop^p X)_{t}(u)\\
  \label{eq:wavcoeffStatSpectral}
&=\int_{-\pi}^\pi \tilde{H}_j(\lambda)\ A(u;\lambda)\ \rme^{\rmi\lambda 2^jk}\;\rmd Z(\lambda)  \;,
\quad k=0,\dots,T_j-1\;.
\end{align}
 these wavelet coefficients are indeed those of a process with generalized
spectral density $f(u,\cdot)$. Thus the above definition gives
\begin{equation}
  \label{eq:defLocWavSpec}
\sigma^2_j(u)= \E\left[W_{j,k}^2(u)\right]  \;.
\end{equation}

\vskip0.3cm

An important tool for the estimation of the long memory is the \emph{scalogram} (first introduced in this context
by~\cite{wornell:oppenheim:1992} and developed by~\cite{abry:veitch:1998}) defined as
$$
\widehat{\sigma}^2_{j}=T_j^{-1}\sum_{k=0}^{T_j-1}W_{j,k}^2 \; .
$$
Here to cope with local stationarity, we will need a \emph{local scalogram} for estimating the local wavelet spectrum,
namely, for a given $u\in [0,1]$,
\begin{equation}
  \label{eq:EstimatorWeights}
\widehat{\sigma}^2_{j,T}(u)= \sum_{k=0}^{T_j-1}\gamma_{j,T}(k)W_{j,k;T}^2 \; ,
\end{equation}
where $\{\gamma_{j,T}(k)\}$ are some non-negative weights localized at indices $k\approx uT_j$ and normalized in 
such a way that   
\begin{equation}
    \label{eq:normWeights}
\sum_{k=0}^{T_j-1}\gamma_{j,T}(k) =1 \; .
\end{equation}
The localization property will be expressed by imposing a bound on the increase rate of the following quantity (see
equation~(\ref{eq:NormalizedWeightsSums})) 
\begin{equation}
  \label{eq:GammaDefQ}
\Gamma_q(u;j,T)=\sum_{k=0}^{T_j-1}|\gamma_{j,T}(k)|\;\;|k-Tu2^{-j}|^q  \;,
\end{equation}
as $T\to\infty$ for appropriate values of the exponent $q$.

An important tool for studying the local scalogram is the \emph{tangent scalogram} defined as
\begin{equation}
  \label{eq:tangentScalogram}
\widetilde{\sigma}^2_{j,T}(u)=\sum_{k=0}^{T_j-1}\gamma_{j,T}(k)W_{j,k}^2(u)\;.  
\end{equation}
We note that this definition is similar to that of the local
scalogram in~(\ref{eq:EstimatorWeights}) but with the wavelet coefficients $W_{j,k;T}$ replaced by the tangent wavelet
coefficients $W_{j,k}^2(u)$ defined in~(\ref{eq:wavcoeffStat}).
The tangent scalogram is not an estimator since it cannot be computed from the observations $X_{1,T},\dots,X_{T,T}$.
However, it provides useful approximations of the local scalogram. 

We conclude this section by deriving an {\em estimator of the time-varying long memory parameter}.
The local wavelet spectrum is related to the local memory parameter $d(u)$ by the asymptotic property $\sigma^2_j(u)\sim c
2^{2d(u)j}$ as $j\to\infty$. This property will be made more precise when we study the bias, see the
relation~(\ref{eq:BiasControl}) below. 
An estimator of $d(u)$ is obtained by a linear regression of  $(\log \widehat{\sigma}^2_{j,T}(u))_{j=L,\dots,L+\ell}$ with
respect to $j=L,\dots,L+\ell$, where $\ell$ is the number of scales used in the regression and $L$ is the lowest
scale index used in the regression. Let $\bw$ be a vector $\bw =[w_0,\dots,w_{\ell}]^T$  satisfying 
\begin{equation}
\label{eq:propertyw}
\sum_{i=0}^{\ell} w_{i}  = 0\quad\text{and}\quad 2 \log(2) \sum_{i=0}^{\ell} i w_{i}  = 1 \;.
\end{equation}
The local estimator of $d(u)$ is defined as 
\begin{equation}
\label{eq:definition:estimator:regression}
\hat{d}_T(L) = \sum_{j=L}^{L+\ell} w_{j-L} \log \left( \widehat{\sigma}^2_{j,T}(u)\right) \; .
\end{equation}

\subsection{Conditions on the weights $\gamma_{j,T}(k)$ and examples}
\label{sec:exampl-weights}

Let us now precise our conditions on the weights $\gamma_{j,T}(k)$.
Denote, for any $0\leq i\leq j$, $v\in\{0,\dots,2^{i}-1\}$ and $\lambda\in\R$,
\begin{equation}
  \label{eq:defweightsFourier}
\Phi_{j,T}(\lambda;i,v)=\sum_{l\in \calT_j(i,v)}\gamma_{j-i,T}(2^i l+v)\rme^{\rmi l \lambda}\;,
\end{equation}
where
\begin{equation}
  \label{eq:calT}
   \calT_j(i,v)= \left\{l \,:\, 0\leq l < 2^{-i}(T_{j-i}-v)\right\}\;.
\end{equation}
We moreover define
\begin{equation}\label{eq:deltaDef}
\delta_{j,T}=\sup_{k=0,\dots,T_j-1}|\gamma_{j,T}(k)| \; .
\end{equation}

The weights
$\gamma_{j,T}(k)$ must satisfy an appropriate asymptotic behavior as $T\to\infty$ for obtaining a
consistent estimator of $d(u)$. More precisely, the following assumption will be required.      

\begin{assumption}\label{assump:weights}
The index $j$ depends on $T$ so that the weights $(\gamma_{j,T}(k))_{k}$ satisfy the following asymptotic properties as
$T\to\infty$. 
  \begin{enumerate}[(i)]
  \item\label{item:delatAssump} We have $\delta_{j,T}\to0$, and for any fixed integer $i$,
    $\delta_{j+i,T}\sim2^{i}\delta_{j,T}$. 
  \item\label{item:Vassump} For all $i,i'\geq0$, $v\in\{0,\dots,2^{i}-1\}$ and $v'\in\{0,\dots,2^{i'}-1\}$, there exists a constant
    $V=V(i,v;i',v')$ such that
\begin{equation}\label{eq:VasympDef}
\delta_{j,T}^{-1}\int_{-\pi}^\pi \Phi_{j,T}(\lambda;i,v)\overline{\Phi_{j,T}(\lambda;i',v')}\,\rmd\lambda \to V(i,v;i',v')\;.
\end{equation}
\item\label{item:neglicAssump}  For all $\eta>0$, $i\geq0$ and $v\in\{0,\dots,2^{i}-1\}$, we have
$$
\delta_{j,T}^{-1/2} \sup_{\eta\leq|\lambda|\leq\pi}\left|\Phi_{j,T}(\lambda;i,v)\right|\to 0 \; .
$$
\item\label{item:Locassump} For $q=0,1,2$, we have
\begin{equation}
  \label{eq:NormalizedWeightsSums}
  \Gamma_q(u;j,T)=O\left((\delta_{j,T})^{-q}\right)\;,
\end{equation}
where $\Gamma_q(u;j,T)$ is defined in~(\ref{eq:GammaDefQ}).
  \end{enumerate} 
\end{assumption}

We provide several examples of weights satisfying this assumption below.
In these examples, the weights $\gamma_{j,T}(k)$, $k=0,\ldots,T_j$, are entirely determined by $T_j$ and a bandwidth
parameter $\band_T$ and
\begin{equation}
  \label{eq:bandwidthDef}
\delta_{j,T}^{-1}\asymp \band_T T_j \sim \band_T T2^{-j} \;.  
\end{equation}
In kernel estimation, one may interpret the bandwidth parameter $\band_T$ as 
the proportion of wavelet coefficients used for the estimation of the local scalogram
$\widehat{\sigma}^2_{j,T}(u)$ at given scale  $j$ and position $u$, among the $T_j$  wavelet coefficients available at scale
$j$ from $T$ observations $X_{1,T},\dots,X_{T,T}$.   
Lemmas~\ref{lem:two-sided-kernel} and~\ref{lem:recursive-weights} show that, for these examples, 
Assumption~\ref{assump:weights} is satisfied as soon as $T_j\to\infty$ and $\band_T T_j\to0$, 
except in the non-compactly supported case (K-\ref{item:exp}) in  Lemma~\ref{lem:two-sided-kernel} where we assume 
in addition that  $T_j\exp(-c' \band_{T}T_j)=O(1)$ for any $c'>0$, which holds in the typical asymptotic setting  
$\band_T \asymp T_j^{-\zeta}$ with $\zeta\in(0,1)$. 

\begin{example}[Two-sided kernel weights]
\label{ex:two-sided-kernel}
For  $u\in(0,1)$, one has a number of observations before rescaled time $u$ and
after rescaled time $u$ both tending to infinity.   
Thus we may use a two-sided kernel to localize the memory parameter estimator around $u$.  
For a given bandwidth $b=b_T$, the corresponding weights are given by 
\begin{equation}
  \label{eq:KernelWeights}
\gamma_{j,T}(k)=\rho_{j,T}^{-1}\;K( (uT_j-k) / (\band_{T}T_j) )\; ,
\end{equation}
where $K$ is a non-negative symmetric function and $\rho_{j,T}$
is a normalizing term so that~(\ref{eq:normWeights}) holds. In the last display we see that $\band_T$ is the bandwidth 
in a rescaled time sense while $\band_{T}T_j$ is the corresponding  bandwidth in the sense of location indices
$k=0,1,2,\dots,T_j$ at scale $j$ .
Lemma~\ref{lem:two-sided-kernel} in the appendix shows that Assumption~\ref{assump:weights} holds
for a wide variety of choices for $K$. In particular
Assumption~\ref{assump:weights} holds  with
$\delta_{j,T}\asymp(b_TT_j)^{-1}$ and $V(i,v,;i',v')=2\pi2^{-i-i'}$ under the
following assumption. 
\begin{assumption}\label{assump:rect_weights}
The weights $(\gamma_{j,T}(k))$ are defined by~(\ref{eq:KernelWeights}) with
$K=\1_{[-1/2,1/2]}$. Moreover, as $T\to\infty$, $b_T\to0$ and $T_jb_T\to\infty$.
\end{assumption}
\end{example}

\begin{example}[Recursive weights]
\label{ex:recursive-kernel}
By \emph{recursive weights}, we here mean that, given $T,\;L$ and $\bw$,  the possibility of computing
$\widehat{\sigma}^2_{j,T}(u)$ by successive simple linear processing involving a finite number of operations 
after each new observations $X_{t;T}$ as $t$ grows from $t=1$ to $t=T$.

Because the DWT is defined as a succession of finite filtering and decimation, it is indeed possible to compute $W_{j,k;T}$
online for all $j\in\{L,\dots,L+\ell\}$ and $k\in\{0, \dots,T_j\}$. Then an online implementation of the local \emph{recursive} scalogram can be done by setting  
$$
\widehat{\sigma}^2_{j,-1;T}=0\;, \quad j\in\{L,\dots,L+\ell\},
$$ 
and, using the following recursive equation for all $j\in\{L,\dots,L+\ell\}$ and $t\in\{0,\dots,T_j-1\}$,   
$$
\widehat{\sigma}^2_{j,t;T}=\exp(-(\band_{T}T_j)^{-1}) \; \widehat{\sigma}^2_{j,t-1;T} + W_{j,t;T}^2\;,
$$
where $(\band_{T}T_j)^{-1}$ is the exponential \emph{forgetting exponent} corresponding to  
the \emph{bandwidth parameter} $\band_{T}$. For any $u\in (0,1]$, we define a local recursive scalogram by  
$$
\widehat{\sigma}^2_{j;T}(u)=\rho_{j,T}^{-1} \widehat{\sigma}^2_{j,[u\,T_j]-1;T}\; ,
$$
where $[a]$ denotes the integer part of $a$ and 
\begin{equation}
\label{eq:rec_rho}
\rho_{j,T}=\sum_{k=0}^{[u\,T_j]-1} \rme^{-k/(\band_{T}T_j)}
= \frac{1-\rme^{-[u\,T_j]/(\band_{T}T_j)}}{1-\rme^{-(\band_{T}T_j)^{-1}}} \;.
\end{equation}
Hence~(\ref{eq:EstimatorWeights}) and~(\ref{eq:normWeights}) hold with   
\begin{equation}
  \label{eq:recWeights}
  \gamma_{j,T}(k)= \rho_{j,T}^{-1} \rme^{-([u\,T_j]-1-k)/(\band_{T}T_j)} \1_{[0,uT_j-1)}(k) \;. 
\end{equation}
Observe that these weights are one-sided by construction, since only the observations before rescaled time $u$ are used for estimating
$d(u)$. This is the reason why we may take $u\in (0,1]$.  
Lemma~\ref{lem:recursive-weights} shows that Assumption~\ref{assump:weights} holds for these weights, provided that
$\band_{T}\to0$ and $T_j\band_{T}\to\infty$. 
\end{example}

\section{Asymptotic properties}\label{sec:4}

We study the asymptotic properties of $\widehat{d}_T(L)$ defined by~(\ref{eq:definition:estimator:regression}) as 
$L,T\to\infty$ in such a way that Assumption~\ref{assump:weights} holds for each $j=L,L+1,\dots,L+\ell$ and for the chosen
weights $\gamma_{j,T}(k)$. We provide further conditions on $L,T,\delta_{L,T}$ under which consistency holds and derive
the corresponding rate of convergence (Corollary~\ref{cor:rate-param-est}). Under strengthened conditions, we further show that $\widehat{d}_T(L)$ is
asymptotically normal (Theorem~\ref{theo:clt-param-est}). These results essentially follow from asymptotic results on the tangent
scalogram (Theorem~\ref{thm:clt-tang-scal}, Relations~(\ref{eq:BiasControl}) and~(\ref{eq:VarTildeBound})) and approximation
results on the local scalogram (Proposition~\ref{prop:local-scalogram}) based on the tangent scalogram.

\subsection{Asymptotic properties of the local scalogram}
In order to derive asymptotic results for $\widehat{\sigma}^2_{j,T}(u)$, we
first establish a bound on the error made when approximating
$\widehat{\sigma}^2_{j,T}(u)$ by $\widetilde{\sigma}^2_{j,T}(u)$.

\begin{prop}\label{prop:local-scalogram}
  Let $u\in[0,1]$ and consider a model satisfying
  Assumption~\ref{ass:model_lin}. Assume~\ref{item:Wreg}--\ref{item:MIM} hold with $M\geq
  p\vee (d(u)-1/2)$ and $\alpha>1/2-d(u)$.  Suppose moreover that
  Assumption~\ref{assump:weights}(\ref{item:Locassump}) hold.  Then, the
  following approximation holds.
\begin{equation}
  \label{eq:ScalApproxResult}
    \E\left[\left(\widehat{\sigma}^2_{j,T}(u)-\widetilde{\sigma}^2_{j,T}(u)\right)^2\right]=
 O\left(2^{(6+4p)j}T^{-4} \delta_{j,T}^{-4} + 2^{(3+2p+2d(u))j}T^{-2} \delta_{j,T}^{-2}\right) \; .
\end{equation}
\end{prop}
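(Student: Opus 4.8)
The plan is to write the difference of the two scalograms as a sum over locations $k$ of $\gamma_{j,T}(k)$ times the difference of squared wavelet coefficients $W_{j,k;T}^2 - W_{j,k}^2(u)$, and to control the $L^2$ norm of this sum by splitting each squared difference into a cross term and a quadratic term. Concretely, with $D_{j,k} := W_{j,k;T} - W_{j,k}(u)$ one has $W_{j,k;T}^2 - W_{j,k}^2(u) = 2 W_{j,k}(u) D_{j,k} + D_{j,k}^2$, so that
\begin{equation*}
\widehat{\sigma}^2_{j,T}(u) - \widetilde{\sigma}^2_{j,T}(u) = 2\sum_{k=0}^{T_j-1}\gamma_{j,T}(k) W_{j,k}(u) D_{j,k} + \sum_{k=0}^{T_j-1}\gamma_{j,T}(k) D_{j,k}^2 \;.
\end{equation*}
By the $C_2$-inequality it suffices to bound the $L^2$ norm of each of the two sums separately. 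The first step is therefore to obtain a pointwise bound on the error $D_{j,k}$ in the spectral domain: using~(\ref{eq:wavcoeffLocStat}),~(\ref{eq:HjTilde}),~(\ref{eq:wavcoeffStat}) and~(\ref{eq:wavcoeffStatSpectral}), $D_{j,k}$ is a stochastic integral against $\rmd Z$ whose transfer function involves $\tilde H_j(\lambda)$ times $(A^0_{t,T}(\lambda) - A(u,\lambda))$-type terms. I would split this latter difference via the triangle inequality into $A^0_{t,T}(\lambda) - A(t/T,\lambda)$, controlled by~(\ref{eq:AssumAtimeSmooth2}) by $cT^{-1}|\lambda|^{-D}$, and $A(t/T,\lambda) - A(u,\lambda)$, controlled by~(\ref{eq:AssumAtimeSmooth}) by $c|t/T - u||\lambda|^{-D}$; the factor $|t/T-u|$ is at most $O(2^j|k - Tu2^{-j}|/T + 2^j/T)$ on the support of $h_{j,2^jk - \cdot}$, which is where the localization quantities $\Gamma_q(u;j,T)$ enter.

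The second step is to estimate $\E[D_{j,k}^2]$ and the cross-covariances $\E[W_{j,k}(u) D_{j,k}]$, and more generally the relevant fourth-order moments needed for the $L^2$ norms. Here the standard tools are: the isometry-type bound $\E|\int \tilde H_j A \,\rmd Z|^2 \le \|\tilde H_j A\|_2^2$ together with the known decay $|\tilde H_j(\lambda)| \lesssim 2^{j/2}(2^j|\lambda|)^M/(1 + 2^j|\lambda|)^{M+\alpha}$ (consequence of~\ref{item:Wreg}--\ref{item:MIM} and~\ref{item:psiHat}), which combined with the $|\lambda|^{-D}$ and $|\lambda|^{-d(u)}$ singularities and a change of variable $\xi = 2^j\lambda$ yields $\E[D_{j,k}^2] = O(2^{(1+2p)j}(2^j/T)^2 \cdot 2^{2j\cdot 0})$ up to the localization weighting — schematically $\E[D_{j,k}^2] \lesssim 2^{(3+2p)j}T^{-2}(1 + |k - Tu2^{-j}|^2)$, while $\E[W_{j,k}^2(u)] = \sigma^2_j(u) = O(2^{(1+2p+2d(u))j})$ by~(\ref{eq:defLocWavSpec}) and~(\ref{eq:BiasControl}). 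For the $L^2$ norm of the quadratic sum $\sum_k \gamma_{j,T}(k) D_{j,k}^2$ one bounds it crudely by $\sum_k \gamma_{j,T}(k)\,\|D_{j,k}\|_4^2$, and the fourth moments are handled through the cumulant bound~(\ref{eq:cumBound}) plus Gaussian-type products, giving $\|D_{j,k}\|_4^2 \lesssim \E[D_{j,k}^2]$ up to constants; summing against $\gamma_{j,T}$ and using~(\ref{eq:NormalizedWeightsSums}) for $q=0$ and $q=2$ and $\delta_{j,T}^{-1}\asymp b_T T 2^{-j}$ converts the $|k - Tu2^{-j}|^2$ weighting into $\delta_{j,T}^{-2}$, producing the term $2^{(3+2p)j}T^{-2}\delta_{j,T}^{-2}$ inside the square root, hence $2^{(6+4p)j}T^{-4}\delta_{j,T}^{-4}$ after squaring. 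The cross term $\sum_k \gamma_{j,T}(k)W_{j,k}(u)D_{j,k}$ is bounded in $L^2$ by Cauchy--Schwarz on the expectation of its square, expanding into a double sum over $k,k'$ of $\E[W_{j,k}(u)D_{j,k}W_{j,k'}(u)D_{j,k'}]$; this is controlled by $(\sum_k \gamma_{j,T}(k)(\E[W_{j,k}^2(u)])^{1/2}(\E[D_{j,k}^2])^{1/2})^2$ after again invoking~(\ref{eq:cumBound}) to dominate the fourth-order cumulant contributions, which gives $\sigma^2_j(u)\cdot 2^{(3+2p)j}T^{-2}\delta_{j,T}^{-2} \asymp 2^{(3+2p+2d(u))j}T^{-2}\delta_{j,T}^{-2}$, the second term in~(\ref{eq:ScalApproxResult}).

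The main obstacle I anticipate is the bookkeeping of the decimation and the summation over $k$ against the weights: the wavelet coefficient $W_{j,k}$ depends on $X_{t,T}$ for $t$ near $2^jk$, so correlations $\E[D_{j,k}D_{j,k'}]$ are non-negligible only for $|k - k'|$ not too large, and one must verify that the overlapping structure (encoded, for the weights, in the functions $\Phi_{j,T}(\lambda;i,v)$ of~(\ref{eq:defweightsFourier})) does not destroy the gain from $\delta_{j,T}$; in practice one passes to the frequency domain, writes the double sum $\sum_{k,k'}\gamma_{j,T}(k)\gamma_{j,T}(k')\,e^{\rmi 2^j(k-k')\lambda}$ as $|\Phi_{j,T}(\lambda;0,0)|^2$ up to edge terms, bounds $\int |\Phi_{j,T}|^2 \lesssim \delta_{j,T}^{-1}$ by Assumption~\ref{assump:weights}(\ref{item:Vassump}), and combines with the spectral bound on the transfer functions — but making this rigorous while keeping track of the $|k - Tu2^{-j}|$ factors coming from~(\ref{eq:AssumAtimeSmooth2})--(\ref{eq:AssumAtimeSmooth}) is the delicate part. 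A secondary technical point is ensuring the exponents $M \ge p \vee (d(u)-1/2)$ and $\alpha > 1/2 - d(u)$ are exactly what is needed for the spectral integrals $\int |\tilde H_j(\lambda)|^2|\lambda|^{-2d(u)}\rmd\lambda$ and $\int|\tilde H_j(\lambda)|^2|\lambda|^{-2D}\rmd\lambda$ to converge after rescaling, which is where $D < 1/2$ is used.
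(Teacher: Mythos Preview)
Your decomposition $W_{j,k;T}^2 - W_{j,k}^2(u) = 2W_{j,k}(u)D_{j,k} + D_{j,k}^2$ and the subsequent splitting into a cross sum and a quadratic sum is exactly what the paper does (with $R_{j,k}(u;T)$ in place of your $D_{j,k}$, and the sums denoted $D_j(u;T)$ and $S_j(u;T)$); the bounds on $\E[D_{j,k}^2]$ via~(\ref{eq:AssumAtimeSmooth2})--(\ref{eq:AssumAtimeSmooth}) and the filter estimate, and the passage from $\Gamma_q$ to $\delta_{j,T}^{-q}$ via Assumption~\ref{assump:weights}(\ref{item:Locassump}), are also the same.

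Two small corrections and one simplification. First, $\sigma_j^2(u)=\E[W_{j,k}^2(u)]=O(2^{2jd(u)})$, not $O(2^{(1+2p+2d(u))j})$; your final product $\sigma_j^2(u)\cdot 2^{(3+2p)j}T^{-2}\delta_{j,T}^{-2}$ then gives the right exponent, so this is only a slip in the intermediate display. Second, your crude triangle-inequality bound $\|\sum_k\gamma_{j,T}(k)D_{j,k}^2\|_2\le\sum_k\gamma_{j,T}(k)\|D_{j,k}\|_4^2$ (and similarly for the cross term) does yield the correct orders --- the paper instead separates mean and variance of each sum and uses the cumulant identity $\mathrm{Cov}(X^2,Y^2)=2\mathrm{Cov}^2(X,Y)+\mathrm{Cum}(X,X,Y,Y)$ together with~(\ref{eq:cumBound}), but the outcome is identical because the variance bounds turn out to be of the same order as the squared means. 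Finally, the ``main obstacle'' you anticipate --- exploiting decorrelation in $k,k'$ via $\Phi_{j,T}$ and Assumption~\ref{assump:weights}(\ref{item:Vassump}) --- is not needed here: only the localization assumption~(\ref{item:Locassump}) is invoked, and no averaging gain in $k$ is required to reach~(\ref{eq:ScalApproxResult}). The frequency-domain machinery with $\Phi_{j,T}$ enters later, in the variance bound~(\ref{eq:VarTildeBound}) for the \emph{tangent} scalogram.
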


Next, we derive a bound of the mean square error for estimating
$f^*(u,0)\constantePsiD(d(u))\;2^{2jd(u)}$ using the estimator
$\widehat{\sigma}^2_{j,T}(u)$, where $\constantePsiD$ is the function defined
by
\begin{equation}
  \label{eq:Kdef}
  \constantePsiD(d)= \int_{-\infty}^{\infty}|\xi|^{-2d}\,|\hat\psi(\xi)|^2\;\rmd\xi\;,\quad   1/2-\alpha<d< M+1/2\;.
\end{equation}
In fact as the estimator $\hat{d}_T(L)$ is defined
in~(\ref{eq:definition:estimator:regression}) using
$\widehat{\sigma}^2_{j,T}(u)$ with $j=L+i$, $i=0,\dots,\ell$, and as
$L,T\to\infty$, it will be convenient to normalize these quantities by
$2^{2Ld(u)}$, so that
$f^*(u,0)\constantePsiD(d(u))\;2^{2jd(u)}/2^{2Ld(u)}=f^*(u,0)\constantePsiD(d(u))\;2^{2id(u)}$
does not depend on $L$.

\begin{theo}\label{theo:rate-param-scalo}
  Let $u\in[0,1]$ and consider a model satisfying
  Assumption~\ref{ass:model_lin}. Assume~\ref{item:Wreg}--\ref{item:MIM} hold
  with $M\geq p\vee d(u)$ and $\alpha>(1+\beta)/2 -d(u)$. Then we have, as
  $j\to\infty$,
\begin{equation}
  \label{eq:BiasControl}
\sigma_j^2(u)=
f^*(u,0)\constantePsiD(d(u))\;2^{2jd(u)}\left\{1+O\left(2^{-\beta j}\right)\right\}\;.
\end{equation}
Suppose moreover that Assumption~\ref{assump:weights} holds and that
  \begin{equation}
    \label{eq:condConsistency}
    2^{(3+2\{p-d(u)\})L}T^{-2} \delta_{L,T}^{-2}\to0\;.
  \end{equation}
Then we have for $j=L+i$ with $i=0,\dots,\ell$,
\begin{multline}
\label{eq:ratesOp}
\E\left[(2^{-2Ld(u)}\widehat{\sigma}^2_{j,T}(u)-f^*(u,0)\constantePsiD(d(u))\;2^{2id(u)})^2\right]\\
=O\left(\delta_{L,T}+2^{(3+2\{p-d(u)\})L}T^{-2} \delta_{L,T}^{-2}+2^{-2\beta L}\right)
\end{multline}
\end{theo}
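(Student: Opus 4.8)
The plan is to establish \ref{eq:BiasControl} first --- a purely deterministic spectral computation --- and then to obtain \ref{eq:ratesOp} by a bias--variance decomposition of $\widehat{\sigma}^2_{j,T}(u)$ around the tangent scalogram. For \ref{eq:BiasControl} I would start from $\sigma^2_j(u)=\int_{-\pi}^\pi|H_j(\lambda)|^2 f(u,\lambda)\,\rmd\lambda$ and substitute $f(u,\lambda)=|1-\rme^{-\rmi\lambda}|^{-2d(u)}f^*(u,\lambda)$. Replacing $f^*(u,\lambda)$ by $f^*(u,0)$ produces, by \ref{eq:genLocSpecDensSemiParamCond}, a remainder bounded by $Cf^*(u,0)\int_{-\pi}^\pi|H_j(\lambda)|^2|1-\rme^{-\rmi\lambda}|^{-2d(u)}|\lambda|^\beta\,\rmd\lambda$, and both this remainder and the main term are evaluated through the scaling of the wavelet filter: after the change of variable $\xi=2^j\lambda$, and using \ref{item:Wreg}--\ref{item:MIM} together with $\sup_\xi|\hat\psi(\xi)|(1+|\xi|)^\alpha<\infty$ from \ref{item:psiHat}, one shows $\int_{-\pi}^\pi|H_j(\lambda)|^2|1-\rme^{-\rmi\lambda}|^{-2d(u)}|\lambda|^{q\beta}\,\rmd\lambda=2^{(2d(u)-q\beta)j}(c_q+o(1))$ for $q\in\{0,1\}$, with $c_0=\constantePsiD(d(u))>0$ and $c_1$ finite. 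This is the exact analogue of the stationary computation in \cite{moulines:roueff:taqqu:2007:jtsa}, $u$ entering only as a parameter, and I would isolate it as a lemma in the appendix; $M\geq p\vee d(u)$ ensures the wavelet variance is finite (through the $\diffop^M$-factorisation of $h_{j,\cdot}$) and that $\constantePsiD(d(u))$ is well defined, while $\alpha>(1+\beta)/2-d(u)$ is exactly what makes the filter-approximation error negligible relative to the main term at the rate $2^{-\beta j}$, the difference between $|1-\rme^{-\rmi\lambda}|^{-2d(u)}$ and $|\lambda|^{-2d(u)}$ adding only an $O(2^{-2j})=O(2^{-\beta j})$ error since $\beta\leq2$. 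Collecting the pieces yields \ref{eq:BiasControl}, and in particular $\sigma^2_j(u)=O(2^{2jd(u)})$ for large $j$.

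For \ref{eq:ratesOp}, fix $j=L+i$ with $0\leq i\leq\ell$; since $\ell$ is fixed, $2^j\asymp2^L$ and, by Assumption~\ref{assump:weights}(\ref{item:delatAssump}), $\delta_{j,T}\asymp\delta_{L,T}$. The tangent coefficients $\{W_{j,k}(u)\}_k$ are weakly stationary with $\E[W_{j,k}^2(u)]=\sigma^2_j(u)$ by \ref{eq:defLocWavSpec}, so the normalisation \ref{eq:normWeights} gives $\E[\widetilde{\sigma}^2_{j,T}(u)]=\sigma^2_j(u)$ --- smoothing introduces no further bias. I then decompose $2^{-2Ld(u)}\widehat{\sigma}^2_{j,T}(u)-f^*(u,0)\constantePsiD(d(u))\,2^{2id(u)}$ (noting $2^{2jd(u)-2Ld(u)}=2^{2id(u)}$) into the approximation error $2^{-2Ld(u)}(\widehat{\sigma}^2_{j,T}(u)-\widetilde{\sigma}^2_{j,T}(u))$, the centred tangent scalogram $2^{-2Ld(u)}(\widetilde{\sigma}^2_{j,T}(u)-\sigma^2_j(u))$, and the deterministic error $2^{-2Ld(u)}(\sigma^2_j(u)-f^*(u,0)\constantePsiD(d(u))\,2^{2jd(u)})$, and bound the expected square of each. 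The deterministic error is, by \ref{eq:BiasControl}, of order $2^{2id(u)}2^{-\beta j}=O(2^{-\beta L})$, so its square is $O(2^{-2\beta L})$. The centred tangent scalogram has expected square $2^{-4Ld(u)}\mathrm{Var}(\widetilde{\sigma}^2_{j,T}(u))$; combining the variance bound \ref{eq:VarTildeBound} for the tangent scalogram (itself obtained from the linear structure of the $W_{j,k}(u)$, the cumulant bound \ref{eq:cumBound} and Assumption~\ref{assump:weights}(\ref{item:Vassump})), $\mathrm{Var}(\widetilde{\sigma}^2_{j,T}(u))=O(\delta_{j,T}\,\sigma^4_j(u))$, with $\sigma^2_j(u)=O(2^{2Ld(u)})$, gives $O(\delta_{L,T})$. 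For the approximation error, Proposition~\ref{prop:local-scalogram} applies (its hypotheses are implied by those assumed here) and, after inserting $2^j\asymp2^L$, $\delta_{j,T}\asymp\delta_{L,T}$ and rearranging exponents, gives an expected square of order $\kappa_{L,T}^2+\kappa_{L,T}$ with $\kappa_{L,T}:=2^{(3+2\{p-d(u)\})L}T^{-2}\delta_{L,T}^{-2}$; by \ref{eq:condConsistency}, $\kappa_{L,T}\to0$, so this is $O(\kappa_{L,T})$. Summing the three bounds via $(a+b+c)^2\leq3(a^2+b^2+c^2)$ gives \ref{eq:ratesOp}.

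The main obstacle is the scaling lemma behind \ref{eq:BiasControl}: one must split $\int_{-\pi}^\pi|H_j(\lambda)|^2|1-\rme^{-\rmi\lambda}|^{-2d(u)}|\lambda|^{q\beta}\,\rmd\lambda$ into a neighbourhood of the origin --- where the change of variable $\xi=2^j\lambda$ and dominated convergence apply, integrability at $0$ being guaranteed by the vanishing moments of $\psi$ --- and its complement (where $|1-\rme^{-\rmi\lambda}|^{-2d(u)}$ is bounded and the $\diffop^M$-factorisation is used), and control the discrepancy between $|H_j(\lambda)|^2$ and its scaled profile $2^j|\hat\psi(2^j\lambda)|^2$ at the sharp rate $2^{-\beta j}$, which is precisely what forces $\alpha>(1+\beta)/2-d(u)$. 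Once \ref{eq:BiasControl} is in hand, deducing \ref{eq:ratesOp} from Proposition~\ref{prop:local-scalogram} and the tangent-scalogram variance bound is essentially bookkeeping of exponents.
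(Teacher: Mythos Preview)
Your proposal is correct and follows essentially the same route as the paper. The only cosmetic differences are that the paper obtains~(\ref{eq:BiasControl}) by directly invoking \cite[Theorem~1]{moulines:roueff:taqqu:2007:jtsa} rather than redoing the scaling computation you describe, and that for~(\ref{eq:ratesOp}) the paper reuses the internal decomposition $\widehat{\sigma}^2_{j,T}(u)=\widetilde{\sigma}^2_{j,T}(u)+S_j(u;T)+D_j(u;T)$ from the proof of Proposition~\ref{prop:local-scalogram} rather than citing the proposition as a black box --- but both paths yield the same bounds via the same variance estimate~(\ref{eq:VarTildeBound}), which the paper also derives inside this proof using exactly the covariance-squared plus fourth-cumulant splitting you indicate.
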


Using the approximation result stated in
Proposition~\ref{prop:local-scalogram}, we may also wish to obtain a central
limit theorem (CLT) for the local scalogram. To this end, we must first derive
a CLT for the tangent scalogram.  Define, for any integer $\ell\geq0$ and
$d\in(1/2-\alpha,M]$ the $2^\ell$-dimensional cross spectral density
$\bD_{\infty,\ell}(\lambda;d)=[\bD_{\infty,\ell,v}(d)]_{v=0,\dots,2^\ell-1}$ of
the DWT of the generalized fractional Brownian motion (see
\cite{moulines:roueff:taqqu:2007:jtsa}) by
$$
\bD_{\infty,\ell}(\lambda;d)= \sum_{l\in\Z} |\lambda+2l\pi|^{-2d}\,\be_{\ell}(\lambda+2l\pi) \,
\overline{\hat{\psi}(\lambda+2l\pi)}\hat{\psi}(2^{-\ell}(\lambda+2l\pi))\;,
$$
where for all $\xi\in\R$, 
$$ 
\be_\ell(\xi) = 2^{-\ell/2}\, [1, \rme^{-\rmi2^{-\ell}\xi}, \dots, \rme^{-\rmi(2^{\ell}-1)2^{-\ell}\xi}]^T\;.
$$
In other words $\bD_{\infty,\ell}(\lambda;d)$ is a vector with entries
$$
\bD_{\infty,\ell,v}(\lambda;d)= 2^{-\ell/2}\, \sum_{l\in\Z} |\lambda+2l\pi|^{-2d}\,
\rme^{-\rmi \,v\,2^{-\ell}(\lambda+2l\pi)}
 \,
\overline{\hat{\psi}(\lambda+2l\pi)}\hat{\psi}(2^{-\ell}(\lambda+2l\pi)),\quad 0\leq v < 2^\ell\;.
$$
We can now state the CLT for the tangent scalogram.  
\begin{theo}\label{thm:clt-tang-scal}
  Let $u\in[0,1]$ and consider a model satisfying
  Assumption~\ref{ass:model_lin}. Suppose that~\ref{item:Wreg}--\ref{item:MIM}
  hold with $M\geq p\vee d(u)$, $\alpha>1/2-d(u)$. Suppose moreover one of the
  two following assertion holds.
  \begin{enumerate}[(a)]
  \item Assumption~\ref{assump:rect_weights} holds and $\{\varepsilon_t\}$ is  an i.i.d. sequence.
  \item
  Assumptions~\ref{assump:weights}~(\ref{item:delatAssump})--(\ref{item:neglicAssump}) hold
  and $\{\varepsilon_t\}$ is a Gaussian process.
  \end{enumerate}
Then, for any $\ell\geq0$, the following weak convergence holds.
\begin{equation}\label{eq:S_jCLT}
\left(\widetilde{S}_L(u)-\E\left[\widetilde{S}_L(u)\right]\right)\Rightarrow \calN(0,(f^*(u,0))^2\Sigma(u)) \;,
\end{equation}
where
\begin{equation}\label{eq:S_jdef}
\widetilde{S}_L(u)=2^{-2Ld(u)}\delta_{L,T}^{-1/2}
[\widetilde{\sigma}^2_{L,T}(u)\,\,\widetilde{\sigma}^2_{L+1,T}(u)\,\,\dots\,\,\widetilde{\sigma}^2_{L+\ell,T}(u)]^T \; ,
\end{equation}
and $\Sigma(u)$ is the $(\ell+1)\times(\ell+1)$ symmetric matrix defined by
\begin{equation}\label{eq:Sigmadef}
\Sigma_{i,i'}(u)=2\ 2^{\{1+4d(u)\}i}\sum_{v=0}^{2^{i-i'}-1}V(0,0;i-i',v)
\int_{-\pi}^\pi\left|\bD_{\infty,i-i',v}(\lambda;d(u))\right|^2\,\rmd\lambda\;,
\end{equation}
on the bottom-left triangle $0\leq i'\leq i\leq \ell$ with
$V(0,0;i-i',v)$ defined in~(\ref{eq:VasympDef}). 
\end{theo}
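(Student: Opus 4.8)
The plan is to establish asymptotic normality of the vector $\widetilde S_L(u)$ by a blocking/Lindeberg argument in case~(a) and by a Gaussian cumulant argument in case~(b), after first reducing in both cases to a manageable spectral expression for the wavelet coefficients. The first step is to express the tangent wavelet coefficients $W_{j,k}(u)$ through~(\ref{eq:wavcoeffStatSpectral}) as a single stochastic integral against $\rmd Z$, and to exploit the self-similar structure of the DWT: writing $j=L+i$ with $i=0,\dots,\ell$, one decomposes the scale-$j$ coefficients at resolution $L$ using the index set $\calT_j(i,v)$ so that $W_{L+i,k}(u)$, $k=0,\dots,T_{L+i}-1$, is re-indexed through the pairs $(i,v)$ with $v\in\{0,\dots,2^i-1\}$. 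This is exactly the device that makes $\Phi_{j,T}(\lambda;i,v)$ and the limit constants $V(i,v;i',v')$ enter. After this re-indexing, $\widetilde\sigma^2_{L+i,T}(u)$ is a quadratic form in the centered variables $W_{L,\cdot}(u)$ with coefficients carried by the weights, and the whole vector $\widetilde S_L(u)-\E[\widetilde S_L(u)]$ is, up to the normalization $2^{-2Ld(u)}\delta_{L,T}^{-1/2}$, a centered quadratic functional of $\rmd Z$.

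The second step is the identification of the limiting covariance. Using~(\ref{eq:genLocSpecDensSemiParam})--(\ref{eq:genLocSpecDensSemiParamCond}) together with the bias expansion~(\ref{eq:BiasControl}) (valid from Theorem~\ref{theo:rate-param-scalo}) and the scaling of $|\tilde H_j|^2|A(u;\cdot)|^2$ near zero frequency, one shows that the dominant contribution to $\mathrm{Cov}(\widetilde\sigma^2_{L+i,T}(u),\widetilde\sigma^2_{L+i',T}(u))$ comes from a neighborhood of $\lambda=0$ of size $2^{-L}$; rescaling $\lambda\mapsto 2^{-L}\lambda$ turns $H_j$ into the fixed Fourier transform $\hat\psi$, turns $|A(u;\cdot)|^2$ into the pure power $f^*(u,0)|\lambda|^{-2d(u)}$, and produces the aliased kernel $\bD_{\infty,i-i',v}(\lambda;d(u))$. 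Combining this with $\delta_{j,T}^{-1}\int\Phi_{j,T}\overline{\Phi_{j,T}}\to V$ from Assumption~\ref{assump:weights}(\ref{item:Vassump}) and the factor $(f^*(u,0))^2$ yields precisely the matrix $\Sigma(u)$ in~(\ref{eq:Sigmadef}); the factor $2$ and the powers $2^{\{1+4d(u)\}i}$ are bookkeeping coming from the Gaussian/linear quadratic-form variance formula and from the scale normalization $2^{-2Ld(u)}$ against $j=L+i$. The error terms (the off-diagonal frequency contributions, the $|\lambda|^\beta$ correction in $f^*$, and the difference $A^0_{t,T}-A(t/T,\cdot)$) are controlled by~(\ref{eq:AssumAtimeSmooth2}), (\ref{eq:AssumAtimeSmooth}), condition~(\ref{eq:condConsistency}) being not needed here, and the regularity hypotheses $M\geq p\vee d(u)$, $\alpha>1/2-d(u)$.

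The third step is the passage to the Gaussian limit, and this is where cases~(a) and~(b) diverge. In case~(b), $\{\varepsilon_t\}$ Gaussian means $\rmd Z$ is a Gaussian orthogonal measure, so $\widetilde S_L(u)-\E[\widetilde S_L(u)]$ is a finite sum of second-order Wiener chaos; one invokes the standard criterion that a sequence in a fixed chaos with converging covariance converges in law to the Gaussian with that covariance provided the fourth cumulants vanish, and the vanishing of the (normalized) fourth cumulant is exactly what Assumption~\ref{assump:weights}(\ref{item:neglicAssump}) delivers: the sup-bound $\delta_{j,T}^{-1/2}\sup_{\eta\le|\lambda|\le\pi}|\Phi_{j,T}(\lambda;i,v)|\to0$ kills the contribution of the non-small frequencies to the trace of the fourth power of the relevant operator, while near zero the operator norm is $o(1)$ after normalization. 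In case~(a), $\{\varepsilon_t\}$ is i.i.d. but not necessarily Gaussian, so one cannot use chaos; instead, with the rectangular kernel of Assumption~\ref{assump:rect_weights} the weights have disjoint supports of length $\asymp b_TT_j$, and one partitions the sum defining $\widetilde\sigma^2_{L,T}(u)$ into $m_T\to\infty$ big blocks separated by small blocks, uses the finite-range decay of the wavelet filter plus the mixing coming from the i.i.d. innovations to show the big blocks are asymptotically independent (the separating blocks and the filter tails being negligible by~\ref{item:psiHat} with $\alpha>1/2-d(u)$), and applies the Lindeberg CLT for triangular arrays of independent summands, the Lindeberg condition following from $\delta_{j,T}\to0$ and $\E[\varepsilon_0^4]<\infty$. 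The main obstacle is precisely this step~3 in case~(a): making the block-independence rigorous requires a careful truncation of the filter together with uniform fourth-moment bounds on the block sums, and it is the reason the theorem restricts to the compactly supported rectangular weights there; everything else (the re-indexing of step~1 and the frequency-localization of step~2) is a controlled computation of the type already carried out for the stationary case in~\cite{moulines:roueff:taqqu:2007:jtsa}, adapted via the bounds~(\ref{eq:AssumAtimeSmooth2})--(\ref{eq:AssumAtimeSmooth}) and the cumulant bound~(\ref{eq:cumBound}).
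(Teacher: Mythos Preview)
Your overall architecture---compute the limiting covariance, then establish asymptotic normality---matches the paper, but the concrete tools you invoke differ substantially from what the paper actually does, and a couple of points are slightly off.

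\textbf{Case~(a).} The paper does not build a blocking/Lindeberg argument at all. Under Assumption~\ref{assump:rect_weights} the weights are constant on a window of length $\asymp b_TT_j$, so each $\widetilde\sigma^2_{L+i,T}(u)$ is literally the usual (unweighted) scalogram of a stationary linear process over that window. The paper then simply invokes \cite[Theorem~2]{roueff-taqqu-2009b}, which already gives the CLT for such sums under i.i.d.\ innovations with four moments. Your block construction is not wrong in spirit, but it is reproving a cited result; the reason the theorem restricts to rectangular weights in~(a) is precisely so that this direct reduction is available, not because blocks require constant weights.

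\textbf{Case~(b).} The paper does not use the Nualart--Peccati fourth moment criterion. It writes $\mu^T\widetilde S_L(u)=\xi_L^T\Delta_L\xi_L$ with $\xi_L$ the Gaussian vector of wavelet coefficients and $\Delta_L$ diagonal carrying the weights, and applies the spectral-radius criterion of \cite[Lemma~12]{moulines-roueff-taqqu-2008}: it suffices that $\rho(\Delta_L)\rho(\mathrm{Cov}(\xi_L))\to0$ together with convergence of the covariance. The first factor is $O(\delta_{L,T}^{1/2}2^{-2Ld(u)})$ by Assumption~\ref{assump:weights}(\ref{item:delatAssump}), and the second is $O(2^{2Ld(u)})$ by the uniform bound $\|\bD_{L+i}\|_\infty=O(2^{2Ld(u)})$ from \cite[Theorem~1]{moulines:roueff:taqqu:2007:jtsa}; their product is $o(1)$. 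Your fourth-moment route would work too, but your attribution of Assumption~\ref{assump:weights}(\ref{item:neglicAssump}) to the fourth-cumulant step is misplaced: in the paper that assumption enters only in the covariance computation (through Lemma~\ref{lem:CovLim}), to show that the integral against $\Phi_{j,T}\overline{\Phi_{j,T}}$ localizes at $\lambda=0$. The asymptotic-normality step itself uses only~(\ref{item:delatAssump}).

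\textbf{Covariance identification.} Your rescaling picture is morally correct, but the paper does it more cleanly by appealing to the already-established convergence $\|2^{-2d(u)j}\bD_{j,j-j'}-f^*(u,0)\bD_{\infty,i-i'}(\cdot;d(u))\|_\infty\to0$ from \cite[Theorem~1(b)]{moulines:roueff:taqqu:2007:jtsa}, then passing to the limit via Lemma~\ref{lem:CovLim}. Two minor slips in your write-up: the tangent scalogram is built from $A(u;\cdot)$ alone, so the error $A^0_{t,T}-A(t/T,\cdot)$ never appears here (that is the content of Proposition~\ref{prop:local-scalogram}, not of this theorem); and~(\ref{eq:BiasControl}) is irrelevant since the statement is already centered at $\E[\widetilde S_L(u)]$.
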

\begin{remark}
  A CLT for the sum of squares of the wavelet coefficients of a stationary long
  memory process was established in~\cite{moulines-roueff-taqqu-2008} for
  Gaussian processes and extended in~\cite{roueff-taqqu-2009b} for linear
  processes.  We separate two sets of assumptions in
  Theorem~\ref{thm:clt-tang-scal}. The result in the linear case is directly
  applicable under Assumption~\ref{assump:rect_weights} in (a) since the
  weights are constant.  To obtain a CLT for general weights
  (Assumption~\ref{assump:weights} in (b)) we use the additional Gaussian
  assumption.  To avoid the Gaussian assumption for such general weights, one
  needs to revisit the results in~\cite{roueff-taqqu-2009} to obtain a CLT for
  sums of weighted squares of decimated linear processes. Such an extension
  goes beyond the scope of this article.
\end{remark}
Applying Proposition~\ref{prop:local-scalogram} and Theorem~\ref{thm:clt-tang-scal}, we immediately get the following result. 
\begin{cor}\label{cor:clt-local-scalogram}
  Let both the assumptions of  Proposition~\ref{prop:local-scalogram} and
  Theorem~\ref{thm:clt-tang-scal} hold. Let $L$ be
  such that
  \begin{equation}
    \label{eq:approxScalNeglict}
2^{(3+2\{p-d(u)\})L}T^{-2} \delta_{L,T}^{-3}\to0\;.    
  \end{equation}
Then, for any $\ell\geq0$, the following weak convergence holds.
\begin{equation}\label{eq:hatS_jCLT}
\left(\widehat{S}_L(u)-\E\left[\widehat{S}_L(u)\right]\right)\Rightarrow \calN(0,(f^*(u,0))^2\Sigma(u)) \;,
\end{equation}
where
\begin{equation}\label{eq:hatS_jdef}
\widehat{S}_L(u)=2^{-2Ld(u)}\delta_{L,T}^{-1/2}
[\widehat{\sigma}^2_{L,T}(u)\,\,\widehat{\sigma}^2_{L+1,T}(u)\,\,\dots\,\,\widehat{\sigma}^2_{L+\ell,T}(u)]^T \; .
\end{equation}
and $\Sigma(u)$ is the $(\ell+1)\times(\ell+1)$ symmetric matrix defined by~(\ref{eq:Sigmadef}).
\end{cor}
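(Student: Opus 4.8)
The plan is to deduce the statement from Theorem~\ref{thm:clt-tang-scal} via a Slutsky-type argument, using Proposition~\ref{prop:local-scalogram} to show that replacing the local scalogram by the tangent scalogram is asymptotically harmless. Write
\[
\widehat{S}_L(u)-\E[\widehat{S}_L(u)]
=\left(\widetilde{S}_L(u)-\E[\widetilde{S}_L(u)]\right)+R_L(u),\qquad
R_L(u)=\left(\widehat{S}_L(u)-\widetilde{S}_L(u)\right)-\E\left[\widehat{S}_L(u)-\widetilde{S}_L(u)\right].
\]
The first term on the right converges weakly to $\calN(0,(f^*(u,0))^2\Sigma(u))$ by Theorem~\ref{thm:clt-tang-scal}, whose hypotheses are among the assumptions made here. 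Since $R_L(u)$ is the centered version of $\widehat{S}_L(u)-\widetilde{S}_L(u)$, its squared $L^2$-norm is bounded by a constant times $\E\left[\|\widehat{S}_L(u)-\widetilde{S}_L(u)\|^2\right]$ (this bound controls both the variance of each coordinate and, via Jensen, the squared mean), so it suffices to prove that this last quantity tends to $0$; then $R_L(u)\to 0$ in probability and Slutsky's lemma concludes.

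I would bound $\E\left[\|\widehat{S}_L(u)-\widetilde{S}_L(u)\|^2\right]$ coordinate by coordinate. For $i=0,\dots,\ell$ the $i$-th entry of $\widehat{S}_L(u)-\widetilde{S}_L(u)$ equals $2^{-2Ld(u)}\delta_{L,T}^{-1/2}\bigl(\widehat{\sigma}^2_{L+i,T}(u)-\widetilde{\sigma}^2_{L+i,T}(u)\bigr)$, so Proposition~\ref{prop:local-scalogram} with $j=L+i$ bounds its second moment by a constant times
\[
2^{-4Ld(u)}\delta_{L,T}^{-1}\left(2^{(6+4p)(L+i)}T^{-4}\delta_{L+i,T}^{-4}+2^{(3+2p+2d(u))(L+i)}T^{-2}\delta_{L+i,T}^{-2}\right).
\]
Since $\ell$ is fixed and $\delta_{L+i,T}\sim 2^{i}\delta_{L,T}$ by Assumption~\ref{assump:weights}(\ref{item:delatAssump}), which holds under the present hypotheses, the powers of $2^i$ may be absorbed into the implied constant, and the displayed quantity is
\[
O\!\left(2^{(6+4\{p-d(u)\})L}T^{-4}\delta_{L,T}^{-5}+2^{(3+2\{p-d(u)\})L}T^{-2}\delta_{L,T}^{-3}\right).
\]
The second summand is exactly the left-hand side of~(\ref{eq:approxScalNeglict}), hence tends to $0$. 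The first summand equals $\bigl(2^{(3+2\{p-d(u)\})L}T^{-2}\delta_{L,T}^{-3}\bigr)^2\,\delta_{L,T}$, which also tends to $0$ since the bracketed factor vanishes by~(\ref{eq:approxScalNeglict}) and $\delta_{L,T}\to 0$ by Assumption~\ref{assump:weights}(\ref{item:delatAssump}). Summing over the $\ell+1$ coordinates gives $\E\left[\|\widehat{S}_L(u)-\widetilde{S}_L(u)\|^2\right]\to 0$.

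Putting the two parts together, $R_L(u)\to 0$ in probability, so $\widehat{S}_L(u)-\E[\widehat{S}_L(u)]$ has the same weak limit $\calN(0,(f^*(u,0))^2\Sigma(u))$ as $\widetilde{S}_L(u)-\E[\widetilde{S}_L(u)]$. The argument is essentially bookkeeping; the one point I would double-check is the exponent arithmetic in the last display, namely that multiplication of the bound of Proposition~\ref{prop:local-scalogram} by $2^{-4Ld(u)}\delta_{L,T}^{-1}$ turns the first error term into precisely the square of the quantity appearing in~(\ref{eq:approxScalNeglict}) times one surplus factor $\delta_{L,T}$, so that~(\ref{eq:approxScalNeglict}) together with $\delta_{L,T}\to 0$ is exactly what is needed and no stronger condition is required.
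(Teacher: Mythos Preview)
Your proof is correct and is exactly the argument the paper has in mind: the paper does not spell out a proof of this corollary but simply states that it follows ``immediately'' from Proposition~\ref{prop:local-scalogram} and Theorem~\ref{thm:clt-tang-scal}, and your Slutsky argument with the coordinatewise $L^2$ bound is the natural way to make that precise. Your exponent bookkeeping is fine; in particular the first error term is indeed $\bigl(2^{(3+2\{p-d(u)\})L}T^{-2}\delta_{L,T}^{-3}\bigr)^2\delta_{L,T}$, and since the weights are nonnegative and sum to one you even have $\delta_{L,T}\le 1$, so~(\ref{eq:approxScalNeglict}) alone already kills this term.
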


\subsection{Asymptotic properties of the estimator $\widehat{d}_T(L)$}
The following result establishes the consistency of the estimator $\widehat{d}_T(L)$ defined
in~(\ref{eq:definition:estimator:regression}) with $\bw =[w_0,\dots,w_{\ell}]^T$ fulfilling~(\ref{eq:propertyw}) and provides a rate of convergence.
\begin{cor}\label{cor:rate-param-est}
Under the same assumptions as Theorem~\ref{theo:rate-param-scalo}, if moreover $L\to\infty$, then we have
\begin{equation}
\label{eq:ratesOphatd}
\widehat{d}_T(L)=d(u)+O_p\left(\delta_{L,T}^{1/2}+2^{(3/2+\{p-d(u)\})L}T^{-1} \delta_{L,T}^{-1}+2^{-\beta L}\right)=d(u)+o_p(1)\;.
\end{equation}
\end{cor}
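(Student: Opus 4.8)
The plan is to combine the mean-square estimate of Theorem~\ref{theo:rate-param-scalo} with a delta-method argument applied to the logarithm in~(\ref{eq:definition:estimator:regression}). First I would introduce the normalized quantities $R_{i,T}=2^{-2Ld(u)}\widehat{\sigma}^2_{L+i,T}(u)$ for $i=0,\dots,\ell$ and their deterministic targets $r_i=f^*(u,0)\constantePsiD(d(u))\,2^{2id(u)}$, which are fixed positive numbers not depending on $L$ or $T$. Theorem~\ref{theo:rate-param-scalo} gives $\E[(R_{i,T}-r_i)^2]=O(\varepsilon_T^2)$ where $\varepsilon_T^2=\delta_{L,T}+2^{(3+2\{p-d(u)\})L}T^{-2}\delta_{L,T}^{-2}+2^{-2\beta L}$, so by Markov's inequality $R_{i,T}=r_i+O_p(\varepsilon_T)$ for each $i$, and since $\ell$ is fixed this holds jointly. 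Under the stated hypotheses ($L\to\infty$, Assumption~\ref{assump:weights}, and~(\ref{eq:condConsistency})), one checks $\varepsilon_T\to0$, in particular $\delta_{L,T}\to0$ by Assumption~\ref{assump:weights}(\ref{item:delatAssump}) and the middle term vanishes by~(\ref{eq:condConsistency}); hence $R_{i,T}\to r_i>0$ in probability.

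Next I would write $\log\widehat{\sigma}^2_{L+i,T}(u)=\log R_{i,T}+2Ld(u)\log 2$ and plug into~(\ref{eq:definition:estimator:regression}). Since $\sum_i w_i=0$, the term $2Ld(u)\log 2$ cancels, and $2\log(2)\sum_i i w_i=1$ gives $\sum_i w_i\log(2^{2id(u)})=2d(u)\log 2\sum_i i w_i=d(u)$. Therefore $\widehat{d}_T(L)=\sum_{i=0}^\ell w_i\log R_{i,T}$ and $\widehat{d}_T(L)-d(u)=\sum_{i=0}^\ell w_i\bigl(\log R_{i,T}-\log r_i\bigr)$. Because each $R_{i,T}$ converges in probability to the strictly positive constant $r_i$, the logarithm is (eventually) evaluated on a neighborhood of $r_i$ on which it is Lipschitz, so $\log R_{i,T}-\log r_i=O_p(R_{i,T}-r_i)=O_p(\varepsilon_T)$. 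Summing the $\ell+1$ terms with fixed weights $w_i$ yields $\widehat{d}_T(L)-d(u)=O_p(\varepsilon_T)$, which after substituting $\varepsilon_T=(\delta_{L,T}+2^{(3+2\{p-d(u)\})L}T^{-2}\delta_{L,T}^{-2}+2^{-2\beta L})^{1/2}$ and using $\sqrt{a+b+c}\asymp\sqrt a+\sqrt b+\sqrt c$ gives exactly the rate $\delta_{L,T}^{1/2}+2^{(3/2+\{p-d(u)\})L}T^{-1}\delta_{L,T}^{-1}+2^{-\beta L}$ in~(\ref{eq:ratesOphatd}); the $o_p(1)$ claim then follows from $\varepsilon_T\to0$.

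The only delicate point — and the one I would phrase carefully — is the passage from the $L^2$ bound on $R_{i,T}-r_i$ to an $O_p$ bound on $\log R_{i,T}-\log r_i$, since $\log$ is not globally Lipschitz and blows up near $0$. This is handled by a standard localization: fix $\epsilon\in(0,r_i)$, note $\P(|R_{i,T}-r_i|>\epsilon)\to0$, and on the complementary event $\{|R_{i,T}-r_i|\le\epsilon\}$ one has $|\log R_{i,T}-\log r_i|\le (r_i-\epsilon)^{-1}|R_{i,T}-r_i|$; combining with the $L^2$ bound via Markov on that event gives the claimed stochastic order. I do not expect any genuine obstacle here; everything reduces to Theorem~\ref{theo:rate-param-scalo} together with the elementary algebra enforced by the constraints~(\ref{eq:propertyw}) on $\bw$.
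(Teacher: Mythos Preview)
Your argument is correct and is exactly the natural derivation the paper has in mind: the corollary is stated without an explicit proof, but it follows from Theorem~\ref{theo:rate-param-scalo} via the same $\log$/$\delta$-method computation with the weight constraints~(\ref{eq:propertyw}) that the paper spells out in the proof of Theorem~\ref{theo:clt-param-est} (using $g(x)=\sum_i w_i\log x_i$). Your normalization $R_{i,T}=2^{-2Ld(u)}\widehat\sigma^2_{L+i,T}(u)$, the cancellation of $2Ld(u)\log 2$ via $\sum_i w_i=0$, and the localization to handle the non-Lipschitz behavior of $\log$ near $0$ are all standard and in line with the paper's approach.
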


Let us determine the optimal rate of convergence of $\widehat{d}_T(L)$ towards $d(u)$. 
By balancing the three terms in the right-hand side of~(\ref{eq:ratesOphatd}), we find that for $2^L\asymp
T^{2/(3+6\beta-2d(u)+2p)}$ and $\band_T\asymp (T_L\delta_{L,T})^{-1}\asymp 
T^{(2d(u)-2p-2\beta-1)/(3+6\beta-2d(u)+2p)}$, these three terms are asymptotically of the same order.
Hence for this choice of the lowest scale $L$ and the bandwidth $\band_T$ (recall that $\delta_{L,T}^{-1}\asymp \band_T
T2^{-L}\to\infty$), we get 
$$
\widehat{d}_T(L)=d(u)+O_p\left(T^{-2\beta/(3+6\beta+2\{p-d(u)\})}\right) \; .
$$
We observe that the rate of convergence depends on the unknown parameter
$d(u)$.  The dependence in $d(u)$ comes from the approximation
result~(\ref{eq:ScalApproxResult}), which appears in~(\ref{eq:ratesOphatd}) in
the term $2^{(3/2+\{p-d(u)\})L}T^{-1} \delta_{L,T}^{-1}$.  Other error terms
in~(\ref{eq:ratesOphatd}) have rates not depending on $d(u)$, which is
consistent with the facts that 1) the rate of convergence does not depend on
$d$ in the stationary case \cite[Theorem~2]{moulines:roueff:taqqu:2007:jtsa},
and 2) these two terms come from the tangent weakly stationary approximation.
On the other hand, the term $2^{(3/2+\{p-d(u)\})L}T^{-1} \delta_{L,T}^{-1}$ may
seem unusual for estimating the time-varying parameter for local-stationary
processes. For instance, for a time-varying AR (tvAR) process with a
Lipschitz-continuous parameter corresponding to~(\ref{eq:AssumAtimeSmooth2})
with $D=0$, the approximation error due to non-stationarity yields the error
term $\band_T\asymp (T\delta_{L,T})^{-1}$.  Indeed this corresponds to the term
$(n\mu)^{-\beta}$ with $\beta=1$ in
\cite[Theorem~2]{moulines-priouret-roueff-2005} which is shown to yield a rate
optimal convergence in Theorem~4 of the same reference.  Our error term is
always larger as it includes the additional multiplicative term
$2^{(3/2+\{p-d(u)\})L}$ which tends to $\infty$ since $p-d(u)>-1/2$ and
$L\to\infty$. Although we cannot assert that our rate is optimal, it can be
explained as follows. In contrast to the tvAR process, our setting is locally
semi-parametric, which implies to let $L$ tend to $\infty$ in order to capture
the low frequency behavior driven by the memory parameter $d$.  It is thus
reassuring that if $L$ were allowed to remain fix our error bound would be of
the same order as for the locally parametric setting. The fact that letting
$L\to\infty$ decreases the rate of convergence is not surprising as the low
frequency behavior implies large lags in the process, which naturally worsens
the quality of the local stationary approximations.  To conclude this
discussion, it is interesting to note that the wavelet estimation of the memory
parameter of a non-linear process may also yield a rate of convergence
depending on the unknown parameter. It is indeed the case for the
infinite-source Poisson process, see
\cite[Remark~4.2]{fay:roueff:soulier:2007}.

We now state the asymptotic normality of the estimator, which mainly follows by
applying Proposition~\ref{prop:local-scalogram}, Theorem~\ref{thm:clt-tang-scal}, the bound~(\ref{eq:BiasControl}) and
the $\delta$--method as in~\cite{moulines:roueff:taqqu:2007:fractals}. 

\begin{theo}\label{theo:clt-param-est}
  Let the assumptions of Corollary~\ref{cor:clt-local-scalogram} hold with
  $\alpha>(1+\beta)/2-d(u)$. Let $L$ be  such that
  \begin{equation}
    \label{eq:approxScalandBiasNeglict}
2^{(3+2\{p-d(u)\})L}T^{-2} \delta_{L,T}^{-3}\to0\quad\text{and}\quad
2^{-2\beta L}\delta_{L,T}^{-1}\to0 \;.    
  \end{equation}
Then, the following weak convergence holds:
\begin{equation}
  \label{eq:CLTd}
\delta_{L,T}^{-1/2}(\widehat{d}_T(L)-d(u))\Rightarrow \calN(0,\mathcal{V}(u))\;,
\end{equation}
where $\widehat{d}_T(L)$ is defined by~(\ref{eq:definition:estimator:regression}) and
$$
\mathcal{V}(u) = \frac1{\constantePsiD^2(d(u))}\sum_{i,i'=0}^{\ell}\Sigma_{i,i'}(u)2^{-2(i+i')d(u)}w_iw_{i'}\;.
$$
with $\Sigma(u)$ and  $\constantePsiD(d(u))$ defined by~(\ref{eq:Sigmadef}) and~(\ref{eq:Kdef}), respectively.
\end{theo}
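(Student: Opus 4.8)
The plan is to derive the CLT for $\widehat{d}_T(L)$ from the joint CLT for the local scalogram vector $\widehat{S}_L(u)$ established in Corollary~\ref{cor:clt-local-scalogram}, combined with the bias control~(\ref{eq:BiasControl}), by means of the $\delta$-method applied to the logarithm. First I would note that, writing $\widehat{\sigma}^2_{j,T}(u)$ for $j=L+i$, $i=0,\dots,\ell$, the estimator is $\widehat{d}_T(L)=\sum_{i=0}^{\ell} w_i \log\widehat{\sigma}^2_{L+i,T}(u)$. The key renormalization is to divide each $\widehat{\sigma}^2_{L+i,T}(u)$ by its asymptotic mean $m_i:=f^*(u,0)\constantePsiD(d(u))\,2^{2Ld(u)}2^{2id(u)}$: then $\log\widehat{\sigma}^2_{L+i,T}(u)=\log m_i+\log\bigl(\widehat{\sigma}^2_{L+i,T}(u)/m_i\bigr)$, and since $\sum_i w_i=0$ the additive constants $\log(f^*(u,0)\constantePsiD(d(u)))$ cancel, while $\sum_i w_i\log(2^{2Ld(u)}2^{2id(u)})=2d(u)\log 2\sum_i(L+i)w_i=d(u)$ by~(\ref{eq:propertyw}). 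So $\widehat{d}_T(L)-d(u)=\sum_{i=0}^{\ell} w_i\log\bigl(\widehat{\sigma}^2_{L+i,T}(u)/m_i\bigr)$, and everything reduces to a CLT for the vector of normalized ratios.

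Next I would write $R_{i,T}:=2^{-2Ld(u)}\widehat{\sigma}^2_{L+i,T}(u)$, so that $R_{i,T}/(f^*(u,0)\constantePsiD(d(u))2^{2id(u)})$ is the ratio above. Corollary~\ref{cor:clt-local-scalogram} gives $\delta_{L,T}^{-1/2}\bigl(\widehat{S}_L(u)-\E[\widehat{S}_L(u)]\bigr)\Rightarrow\calN(0,(f^*(u,0))^2\Sigma(u))$ where $\widehat{S}_L(u)=\delta_{L,T}^{-1/2}[R_{0,T},\dots,R_{\ell,T}]^T$ — wait, more precisely $\widehat{S}_L(u)$ already carries the $\delta_{L,T}^{-1/2}$ factor, so the statement is $\widehat{S}_L(u)-\E[\widehat{S}_L(u)]\Rightarrow\calN(0,(f^*(u,0))^2\Sigma(u))$, i.e. $\delta_{L,T}^{-1/2}(R_{i,T}-\E[R_{i,T}])_i\Rightarrow\calN$. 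To replace the random centering $\E[R_{i,T}]$ by the deterministic target $f^*(u,0)\constantePsiD(d(u))2^{2id(u)}$, I would invoke the bias control: by~(\ref{eq:BiasControl}), $\E[R_{i,T}]=2^{-2Ld(u)}\sigma^2_{L+i}(u)\cdot(1+o(1))$-type correction plus the scalogram-to-tangent approximation error; combining~(\ref{eq:BiasControl}), the approximation bound~(\ref{eq:ScalApproxResult}) from Proposition~\ref{prop:local-scalogram}, and the bias of $\widetilde{\sigma}^2_{j,T}(u)$, one gets $\E[R_{i,T}]=f^*(u,0)\constantePsiD(d(u))2^{2id(u)}+O(2^{-2\beta L})+O\bigl(2^{(3/2+\{p-d(u)\})L}T^{-1}\delta_{L,T}^{-1}\cdot(\text{something})\bigr)$, and the two conditions in~(\ref{eq:approxScalandBiasNeglict}) are exactly what makes $\delta_{L,T}^{-1/2}$ times this difference tend to $0$. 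Hence $\delta_{L,T}^{-1/2}\bigl(R_{i,T}-f^*(u,0)\constantePsiD(d(u))2^{2id(u)}\bigr)_i\Rightarrow\calN(0,(f^*(u,0))^2\Sigma(u))$ as well.

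Then I would apply the $\delta$-method with the map $g:(x_0,\dots,x_\ell)\mapsto\sum_{i=0}^\ell w_i\log x_i$ at the point $x_i^\star=f^*(u,0)\constantePsiD(d(u))2^{2id(u)}$. Since $R_{i,T}\to x_i^\star>0$ in probability (consistency, which also follows from the display above), and $g$ is $C^1$ near $x^\star$ with $\partial_i g(x^\star)=w_i/x_i^\star$, the $\delta$-method yields $\delta_{L,T}^{-1/2}\bigl(\widehat{d}_T(L)-d(u)\bigr)=\delta_{L,T}^{-1/2}\bigl(g(R_{\cdot,T})-g(x^\star)\bigr)\Rightarrow\calN\bigl(0,\nabla g(x^\star)^T(f^*(u,0))^2\Sigma(u)\nabla g(x^\star)\bigr)$. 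Expanding the quadratic form: $\nabla g(x^\star)^T(f^*(u,0))^2\Sigma(u)\nabla g(x^\star)=\sum_{i,i'}\frac{w_i w_{i'}}{x_i^\star x_{i'}^\star}(f^*(u,0))^2\Sigma_{i,i'}(u)=\sum_{i,i'}\frac{w_i w_{i'}}{(\constantePsiD(d(u)))^2 2^{2(i+i')d(u)}}\Sigma_{i,i'}(u)$, which is exactly $\mathcal{V}(u)$ as defined in the statement. The main obstacle — and the step deserving the most care — is the bias/mean-replacement step: one must track precisely how the error terms from Proposition~\ref{prop:local-scalogram} and from~(\ref{eq:BiasControl}) propagate through the normalization $2^{-2Ld(u)}$, verify that $\E[R_{i,T}^2]$ stays bounded (so that convergence of $\E[R_{i,T}]$ together with the weak limit is legitimate, via uniform integrability or a direct second-moment argument as in Theorem~\ref{theo:rate-param-scalo}), and check that~(\ref{eq:approxScalandBiasNeglict}) dominates all of these error rates after multiplication by $\delta_{L,T}^{-1/2}$; the rest is a routine $\delta$-method computation of the type already carried out in~\cite{moulines:roueff:taqqu:2007:fractals}.
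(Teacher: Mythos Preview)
Your proposal is correct and follows essentially the same route as the paper: use Corollary~\ref{cor:clt-local-scalogram} for the CLT of the normalized scalogram vector with centering $\E[\widehat S_L(u)]$, replace this centering by the deterministic target $x_i^\star=f^*(u,0)\constantePsiD(d(u))2^{2id(u)}$ via the bias control~(\ref{eq:BiasControl}) and the approximation bound of Proposition~\ref{prop:local-scalogram} (the two conditions in~(\ref{eq:approxScalandBiasNeglict}) are precisely what make each piece $o(\delta_{L,T}^{1/2})$), and then apply the $\delta$-method to $g(x)=\sum_i w_i\log x_i$ at $x^\star$, exactly as in~\cite{moulines:roueff:taqqu:2007:fractals}. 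One small over-complication: the mean-replacement step is a purely deterministic statement---you only need $\delta_{L,T}^{-1/2}\bigl(\E[R_{i,T}]-x_i^\star\bigr)\to0$---so no uniform-integrability or second-moment argument is required there.
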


\section{Numerical examples}\label{sec:5}

We used a Daubechies wavelet with $M=2$ vanishing moments and Fourier decay $\alpha=1.34$ (see \cite{fay-mouline-roueff-taqqu-2009}).
Hence our asymptotic results hold for $-0.84<(1+\beta)/2-\alpha< d(u)\leq M=2$ (the left bound $-0.84$ corresponds to choose $\beta$
arbitrarily small). In particular $d(u)$ will be allowed to take values beyond the unit root case ($d(u)\geq 1$).

\subsection{Simulated data}

We simulate a   $T=2^{12}$-long sample  $X_{1,T},\dots,X_{T,T}$ of a tvARFIMA(1,$d$,0) process  
which has a local spectral density given by~(\ref{eq:arfimaLocSpecDens}) with $\sigma\equiv1$, $\phi_1\equiv0.8$ and 
$$
d(u)=(1-\cos(\pi u/2))/3,\quad u\in[0,1]\;. 
$$
The obtained simulated data is represented in Figure~\ref{fig:raw_simu}.
\begin{figure}[htbp]
\centering
\includegraphics[width=0.6\textwidth]{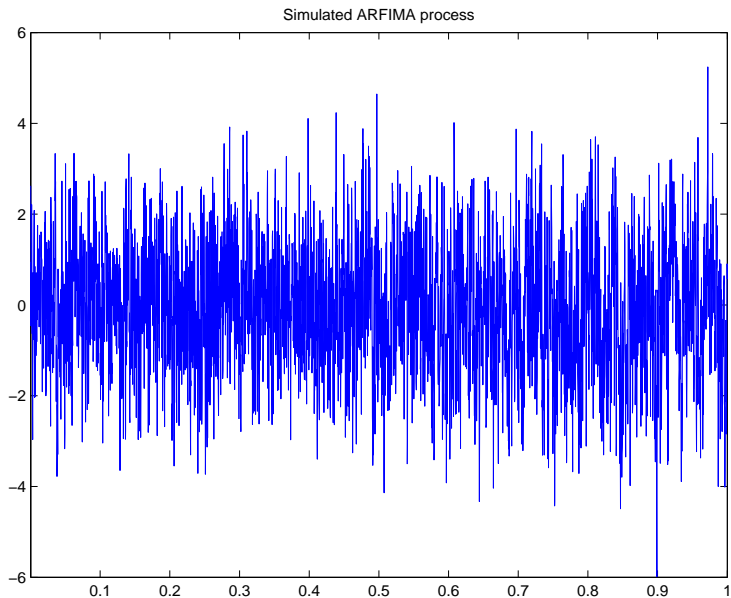}
\caption{A simulated tvARFIMA(1,$d$,0) of length $T=2^{12}$.}
\label{fig:raw_simu}
\end{figure}
We compute the local estimator $\widehat{\sigma}^2_{j,T}(u)$ defined
in~(\ref{eq:EstimatorWeights}) with $\{\gamma_{j,T}(k)\}$ given by the kernel
weights on the one hand and the recursive weights on the other hand, for
$j=1,2,\dots,5$ with a bandwidth $\band_T=0.25$. For the kernel weight we took
the rectangle kernel $K=\1_{[-1/2,1/2]}$.  The obtained local
scalograms $\widehat{\sigma}^2_{j,T}(u)$ of the local wavelet spectrum
$\sigma^2_j(u)$, $j=1,2,\dots,5$, $u\in[0,1]$ are represented in the lower
parts of Figures~\ref{fig:estim_simu_two_sided} and~\ref{fig:estim_simu_rec},
respectively, with a $y$-axis in a logarithmic scale.
\begin{figure}[htbp]
\centering
\includegraphics[width=0.8\textwidth]{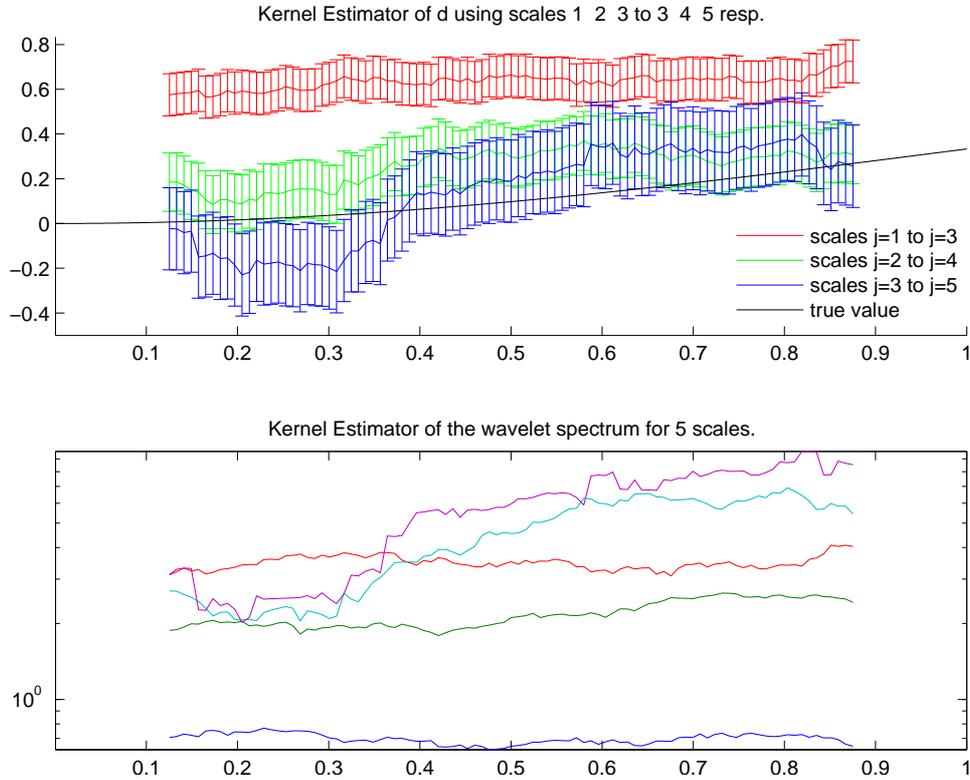}
\caption{Local estimates as functions of $u\in[0,1]$ for the simulated tvARFIMA(1,$d$,0)  using a two-sided rectangular
  kernel. Top: $\hat{d}_T(L;u)$ using scales $j=1,2,3$ to $3,4,5$ (respectively in blue, green and red) and the true value
  $d(u)$ (in thin black). Bottom: $\widehat{\sigma}^2_{j,T}(u)$ for $j=1,\dots,5$.}
\label{fig:estim_simu_two_sided}
\end{figure}
\begin{figure}[htbp]
\centering
\includegraphics[width=0.8\textwidth]{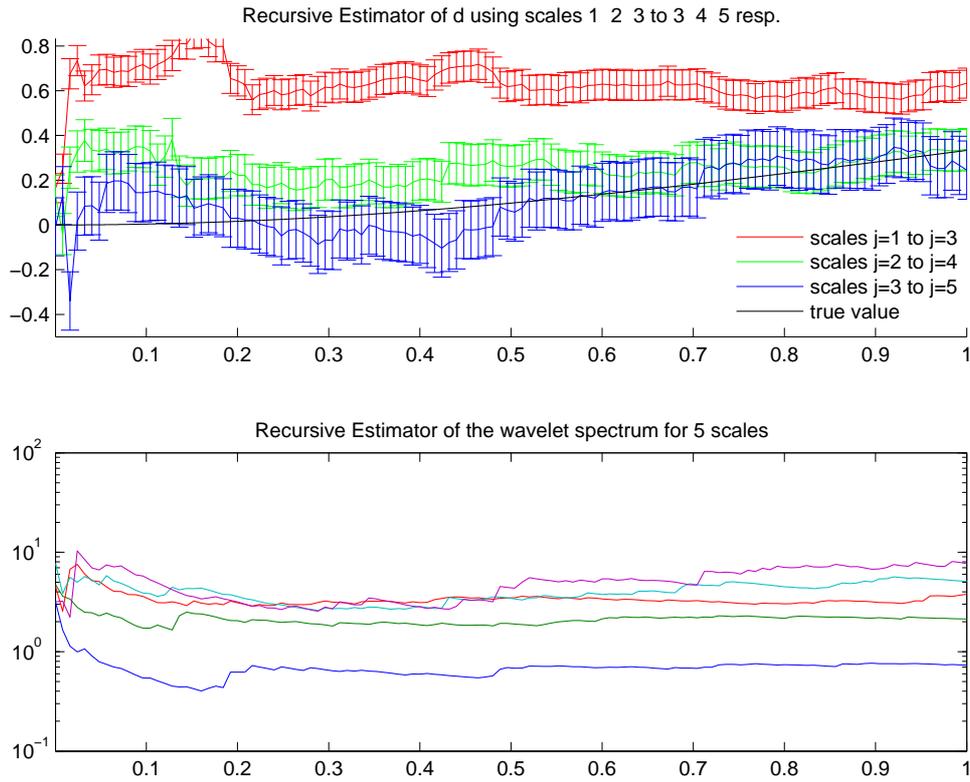}
\caption{Same as Figure~\ref{fig:estim_simu_two_sided} using a recursive estimator.}
\label{fig:estim_simu_rec}
\end{figure}
The five corresponding curves exhibit different variabilities, the larger $j$,
the larger the variability, which is in accordance with our theoretical
findings.  On the top of these two figures, we represented the true parameter
$d(u)$, $u\in[0,1]$ (plain black) and the corresponding estimators
$\widehat{d}_T(u)$ for three sets of scales, namely $j=1,2,3$ (blue line),
$j=2,3,4$ (green line) and $j=3,4,5$ (red line), which correspond to $L=1,2,3$,
respectively, and $\ell=2$ in the three sets of scales.  The displayed bars
centered at each estimate $\widehat{d}_T(u)$ represent 0.95 level confidence
intervals, based on the asymptotic distribution given by~(\ref{eq:CLTd}). Since
the asymptotic variance depends continuously on $d(u)$, we plug
$\widehat{d}_T(u)$ in to compute each interval length. Numerical computations
are done using the toolbox described in~\cite{fay-mouline-roueff-taqqu-2009}.
One can observe the difference between the two-side kernel estimator and the
recursive estimator. The former exhibits a uniform behavior along time with
border effects close to each boundaries of the interval $[0,1]$ (here we
dropped the values of $\widehat{d}_T(u)$ for $u<\band_T/2$ and $u >1-\band_T/2$
to avoid these border effects).  In contrast the latter exhibits a diminishing
then stabilizing variability along time. Thus it is better adapted for
estimating the right part of the interval.  It is interesting to note that the
choice of $L$ is crucial for this simulated example. This is due to the
presence of an autoregressive component leading to a strong positive
short-memory autocorrelation with a root close to the unit circle.  As a result
$d(u)$ is over estimated if a too low frequency band of scales is used (as in
the case $L=1$), which explains why the true value mostly lies out of the
corresponding confidence intervals.  On the other hand this bias diminishes
drastically as soon as $L\geq2$, but, for $L=3$, the confidence intervals are
larger since the normalizing term $\delta_{L,T}$ is larger.  This larger
variance is matched by the fact that the estimates are varying more widely
for $L=3$.
We made similar experiments for a tvARFIMA(0,$d$,0) process. In this case, this
bias is no longer observed for $L=1$.  We have also tried different values of
the bandwidth $\band_T$ which also influences the bias and the variability of
the estimates in the expected way. Finally we tested our procedures on longer
series to check the numerical tractability.  The computation of
$\widehat{\sigma}^2_{j,T}(u)$ from $X_{1,T},\dots,X_{T,T}$, with $T=2^{15}$
took less than 1 second for the kernel estimator and 7 seconds for the
recursive estimator with a 3.00GHz CPU. We note that the recursive version is
about ten times slower than the kernel estimator. On the other hand the
recursive estimator is adapted to \emph{online} computation, that is,
$\widehat{\sigma}^2_{j,T}(t)$ can be computed in a recursive fashion for each
new available observation $X_{t,T}$.

\subsection{Real data sets}

We now use real data sets made of a sample of realized log volatility of the
YEN versus USD exchange rate between June 1986 and September 2004. The realized
log volatility is represented in Figure~\ref{fig:raw_yen}.
\begin{figure}[htbp]
\centering
\includegraphics[width=0.6\textwidth]{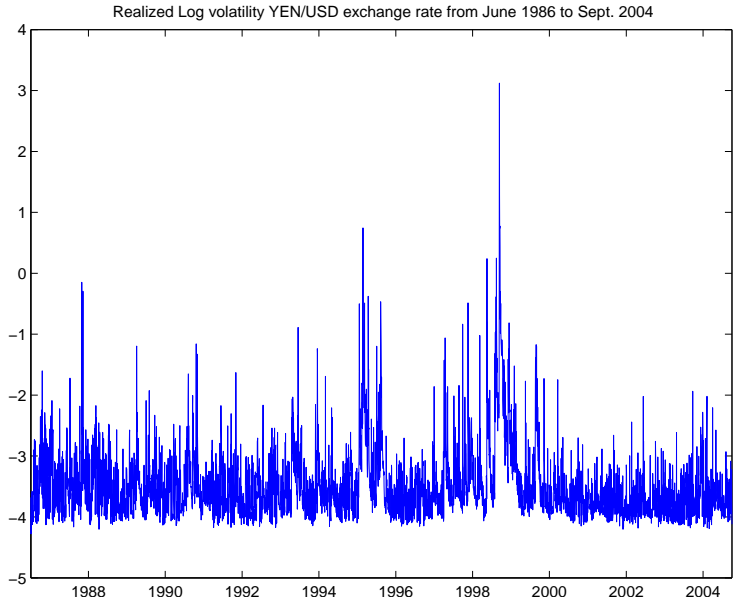} 
\caption{Realized log volatility of the YEN vs USD exchange rate from June 1986 to September 2004.}
\label{fig:raw_yen}
\end{figure}
The series length is $T=4470$, that is of the same order as the previously
simulated series ($T=2^{12}=4096$).  Viewing the simulated data as a benchmark,
we used approximately the same bandwidth parameter $\band_T=0.23$ and the same
sets of scales, namely $L=1,2,3$ with $\ell=3$ in the three cases. The
two-sided kernel estimators of the memory parameter are represented in the
upper part of Figure~\ref{fig:estim_yen_two_sided}.  As previously we also
display the corresponding local scalograms in the lower part of the same
figure.  We omit the results for the recursive estimator for brevity.  One can
observe that here as $L$ increases the estimates of $d(u)$ globally increases
which may indicate a negative bias at high frequencies. We only plot the
confidence intervals for the first 10 estimates for clarity. Indeed, in
contrast to the simulated case, they largely cover each other, which indicates
a less important bias. The green curve appears as a good compromise as in the
simulated example. It exhibits a 5 years periodic-like behavior, which seems to
indicate that the long memory parameter is not constant over time. This seems
to be in accordance with the findings of \cite{bauwens:wang:2008} who model
long-memory realized volatilities by a change of the model parameters from one
regime to another where the different regimes can be explained by the influence
of changing market factors (such as the Asian financial crisis of 1998).

\begin{figure}[htbp]
\centering
\includegraphics[width=0.8\textwidth]{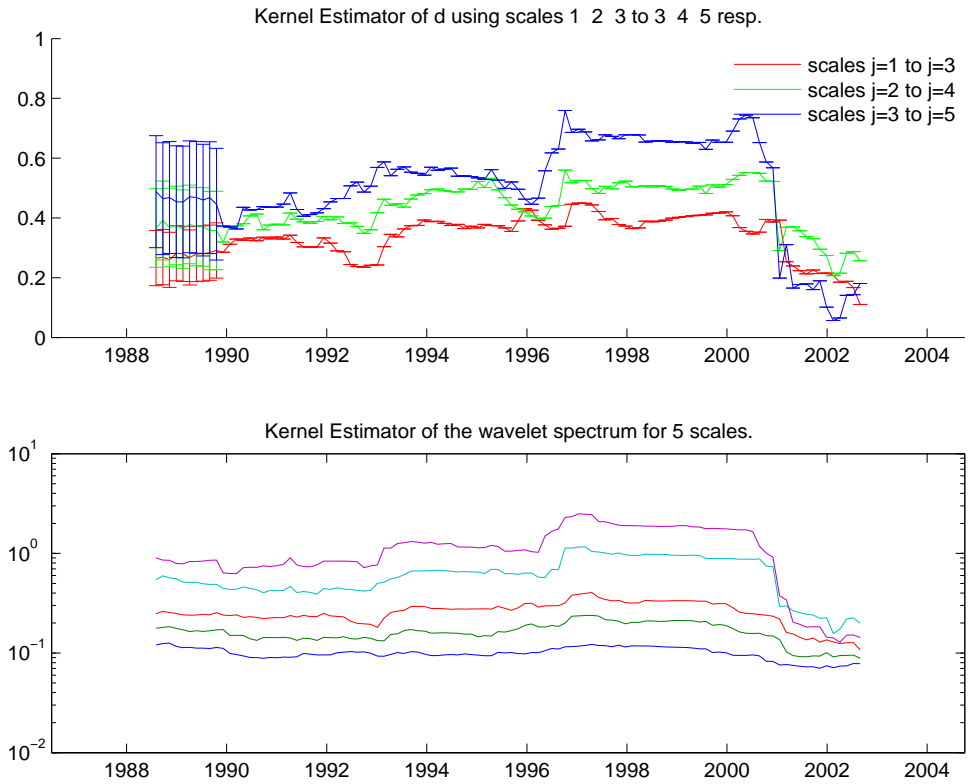}
\caption{Same as Figure~\ref{fig:estim_simu_two_sided} for the YEN vs USD exchange rate realized log volatility.}
\label{fig:estim_yen_two_sided}
\end{figure}

\section{Conclusion}\label{sec:6}

In this paper we have delivered a semi-parametric, hence fairly general,
approach for estimating the time-varying long-memory parameter $d(u)$ of a
locally stationary process (or stationary increment process). Apart from
modelling the singularity at zero frequency by the curve $d(u)$, we do not need
to model the time varying spectrum of the remaining part explicitly. Using a
wavelet log-regression estimator, already shown to be well-performing in the
stationary situation, continues to work well due to a localization of the
wavelet scalograms across time within each scale.

The development of our approach is based on a weakly stationary approximation at each
given time point $u$. As in the stationary case, due to the generality of our
semi-parametric spectral density not to be depending on only a finite number of
parameters (as in \cite{beran:2009}, e.g.), we need to concentrate our
attention to well estimating around frequency zero (where the amount of the
long-memory effect measured by $d$ is visible). So a slightly subtle choice of
considered scales for the log-regression has to be done: asymptotically we need
that our estimator involves more and more frequencies (i.e. scales) but with a
\emph{maximal frequency} tending to zero.  In the wavelet domain, this means
that the lowest scale used in the estimator will be chosen so that i) the
number of wavelet coefficients used in the estimator tends to infinity and ii)
this lowest scale itself tends (slowly) to infinity.
  


Simulations have shown that our estimator performs reasonably well beyond being attractive from the point of view of asymptotic theory. 
In our real data analysis example, we adopt the approach of \cite{bauwens:wang:2008} and of \cite{granger:ding:engle:1993}
to assume that realized volatilities of some exchange rates follow a long-memory model. We make the interesting observation
that for the observed series the long-memory parameter can clearly not be considered to be constant over time - which
suggests that in explaining the persistent correlation in this exchange data there are certainly periods of stronger
persistence followed by periods of weaker persistence.


\section*{Acknowledgements}

We acknowledge financial support from the TSIFIQ project of the ``Fondation Telecom'', from the  
IAP research network grant no P6/03 of the Belgian government (Belgian Science
Policy), and from the ``Projet d'Actions de Recherche Concert\'ees'' no 07/12-002 of the ``Communaut\'e
fran\c{c}aise de Belgique'', granted by the ``Acad\'emie universitaire Louvain''.

We also thank S.-H. Wang and L. Bauwens for providing the data example of Section 5.2 and for their helpful comments related
to this analysis. Finally we would like to thank two anonymous referees for their helpful comments in order to improve our paper.

\appendix

\section{Postponed proofs}
\begin{proof}[of Proposition~\ref{prop:local-scalogram}]
By \cite[Proposition~3]{moulines:roueff:taqqu:2007:jtsa}, there is a constant $C_1$ such that,
for all $j\geq0$ and all $\lambda\in[-\pi,\pi]$, 
\begin{equation}
\label{eq:fjBound}
|H_j( \lambda)|
 \leq  C_3\, 2^{j/2} \, |2^j\lambda|^M\,(1+2^j|\lambda|)^{-\alpha-M}\;.
\end{equation}
Applying~(\ref{eq:LocStatProcSpectralExt}),~(\ref{eq:tangentProcSpectral}),~(\ref{eq:wavcoeffLocStat})
and~(\ref{eq:wavcoeffStat}), we get, for any $u\in\R$, $j\geq0$ and $k\in\{0,\dots,T_j-1\}$,
\begin{equation}
  \label{eq:decompWaveletCoeff}
  W_{j,k;T}=W_{j,k}(u)+ R_{j,k}(u;T)\;,
\end{equation}
where
$$
R_{j,k}(u;T)=
\int_{-\pi}^\pi
\sum_{s\in\Z} \tilde{h}_{j,s}\left[A^0_{2^jk-s,T}(\lambda)-A(u;\lambda)\right]\rme^{\rmi\lambda(2^jk-s)}\;\rmd Z(\lambda)\;. 
$$
The main approximation result consists in bounding
$$
S_j(u;T)=\sum_{k=0}^{T_j-1}\gamma_{j,T}(k) R_{j,k}^2(u;T)
$$
and
$$
D_j(u;T)=\sum_{k=0}^{T_j-1}\gamma_{j,T}(k) W_{j,k}(u)R_{j,k}(u;T)\;.
$$
In the following $C$ denotes some multiplicative constant.
Using~(\ref{eq:AssumAtimeSmooth2}),~(\ref{eq:AssumAtimeSmooth}), and~(\ref{eq:AbsBoundWaveletfilter}) in Lemma~\ref{lem:filtre}, we have
$$
\left|\sum_{s\in\Z} \tilde{h}_{j,s}\left[A^0_{2^jk-s,T}(\lambda)-A(u;\lambda)\right]\rme^{\rmi\lambda(2^jk-s)}\right|\leq
C\;2^{jp}\;|\lambda|^{-D}\; \left\{2^{j/2}\left|2^jk/T-u\right|+2^{3j/2}/T\right\}\; .
$$
Recall that $D$ denotes an exponent less than $1/2$ which appears in the
Conditions~(\ref{eq:AssumAtimeSmooth2}) and~(\ref{eq:AssumAtimeSmooth}).  Using
$D<1/2$, we get
$$
\E\left[R_{j,k}^2(u;T)\right]\leq C\;2^{2jp}\;2^{3j}\;T^{-2}\;\left\{1+(k-Tu2^{-j})^2\right\} \; .
$$
Since we assumed $\alpha>1/2-d(u)$, we can take 
$D$ large enough so that $1-\alpha-d(u)<D<1/2$ (by adapting the constant $c$ appearing in the afore mentioned conditions).  
Hence we can assume in the following that
\begin{equation} 
  \label{eq:CondMandAlphaRemainder}
  M>d(u)-1/2 \quad \text{and}\quad d(u)+D+\alpha>1 \;.
\end{equation}
By~(\ref{eq:wavcoeffStatSpectral}) we also obtain that
$$
\left|\E\left[ W_{j,k}(u)R_{j,k}(u;T)\right]\right|\leq C\;2^{jp}\;\left\{2^{j/2}\left|2^jk/T-u\right|+2^{3j/2}/T\right\}\;
\int_{-\pi}^\pi\left|\tilde{H}_j(\lambda)A(u;\lambda)\right|\;|\lambda|^{-D}\;\rmd\lambda 
$$
Using~(\ref{eq:HjTilde}),~(\ref{eq:genLocSpecDens}),~(\ref{eq:genLocSpecDensSemiParam}), $f^*(u,\lambda)\leq C f^*(u,0)$
(by~(\ref{eq:genLocSpecDensSemiParamCond})), and~(\ref{eq:fjBound}), we further have
\begin{align*}
\int_{-\pi}^\pi\left|\tilde{H}_j(\lambda)A(u;\lambda)\right|\;|\lambda|^{-D}\;\rmd\lambda 
&\leq C\;\int_{-\pi}^\pi\left|H_j(\lambda)\right|\ \sqrt{f(u,\lambda)}\;|\lambda|^{-D}\;\rmd\lambda  \\
&\leq C\;\sqrt{f^*(u,0)}\;2^{j(d(u)+D-1/2)} \;,
\end{align*}
where we used that $\int_{\R} |\xi|^{M-d(u)-D}\ (1+|\xi|)^{-\alpha-M}\rmd(\xi)<\infty$
by~(\ref{eq:CondMandAlphaRemainder}). 
The last displays provide simple bounds for the expectations of $S_j$ and $D_j$.

To bound their variance, we use 
 \cite[Theorem~2, page~34]{rosenblatt:1985}, which yields
\begin{multline*}
\mathrm{Cov}\left(R_{j,k}^2(u;T),R_{j,k'}^2(u;T)\right)=
2\mathrm{Cov}^2\left(R_{j,k}(u;T),R_{j,k'}(u;T)\right)\\
+\mathrm{Cum}
\left(R_{j,k}(u;T),R_{j,k}(u;T),R_{j,k'}(u;T),R_{j,k'}(u;T)\right)
\end{multline*}
and
\begin{multline*}
\mathrm{Cov}\left(W_{j,k}(u)R_{j,k}(u;T),W_{j,k'}(u)R_{j,k'}(u;T)\right)\\=
\mathrm{Cov}\left(W_{j,k}(u),W_{j,k'}(u)\right)\mathrm{Cov}\left(R_{j,k}(u;T),R_{j,k'}(u;T)\right)\\
+\mathrm{Cov}\left(R_{j,k}(u;T),W_{j,k'}(u)\right)
\mathrm{Cov}\left(W_{j,k}(u),R_{j,k'}(u;T)\right) \\
+\mathrm{Cum}
\left(W_{j,k}(u),R_{j,k}(u;T),W_{j,k'}(u),R_{j,k'}(u;T)\right)
\;.
\end{multline*}

Let us first provide bounds of 
$\E\left[R_{j,k}(u;T)R_{j,k'}(u;T)\right]$ and $\E\left[ W_{j,k}(u)R_{j,k'}(u;T)\right]$
for $k,k'=0,\dots,T_j-1$.
Proceeding as previously, using~(\ref{eq:CondMandAlphaRemainder}),
we get (in fact the cases above $k=k'$ are particular cases)
$$
\left|\E\left[R_{j,k}(u;T)R_{j,k'}(u;T)\right]\right|\leq 
C\;2^{2jp}\;2^{3j}\;T^{-2}\;\left\{1+|k-Tu2^{-j}|\right\}\left\{1+|k'-Tu2^{-j}|\right\} \; .
$$
and
$$
\left|\E\left[ W_{j,k}(u)R_{j,k'}(u;T)\right]\right|\leq C\;2^{jp}\;\sqrt{f^*(u;0)}\;
2^{j(d(u)+D-1/2)}\;\left\{2^{j/2}\left|2^jk'/T-u\right|+2^{3j/2}/T\right\}\; \;.
$$
Using~(\ref{eq:cumBound}), we further get
\begin{multline*}
\left|\mathrm{Cum}
\left(R_{j,k}(u;T),R_{j,k}(u;T),R_{j,k'}(u;T),R_{j,k'}(u;T)\right)\right|\\
\leq
C\;2^{4jp}\;2^{6j}\;T^{-4}\;\left\{1+(k-Tu2^{-j})^2\right\}\left\{1+(k'-Tu2^{-j})^2\right\} \; ,
\end{multline*}
and, denoting by $B_j(u)$ the variance of the (weakly stationary) process
$\{W_{j,k}(u),\;k\in\Z\}$,  
\begin{multline*}
\left|\mathrm{Cum}
\left(W_{j,k}(u),R_{j,k}(u;T),W_{j,k'}(u),R_{j,k'}(u;T)\right)\right|\\
\leq C\;B_j(u)\,2^{2jp}\;2^{3j}\;T^{-2}\;\left\{1+|k-Tu2^{-j}|\right\}\;
\left\{1+|k'-Tu2^{-j}|\right\} 
\end{multline*}
Gathering these bounds, we obtain the same bound for
$\mathrm{Var}^{1/2}\left(S_j(u;T)\right)$ and $\E\left[S_j(u;T)\right]$ and
thus, using the definition of $\Gamma$ in~(\ref{eq:GammaDefQ}),
\begin{equation}
  \label{eq:Sbound}
\left|\E\left[S_j^2(u;T)\right]\right|^{1/2}\leq C\;2^{2jp}\;
2^{3j}\;T^{-2}\;\{\Gamma_0(u;j,T)+\Gamma_2(u;j,T)\} \;.  
\end{equation}
For $D_j(u;T)$, we obtain
$$
\left|\E\left[D_j(u;T)\right]\right|\leq C\;2^{jp}\;\sqrt{f^*(u;0)}\;2^{j(d(u)+D-1/2)}\;
2^{3j/2}\;T^{-1}\;\{\Gamma_0(u;j,T)+\Gamma_1(u;j,T)\} \;.
$$
We then obtain that
$\mathrm{Var}^{1/2}\left(D_j(u;T)\right)$ is at most
$$
C\;2^{jp}\;2^{3j/2}\;T^{-1}
\;\{\Gamma_0(u;j,T)+\Gamma_1(u;j,T)\}
\left\{B_j^{1/2}(u)+\sqrt{f^*(u;0)}\;2^{j(d(u)+D-1/2)}\right\} \;.
$$
Observe that by \cite[Theorem~1]{moulines:roueff:taqqu:2007:jtsa} we have, since
$M>d(u)-1/2$ and $\alpha>1/2-d(u)$, $B_j(u)\leq C\;f^*(u;0)\; 2^{2d(u)j}$. Hence, since 
$D<1/2$,   
\begin{equation}
  \label{eq:Dbound}
\left|\E\left[D_j^2(u;T)\right]\right|^{1/2}  \leq C\;2^{jp}\;\sqrt{f^*(u;0)}\;2^{j(3/2+d(u))}\;T^{-1}
\;\{\Gamma_0(u;j,T)+\Gamma_1(u;j,T)\} \;.
\end{equation}
By~(\ref{eq:EstimatorWeights}) and~(\ref{eq:decompWaveletCoeff}), we have
\begin{equation}
  \label{eq:StatApproxScalo}
  \widehat{\sigma}^2_{j,T}(u)=\widetilde{\sigma}^2_{j,T}(u)+S_j(u;T)+D_j(u;T) \; ,
\end{equation}
where $\widetilde{\sigma}^2_{j,T}(u)$ is defined in~(\ref{eq:tangentScalogram}).  
The bound~(\ref{eq:ScalApproxResult}) now follows from~(\ref{eq:Sbound}),~(\ref{eq:Dbound}),~(\ref{eq:StatApproxScalo}) and
Assumption~\ref{assump:weights}~(\ref{item:Locassump}). 
\end{proof}

\begin{proof}[of Theorem~\ref{theo:rate-param-scalo}]
By~(\ref{eq:defLocWavSpec}) and~(\ref{eq:normWeights}),
\begin{equation}
  \label{eq:SigmatildeMean}
\E\left[\widetilde{\sigma}^2_{j,T}(u)\right]=\E\left[W_{j,k}^2(u)\right]=\sigma^2_j(u)\;.
\end{equation}
Since the wavelet coefficients~(\ref{eq:wavcoeffStat}) are those of a
weakly stationary process, their behavior at large scales ($j\to\infty$) can be
studied using \cite[Theorem~1]{moulines:roueff:taqqu:2007:jtsa}.
By~\cite[Theorem~1]{moulines:roueff:taqqu:2007:jtsa}, since we assumed
(\ref{eq:genLocSpecDensSemiParamCond}) and $M>d(u)-1/2$ and
$\alpha>(1+\beta)/2-d(u)$, we obtain~(\ref{eq:BiasControl}).  In the following
we denote
$$
K^*_u=f^*(u,0)\constantePsiD(d(u))\;.
$$
We now provide a bound for
$$
\mathrm{Var}\left(\widetilde{\sigma}^2_{j,T}(u)\right)=
\sum_{k,k'=0}^{T_j-1}\gamma_{j,T}(k)\gamma_{j,T}(k')\mathrm{Cov}\left(W_{j,k}^2(u),W_{j,k'}^2(u)\right)=V_1+V_2\;,
$$
where the decomposition in $V_1+V_2$ follows from that of 
$\mathrm{Cov}\left(W_{j,k}^2(u),W_{j,k'}^2(u)\right)$ in
$$
2\mathrm{Cov}^2\left(W_{j,k}(u),W_{j,k'}(u)\right)+
\mathrm{Cum}\left(W_{j,k}(u),W_{j,k}(u),W_{j,k'}(u),W_{j,k'}(u)\right)\;.
$$
We easily obtain that
\begin{equation}
  \label{eq:V1exp}
V_1=2\int_{-\pi}^\pi\left|\Phi_{j,T}(\lambda;0,0)\right|^2\;\bD_j^{\star2}(\lambda)\;\rmd\lambda\;,  
\end{equation}
where $\bD_j$ denotes the spectral density of the weakly stationary process
$\{W_{j,k}\;,k\in\Z\}$, $\Phi_{j,T}$ is defined in~(\ref{eq:defweightsFourier})
and, for any $(2\pi)$-periodic function $g$, $g^{\star2}=g\star
g(\lambda)=\int_{-\pi}^\pi g(\lambda-\xi)g(\xi)\rmd \xi$. Moreover,
applying~(\ref{eq:cum-spectral}) with~(\ref{eq:wavcoeffStatSpectral})
and~(\ref{eq:defweightsFourier}), we get that $V_2$ can be expressed as
$$
\int_{[-\pi,\pi]^4}
\prod_{k=1}^4\left[\tilde{H}_j(\lambda_k)A(u;\lambda_k)\right]
\Phi_{j,T}(2^j(\lambda_1+\lambda_2);0,0)\Phi_{j,T}(2^j(\lambda_3+\lambda_4);0,0)
\hat{\kappa}_4(\lambda)\;\rmd\mu(\lambda)\;.
$$
Hence, bounding $\hat{\kappa}_4$, using~(\ref{eq:muDef}) and setting
$\lambda=\lambda_1+\lambda_2$, we have
\begin{equation}\label{eq:V2bound}
|V_2|\leq c_4\int_{-\pi}^\pi\left|\aleph_j^{\star2}(\lambda)\right|^2\;
\left|\Phi_{j,T}(2^j\lambda;0,0)\right|^2\;\rmd\lambda\;,
\end{equation}
where we set $\aleph_j(\lambda)=\tilde{H}_j(\lambda)\ A(u;\lambda)$. Observe
that $\|\aleph_j^{\star2}\|_\infty\leq\|\aleph_j\|_2^2=\|\bD_j\|_1$ and
$\|\bD_j^{\star2}\|_\infty\leq\|\bD_j\|_2^2$.  Now by
\cite[Theorem~1]{moulines:roueff:taqqu:2007:jtsa} we have, since $M\geq d(u)$
and $\alpha>1/2-d(u)$, $\|\bD_j\|_\infty= O\left(2^{2jd(u)}\right)$ (the
constants depend on $f^*(u;\cdot)$ only), which implies bounds of the same
order for $\|\bD_j\|_1$ and $\|\bD_j\|_2$.
Using~(\ref{eq:V1exp}),~(\ref{eq:V2bound}),
Assumption~\ref{assump:weights}(ii) with $i=i'=v=v'=0$ and observing that, by
$(2\pi)$ periodicity of $|\Phi_{j,T}(\lambda;0,0)|$, 
$$
\int_{-\pi}^\pi
\left|\Phi_{j,T}(2^j\lambda;0,0)\right|^2\;\rmd\lambda
=\int_{-\pi}^\pi\left|\Phi_{j,T}(\lambda;0,0)\right|^2\;\rmd\lambda\;,
$$
we finally get that
\begin{equation}
  \label{eq:VarTildeBound}
  \mathrm{Var}\left(\widetilde{\sigma}^2_{j,T}(u)\right)=O(2^{4jd(u)}\delta_{j,T}) \; .
\end{equation}
  Using~(\ref{eq:StatApproxScalo}) and~(\ref{eq:SigmatildeMean}),
  $\E\left[(\widehat{\sigma}^2_{j,T}(u)-K^*_u\;2^{2jd(u)})^2\right]$ is at most
\begin{multline*}
 C\;\left\{ 
\mathrm{Var}\left(\widetilde{\sigma}^2_{j,T}(u)\right)+\E\left[S_j^2(u;T)\right]+\E\left[D_j^2(u;T)\right] \right\}+O\left(2^{2
  (2d(u)-\beta) j}\right)\\
= O\left(2^{4jd(u)}\delta_{j,T}+2^{(6+4p)j}T^{-4} \delta_{j,T}^{-4} + 2^{(3+2p+2d(u))j}T^{-2} \delta_{j,T}^{-2}+2^{2 (2d(u)-\beta) j}\right)\;.
\end{multline*}
where we used~(\ref{eq:VarTildeBound}),~(\ref{eq:Sbound}),~(\ref{eq:Dbound}) and~(\ref{eq:NormalizedWeightsSums}).
Using~(\ref{eq:condConsistency}), the last display gives~(\ref{eq:ratesOp}).
\end{proof}

\begin{proof}[of Theorem~\ref{thm:clt-tang-scal}]
  Under the set of Assumptions (a), the proof
  immediately follows from~\cite[Theorem~2]{roueff-taqqu-2009b}. We now
  consider the  set of Assumptions (b). In this case, we rely on the Gaussian
  assumption.  The proof follows the lines
  of~\cite[Theorem~2]{moulines:roueff:taqqu:2007:fractals}, in which the
  stationary case is considered, \emph{i.e.} $\gamma_{j,T}(u)=1$.  We first
  observe that, for any $\mu=[\mu_0\,\,\dots\,\,\mu_{\ell}]^T\in\R^{\ell+1}$,
  we may write
$$
\mu^T\widetilde{S}_L(u)=\xi_L^T\Delta_L\xi_L\;,
$$
where $\xi_L$ is a Gaussian vector with entries 
$\left(W_{L+i,k}(u)\right)_{0\leq i\leq\ell,\,0\leq k\leq T_{L+i}}$ 
and $\Delta_L$ is the diagonal matrix with diagonal entries 
$\left(2^{-2Ld(u)}\delta_{L,T}^{-1/2}\mu_i\gamma_{L+i,T}(u)\right)_{0\leq i\leq\ell,\,0\leq k\leq T_{L+i}}$.
We may thus apply \cite[Lemma~12]{moulines-roueff-taqqu-2008}.

To obtain~(\ref{eq:S_jCLT}), it is thus sufficient to show that 
\begin{equation}\label{eq:S_jCLTcond1}
\rho(\Delta_L)\rho\left(\mathrm{Cov}(\xi_L)\right)\to0\;,
\end{equation}
where $\rho(A)$ denotes the spectral radius of $A$, and
\begin{equation}\label{eq:S_jCLTcond2}
\mathrm{Cov}(\mu^T\widetilde{S}_L(u))\to (f^*(u,0))^2\mu^T\Sigma\mu\;.
\end{equation}

We have, by~(\ref{eq:deltaDef}) and Assumption~\ref{assump:weights}(i),
$$
\rho(\Delta_L)\leq 2^{-2Ld(u)}\delta_{L,T}^{-1/2}
\max_{0\leq i\leq\ell}|\mu_i|\max_{0\leq i\leq\ell}\delta_{L+i,T}=o\left(2^{-2Ld(u)}\right)\;.
$$
Using~\cite[Lemma~6]{moulines:roueff:taqqu:2007:fractals},  
\cite[Lemma~11]{moulines-roueff-taqqu-2008} and that $\bD_{L+i}$ is the spectral density of the process
$\{W_{L+i,k}(u),\,k\in\Z\}$, we have 
$$
\rho\left(\mathrm{Cov}(\xi_L)\right)\leq
\sum_{i=0}^\ell\rho\left(\mathrm{Cov}([W_{L+i,k}(u),\,k=0,\dots,T_{L+i}])\right)
\leq2\pi\sum_{i=0}^\ell\|\bD_{L+i}\|_\infty\;.
$$
By  \cite[Theorem~1]{moulines:roueff:taqqu:2007:jtsa}, since we assumed $M\geq d(u)$ and $\alpha>1/2-d(u)$, we have
$\|\bD_{L+i}\|_\infty=O(2^{2Ld(u)})$. This with the last two displays implies~(\ref{eq:S_jCLTcond1}). 

We now compute the asymptotic covariance matrix of $\widetilde{S}_L(u)$. 
Let $0\leq j'\leq j$. Using~(\ref{eq:calT}) and the Gaussian assumption, we have
\begin{align*}
\mathrm{Cov}\left(\widetilde{\sigma}^2_{j,T}(u),\widetilde{\sigma}^2_{j',T}(u)\right)&=
\sum_{k=0}^{T_j-1}\sum_{k'=0}^{T_{j'}-1}
\gamma_{j,T}(k)\gamma_{j',T}(k')\mathrm{Cov}\left(W_{j,k}^2(u),W_{j',k'}^2(u)\right)\\
&\hspace{-2.5cm}=2\sum_{v=0}^{2^{j-j'}-1}\sum_{k=0}^{T_j-1}\sum_{l\in\calT_j(j-j',v)}
\gamma_{j,T}(k)\gamma_{j',T}(l2^{j-j'}+v)\mathrm{Cov}^2\left(W_{j,k}(u),W_{j',l2^{j-j'}+v}(u)\right)\;.
\end{align*}
Using \cite[Corollary~1]{moulines:roueff:taqqu:2007:jtsa}, we have 
$$
\mathrm{Cov}\left(W_{j,k}(u),W_{j',l2^{j-j'}+v}(u)\right)=
\int_{-\pi}^\pi\bD_{j,j-j',v}(\lambda)\rme^{\rmi\lambda(k-l)}\,\rmd\lambda \;,
$$
where $\bD_{j,j-j'}=[\bD_{j,j-j',v}]_{v=0,\dots,2^{j-j'}-1}$ denotes the $2^{j-j'}$-dimensional cross-spectral density
between $W_{j,k}(u)$ and $[W_{j',l2^{j-j'}+v}(u)]_{v=0,\dots,2^{j-j'}-1}$. It follows from the last two displays and~(\ref{eq:defweightsFourier}) that 
\begin{align*}
\mathrm{Cov}\left(\widetilde{\sigma}^2_{j,T}(u),\widetilde{\sigma}^2_{j',T}(u)\right)
&=2\sum_{v=0}^{2^{j-j'}-1}\int_{-\pi}^\pi\Phi_{j,T}(\lambda;0,0)\overline{\Phi}_{j,T}(\lambda;j-j',v)
\;\widetilde{\bD}_{j,j-j',v}(\lambda)\;\rmd\lambda\;,
\end{align*}
where
$$
\widetilde{\bD}_{j,j-j',v}(\lambda)=\int_{-\pi}^\pi \bD_{j,j-j',v}(\xi)\overline{\bD}_{j,j-j',v}(\xi-\lambda)\rmd\xi\;.
$$
By \cite[Theorem~1(b)]{moulines:roueff:taqqu:2007:jtsa}, since we assumed $M\geq d(u)$ and $\alpha>1/2-d(u)$,
using~(\ref{eq:genLocSpecDensSemiParamCond}), we have, for $j=L+i$ and $j'=L+i'$ with $i'\leq i$ fixed,
$$
\|2^{-2d(u) j}\bD_{j,j-j'}-f^*(u,0)\bD_{\infty,i-i'}(\cdot;d(u))\|_\infty\to 0 \; .
$$
The last three displays,~(\ref{eq:Sigmadef}), Lemma~\ref{lem:CovLim} and Assumption~\ref{assump:weights} yield
$$
\mathrm{Cov}\left(\widetilde{S}_{L,T}(u)\right)\to (f^*(u,0))^2\Sigma \;,
$$
and hence~(\ref{eq:S_jCLTcond2}).
\end{proof}

\begin{proof}[of Theorem~\ref{theo:clt-param-est}]
We first show that
\begin{equation}\label{eq:hatS_jAsymNorm}
\delta_{L,T}^{-1/2}
\left(\;2^{-2Ld(u)}
\left[  \begin{array}[]{c}
\widehat{\sigma}^2_{L,T}(u)\\
\widehat{\sigma}^2_{L+1,T}(u)\\
\vdots\\
\widehat{\sigma}^2_{L+\ell,T}(u)
  \end{array}
\right]- K^*_u \left[
\begin{array}[]{c}
1\\
2^{2d(u)}\\
\vdots\\
2^{2\ell d(u)}
\end{array}
\right]\right)\Rightarrow \calN\left(0,(f^*(u,0))^2\Sigma(u)\right) \;.
\end{equation}
Observe that the weak convergence~(\ref{eq:hatS_jAsymNorm}) is the same as~(\ref{eq:hatS_jCLT}) except for the centering term.
Relation~(\ref{eq:hatS_jCLT}) is valid since the assumptions of Corollary~\ref{cor:clt-local-scalogram} hold.
Applying $\delta_{L,T}\to0$, Proposition~\ref{prop:local-scalogram} and the left-hand side condition of~(\ref{eq:approxScalandBiasNeglict}), we have that,
for any $j=L+i$ with a fixed $i=0,\dots,\ell$, 
$$
\delta_{L,T}^{-1/2}2^{-2Ld(u)}
\E\left[\widehat{\sigma}^2_{j,T}(u)\right]=\delta_{L,T}^{-1/2}2^{-2Ld(u)}\E\left[\widetilde{\sigma}^2_{j,T}(u)\right]+o(1)\;.
$$
The bias control~(\ref{eq:BiasControl}) and the right-hand side condition of~(\ref{eq:approxScalandBiasNeglict}) then imply
$$
\delta_{L,T}^{-1/2}2^{-2Ld(u)}
\E\left[\widehat{\sigma}^2_{j,T}(u)\right]=\delta_{L,T}^{-1/2}f^*(u,0)\constantePsiD(d(u))2^{2id(u)}+o(1)\;.
$$
This, with~(\ref{eq:hatS_jCLT}) gives the weak convergence~(\ref{eq:hatS_jAsymNorm})\;.

The convergence~(\ref{eq:CLTd}) now follows from~(\ref{eq:hatS_jAsymNorm}) by applying the $\delta$-method as in
\cite[Proposition~3]{moulines:roueff:taqqu:2007:fractals}. Indeed, define 
$$
g(x)=\sum_{i=0}^\ell w_i\log(x_i)\quad\text{for all}\quad x=[x_0\,\,\dots\,\,x_\ell]^T\;.
$$
Observe that, by~(\ref{eq:propertyw}) and~(\ref{eq:definition:estimator:regression}), we have 
$$
g\left(2^{-2Ld(u)}[\widehat{\sigma}^2_{L,T}(u)\,\,\widehat{\sigma}^2_{L+1,T}(u)\,\,\dots\,\,\widehat{\sigma}^2_{L+\ell,T}(u)]^T\right)=
\widehat{d}_T(L)
$$
and
$$
g\left(f^*(u,0)\constantePsiD(d(u))[1\,\,2^{2d(u)}\,\,\dots\,\,2^{2\ell d(u)}]^T\right)= d(u)\;.
$$
Thus~(\ref{eq:CLTd}) follows from~(\ref{eq:hatS_jAsymNorm}) by computing the gradient of $g$ at the centering term,
$$
\nabla g\left(f^*(u,0)\constantePsiD(d(u))[1\,\,2^{2d(u)}\,\,\dots\,\,2^{2\ell d(u)}]^T\right)= 
\frac{[w_0\,\,w_12^{-2d(u)}\,\,\dots\,\,w_\ell2^{-2\ell d(u)}]^T}{f^*(u,0)\constantePsiD(d(u))}\;.
$$
\end{proof}

\section{Technical lemmas}\label{sec:technical-lemmas}
\begin{lem}\label{lem:filtre}
Assume~\ref{item:Wreg}--\ref{item:MIM}. 
Let $h_{j,\cdot}$ the wavelet detail filter at scale index $j$ and $\tilde{h}_{j,\cdot}$ any factorization of it by
$\diffop^p$ with $p\in\{0,\dots, M\}$. Then we have 
  \begin{equation}
    \label{eq:AbsBoundWaveletfilter}
\sum_{s\in\Z}|\tilde{h}_{j,s}| \leq C \; 2^{j(p+1/2)}\quad\text{and}\quad
\sum_{s\in\Z}(1+|s|) \; |\tilde{h}_{j,s}| \leq C \; 2^{j(p+3/2)}\;.    
  \end{equation}
\end{lem}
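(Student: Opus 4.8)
The plan is to deduce both estimates in~\eqref{eq:AbsBoundWaveletfilter} from three elementary properties of the detail filter $h_{j,\cdot}$ together with the explicit way $\tilde h_{j,\cdot}$ is obtained from it. First I would record the size and support of $h_{j,\cdot}$: since $\phi,\psi$ are compactly supported by~\ref{item:Wreg}, the substitution $u=2^jv$ in the defining formula gives $h_{j,t}=2^{j/2}\int\phi(2^jv+t)\psi(v)\,\rmd v$, which is nonzero only when $t$ belongs to a fixed interval dilated by $2^j$ --- hence $h_{j,\cdot}$ is supported on $O(2^j)$ indices, all of modulus $O(2^j)$ --- and, using that $\psi$ is bounded (being the inverse Fourier transform of the integrable function $\hat\psi$, cf.~\ref{item:psiHat}) while $\phi\in L^1$, one gets $|h_{j,t}|\le 2^{j/2}\|\psi\|_\infty\int|\phi(2^jv+t)|\,\rmd v=2^{-j/2}\|\psi\|_\infty\|\phi\|_1$. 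This already yields~\eqref{eq:AbsBoundWaveletfilter} when $p=0$ (where $\tilde h_{j,\cdot}=h_{j,\cdot}$), by bounding each of $\sum_s|\tilde h_{j,s}|$ and $\sum_s(1+|s|)|\tilde h_{j,s}|$ by the number of nonzero terms times the largest one (times the largest index, in the second case).

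Second, I would check that $h_{j,\cdot}$ has $M$ vanishing discrete moments, i.e. $\sum_t t^m h_{j,t}=0$ for $m=0,\dots,M-1$, equivalently that $H_j$ in~\eqref{eq:HjDef} has a zero of order $M$ at $\lambda=0$. This is where~\ref{item:MVM} and~\ref{item:MIM} enter: expanding $\sum_t t^m h_{j,t}$ through the definition of $h_{j,t}$ and using that $\sum_k k^m\phi(\cdot-k)$ is a polynomial of degree $m$ (condition~\ref{item:MIM}) reduces the computation to the integral of $\psi$ against a polynomial of degree $m\le M-1$, which vanishes by~\ref{item:MVM}. Since $M\ge p$, this allows one to realize $\tilde h_{j,\cdot}$, whose transfer function is $\tilde H_j(\lambda)=H_j(\lambda)(1-\rme^{\rmi\lambda})^{-p}$ by~\eqref{eq:HjTilde}, as the result of applying $p$ successive partial summations to $h_{j,\cdot}$. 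The key bookkeeping fact is that one partial summation of a finitely supported filter $g$ with $\sum_t g_t=0$ produces a filter $\tilde g$ whose support is contained in that of $g$ (so still of size and modulus $O(2^j)$), with $\|\tilde g\|_\infty\le\|g\|_1\le C2^j\|g\|_\infty$, and with one fewer vanishing moment; iterating $p$ times is legitimate exactly because $M\ge p$, and produces $\sup_t|\tilde h_{j,t}|\le(C2^j)^p\sup_t|h_{j,t}|\le C\,2^{j(p-1/2)}$ with $\tilde h_{j,\cdot}$ supported on $O(2^j)$ indices of modulus $O(2^j)$.

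The two bounds then follow at once: $\sum_s|\tilde h_{j,s}|$ is at most the number of nonzero entries times their maximum, $\le C2^j\cdot C2^{j(p-1/2)}=C\,2^{j(p+1/2)}$, and $\sum_s(1+|s|)|\tilde h_{j,s}|\le(1+\max_{s\in\mathrm{supp}}|s|)\sum_s|\tilde h_{j,s}|\le C2^j\cdot C2^{j(p+1/2)}=C\,2^{j(p+3/2)}$. I expect the only genuinely delicate point to be the middle step: one must be careful that what is available after the substitution is the \emph{discrete} vanishing-moment property of $h_{j,\cdot}$ (not merely the continuous vanishing moments of $\psi$), and then track how these discrete moments are consumed by the successive partial summations so that the support stays of size $O(2^j)$ at each stage --- this is precisely what keeps the factor $2^{jp}$ (and not something larger) in front of $\|h_{j,\cdot}\|_\infty$, and hence produces the exponents $p+1/2$ and $p+3/2$.
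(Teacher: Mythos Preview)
The paper states Lemma~\ref{lem:filtre} without proof (it sits in the technical appendix between its statement and the next lemma with nothing in between), so there is nothing to compare against; your task is simply to supply a valid argument, and you have done so.

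Your proof is correct. Each of the three ingredients is sound: (i) the change of variable $u=2^jv$ in the defining integral for $h_{j,t}$ gives the support bound of $O(2^j)$ integers of modulus $O(2^j)$ and, via $\hat\psi\in L^1$ (from~\ref{item:psiHat} with $\alpha>1$) hence $\psi\in L^\infty$, the pointwise bound $|h_{j,t}|\le C\,2^{-j/2}$; (ii) the computation $\sum_t t^m h_{j,t}=0$ for $m\le M-1$ via~\ref{item:MIM} then~\ref{item:MVM} is exactly the standard verification that $H_j$ has a zero of order $M$ at $\lambda=0$; (iii) the iterated partial-summation step is the right mechanism: one inversion of $\diffop$ on a filter supported on $N$ points with vanishing zeroth moment keeps the support inside the same $N$ points and bounds the new sup-norm by the old $\ell^1$-norm $\le N\cdot(\text{old sup-norm})$, and $M\ge p$ guarantees the zeroth moment is available at each of the $p$ stages (including the borderline case $p=M$). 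Multiplying out gives $\sup_s|\tilde h_{j,s}|\le C\,2^{j(p-1/2)}$ on a support of size and radius $O(2^j)$, hence both inequalities in~\eqref{eq:AbsBoundWaveletfilter}.

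One small remark: the phrase ``any factorization'' in the lemma is harmless because among finitely supported filters the factorization $h_{j,\cdot}=\tilde h_{j,\cdot}\ast\diffop^p$ is unique, so bounding the particular $\tilde h_{j,\cdot}$ produced by your partial summations is enough. This is implicit in your write-up but worth saying explicitly.
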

\begin{lem}\label{lem:CovLim}
Suppose Assumption~\ref{assump:weights} holds. 
Let $i,i'\geq0$, $v\in\{0,\dots,2^{i}-1\}$ and $v'\in\{0,\dots,2^{i'}-1\}$.
Define, for any $(2\pi)$-periodic function $g$,
$$
I_T(g)=\delta_{j,T}^{-1}
\int_{-\pi}^\pi\Phi_{j,T}(\lambda;i,v)\overline{\Phi_{j,T}(\lambda;i',v')}\;g(\lambda)\,\rmd\lambda\;.
$$
Then the two following assertions hold.
\begin{enumerate}[(i)]
\item  \label{eq:unifContAsympVar}
 If $h\to g$ in $L^\infty([-\pi,\pi])$, then
$\displaystyle \sup_{T\geq0}\left|I_T(h)-I_T(g)\right|\to 0$.
\item    \label{eq:AsympVarLim}
  If $g\in L^\infty([-\pi,\pi])$ is continuous at zero, then, as $T\to\infty$,
$\displaystyle I_T(g)\to V(i,v;i',v')\, g(0)$.
\end{enumerate}
\end{lem}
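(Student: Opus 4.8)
The plan is to reduce both assertions to one elementary uniform bound combined with the three quantitative properties of the weights recorded in Assumption~\ref{assump:weights}, namely the $L^2$ convergence~(\ref{eq:VasympDef}) and the uniform smallness away from zero in~(\ref{item:neglicAssump}). The starting point is Cauchy--Schwarz on $[-\pi,\pi]$: for any $g\in L^\infty$,
\begin{equation*}
|I_T(g)|\le \|g\|_\infty\,\delta_{j,T}^{-1}\,\|\Phi_{j,T}(\cdot;i,v)\|_2\,\|\Phi_{j,T}(\cdot;i',v')\|_2\;.
\end{equation*}
Applying Assumption~\ref{assump:weights}(\ref{item:Vassump}) with $(i',v')$ replaced by $(i,v)$, and then with $(i,v)$ replaced by $(i',v')$, shows that $\delta_{j,T}^{-1}\|\Phi_{j,T}(\cdot;i,v)\|_2^2\to V(i,v;i,v)$ and $\delta_{j,T}^{-1}\|\Phi_{j,T}(\cdot;i',v')\|_2^2\to V(i',v';i',v')$, hence both sequences are bounded in $T$. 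Consequently there is a constant $C_0=C_0(i,v;i',v')$ with $|I_T(g)|\le C_0\|g\|_\infty$ for all $T$. Since $I_T$ is linear, assertion~(i) follows immediately: if $h$ converges to $g$ in $L^\infty([-\pi,\pi])$, then $\sup_{T\ge0}|I_T(h)-I_T(g)|=\sup_{T\ge0}|I_T(h-g)|\le C_0\|h-g\|_\infty\to0$.

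For assertion~(ii) I would write $g=g(0)+\tilde g$ with $\tilde g:=g-g(0)$, so that $I_T(g)=g(0)\,I_T(1)+I_T(\tilde g)$. Equation~(\ref{eq:VasympDef}) is exactly the statement that $I_T(1)\to V(i,v;i',v')$, so it remains to show $I_T(\tilde g)\to0$, using only that $\tilde g$ is bounded and continuous at zero with $\tilde g(0)=0$. Fix $\eta\in(0,\pi)$ and split the integral defining $I_T(\tilde g)$ over $\{|\lambda|\le\eta\}$ and $\{\eta\le|\lambda|\le\pi\}$. On the first region the Cauchy--Schwarz estimate above (applied with $\tilde g\mathbbm{1}_{[-\eta,\eta]}$ in place of $g$) bounds the contribution by $C_0\sup_{|\lambda|\le\eta}|\tilde g(\lambda)|$, uniformly in $T$. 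On the second region, bounding $|\tilde g|$ by $\|\tilde g\|_\infty$, pulling $\sup_{\eta\le|\lambda|\le\pi}|\Phi_{j,T}(\lambda;i,v)|$ out of the integral, and using $\|\Phi_{j,T}(\cdot;i',v')\|_1\le\sqrt{2\pi}\,\|\Phi_{j,T}(\cdot;i',v')\|_2$, the contribution is at most
\begin{equation*}
\sqrt{2\pi}\,\|\tilde g\|_\infty\left(\delta_{j,T}^{-1/2}\sup_{\eta\le|\lambda|\le\pi}|\Phi_{j,T}(\lambda;i,v)|\right)\left(\delta_{j,T}^{-1/2}\|\Phi_{j,T}(\cdot;i',v')\|_2\right)\;,
\end{equation*}
whose first parenthesis tends to $0$ by Assumption~\ref{assump:weights}(\ref{item:neglicAssump}) and whose second parenthesis stays bounded by the remark above; hence for each fixed $\eta$ this contribution vanishes as $T\to\infty$. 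Combining the two regions gives $\limsup_{T\to\infty}|I_T(\tilde g)|\le C_0\sup_{|\lambda|\le\eta}|\tilde g(\lambda)|$ for every $\eta\in(0,\pi)$; letting $\eta\to0$ and using continuity of $\tilde g$ at zero yields $I_T(\tilde g)\to0$, which completes the proof.

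The bookkeeping with Cauchy--Schwarz is routine; the genuinely delicate point is that the bounding constant $C_0$ must be uniform in $T$ (recall that $j$ itself varies with $T$), and this is precisely what Assumption~\ref{assump:weights}(\ref{item:Vassump}) provides by furnishing an honest limit rather than a mere bound. The second subtlety is that on the region away from zero the $L^2$ norms of the $\Phi_{j,T}$ are \emph{not} small, so one cannot use an $L^2$-type argument there and must instead invoke the sup-norm smallness condition~(\ref{item:neglicAssump}); the near/away-from-zero decomposition, with continuity of $g$ at zero used only on the near part, is the structural heart of the argument.
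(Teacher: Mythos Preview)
Your proof is correct and follows essentially the same approach as the paper: the Cauchy--Schwarz bound combined with Assumption~\ref{assump:weights}(\ref{item:Vassump}) for part~(i), and the near/away-from-zero split with Assumption~\ref{assump:weights}(\ref{item:neglicAssump}) controlling the far region for part~(ii). The only cosmetic difference is in the far-region estimate: the paper pulls out both $\Phi$ factors in sup-norm over $\{\eta\le|\lambda|\le\pi\}$ and integrates $|g-1|$, whereas you pull out only one $\Phi$ in sup-norm and control the other via its $L^1$ (hence $L^2$) norm; either variant works.
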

\begin{proof}
By linearity of $I_T$, we may take $g=0$ to prove Assertion~(\ref{eq:unifContAsympVar}).
We have, by the Cauchy-Schwarz inequality
$$
\left|I_T(h)\right|\leq\|h\|_\infty\left[\delta_{j,T}^{-1/2}\|\Phi_{j,T}(\cdot;i,v)\|_2\right]
\;\left[\delta_{j,T}^{-1/2}\|\Phi_{j,T}(\cdot;i',v')\|_2\right]\;.
$$ 
Using Assumption~\ref{assump:weights}(ii), the terms between brackets are
bounded independently of $j$ and we obtain~(\ref{eq:unifContAsympVar}).

We now prove~(\ref{eq:AsympVarLim}).
By linearity of $I_T$, we may assume $g(0)=1$. By Assumption~\ref{assump:weights}(ii), we have $I_T(1)\to V(i,v;i',v')$.  On
the other hand, we have, for any $\eta>0$
\begin{align*}
\left|I_T(g)-I_T(1)\right|&=\left|I_T((g-1)\1_{[-\eta,\eta]}+(g-1)\1_{[-\eta,\eta]^c})\right|\\
&\leq \left|I_T((g-1)\1_{[-\eta,\eta]})\right|+\left|I_T((g-1)\1_{[-\eta,\eta]^c})\right|\;.
\end{align*}
Observe that by continuity of $g$ at the origin, $\|(g-1)\1_{[-\eta,\eta]}\|_\infty\to0$ as $\eta\to\infty$. 
By~(\ref{eq:unifContAsympVar}), we get $\left|I_T((g-1)\1_{[-\eta,\eta]})\right|\to0$ as $\eta\to\infty$.
It thus only remains to show that $\left|I_T((g-1)\1_{[-\eta,\eta]^c})\right|\to0$ for any
$\eta>0$. This follows from the bound
$$
\left|I_T((g-1)\1_{[-\eta,\eta]^c})\right|\leq \|g-1\|_1 
\left[\delta_{j,T}^{-1/2} \sup_{\eta\leq|\lambda|\leq\pi}\left|\Phi_{j,T}(\lambda;i,v)\right|\right]
\left[\delta_{j,T}^{-1/2} \sup_{\eta\leq|\lambda|\leq\pi}\left|\Phi_{j,T}(\lambda;i',v')\right|\right]\;,
$$
and by applying Assumption~\ref{assump:weights}(iii).
\end{proof}

\begin{lem}\label{lem:lipshitz_with_log}
For any $a>0$ and $b>0$, there exists $c>0$ such that
$$
\left|z^\alpha-1\right|\leq c \,\{1+\log(|z|)\}\,\alpha\quad\text{for all}\quad\alpha\in[0,a],\,z\in\C\quad\text{with}\quad|z|\leq b\;.
$$
\end{lem}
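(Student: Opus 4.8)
The plan is to reduce the whole statement to one elementary estimate on the complex exponential. First I would fix the principal branch, writing $z^{\alpha}=\exp(\alpha\operatorname{Log}z)$ with $\operatorname{Log}z=\log\lvert z\rvert+\mathrm{i}\operatorname{Arg}z$ and $\lvert\operatorname{Arg}z\rvert\le\pi$ (for $\alpha=0$ the inequality is trivial, and if $z=0$ the right-hand side is $+\infty$, so I may assume $\alpha>0$ and $z\neq0$, the bound being read with $\lvert\log\lvert z\rvert\rvert$ on the right). Differentiating $t\mapsto z^{t\alpha}=\exp(t\alpha\operatorname{Log}z)$ gives $\partial_t z^{t\alpha}=\alpha(\operatorname{Log}z)\,z^{t\alpha}$, hence
$$
z^{\alpha}-1=\alpha\,(\operatorname{Log}z)\int_0^1 z^{t\alpha}\,\rmd t\;,
$$
and therefore $\lvert z^{\alpha}-1\rvert\le\alpha\,\lvert\operatorname{Log}z\rvert\int_0^1\lvert z\rvert^{t\alpha}\,\rmd t$.

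Next I would bound the two factors separately. For the integral, since $\lvert z\rvert\le b$ and $0\le\alpha\le a$, the integrand is either bounded by $1$ (if $\lvert z\rvert\le1$) or increasing up to $\lvert z\rvert^{\alpha}\le b^{a}$ (if $\lvert z\rvert>1$, which forces $b>1$), so $\int_0^1\lvert z\rvert^{t\alpha}\,\rmd t\le\max(1,b^{a})=:C_0$. For the logarithmic factor, $\lvert\operatorname{Log}z\rvert\le\lvert\log\lvert z\rvert\rvert+\lvert\operatorname{Arg}z\rvert\le\lvert\log\lvert z\rvert\rvert+\pi\le\pi\bigl(1+\lvert\log\lvert z\rvert\rvert\bigr)$, using $\pi\ge1$. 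Combining, $\lvert z^{\alpha}-1\rvert\le\pi C_0\,\bigl(1+\lvert\log\lvert z\rvert\rvert\bigr)\,\alpha$, so one may take $c=\pi\max(1,b^{a})$, which depends only on $a$ and $b$ as required.

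There is no real obstacle here: the argument is a single line of calculus once the pieces are in place. The one point that deserves attention, and is the whole reason the statement has the shape it does, is the direction of the constraint on $\lvert z\rvert$: the estimate would collapse if $\lvert z\rvert$ were allowed to be arbitrarily large, since the factor $\lvert z\rvert^{t\alpha}$ would then blow up and could not be absorbed, whereas the genuinely singular regime $\lvert z\rvert\to0$ — precisely the situation met in the Examples, where $z=1-\rme^{-\rmi\lambda}$ with $\lambda\to0$ — is harmless for that factor and is exactly what the term $1+\lvert\log\lvert z\rvert\rvert$ on the right-hand side is there to control, at the cost of an arbitrarily small increase of the exponent $D$ when deducing~(\ref{eq:AssumAtimeSmooth}).
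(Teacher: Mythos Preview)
The paper states this lemma without proof, so there is nothing to compare against. Your argument is correct and is exactly the natural one: write $z^{\alpha}-1=\alpha(\operatorname{Log}z)\int_0^1 z^{t\alpha}\,\rmd t$, bound $\int_0^1|z|^{t\alpha}\,\rmd t\le\max(1,b^{a})$ using $|z|\le b$ and $\alpha\le a$, and bound $|\operatorname{Log}z|\le\pi(1+|\log|z||)$. You are also right to read the right-hand side of the statement with $|\log|z||$ rather than $\log|z|$; as written the bound would be negative for $|z|<\rme^{-1}$, and the intended application (e.g.\ $z=1-\rme^{-\rmi\lambda}$ with $\lambda\to0$ in the tvFBM and tvARFIMA examples) is precisely the regime $|z|\to0$, where one needs the blow-up $|\log|z||\to\infty$ to be absorbed into the factor $|\lambda|^{-D}$ of~(\ref{eq:AssumAtimeSmooth}).
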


\begin{lem}\label{lem:two-sided-kernel}
Assume one of the following.
\begin{enumerate}[(K-1)]
\item\label{item:ind} $K=\1_{[-1/2,1/2]}$.
\item\label{item:comp} $K$ is compactly supported and $|\hat{K}(\xi)|=o(|\xi|^{-3/2})$ as $|\xi|\to\infty$, 
where $\hat{K}$ denotes the Fourier transform of $K$. 
\item\label{item:exp} $K$ is integrable, $\hat{K}$ has an exponential decay, \emph{i.e.} for some
  $c>0$, $|\hat{K}(\xi)|=O\left(\exp(-c|\xi|)\right)$ as 
  $|\xi|\to\infty$, $K(t)=O(|t|^{-p_0})$  as $|t|\to\infty$ for some $p_0>3$, the derivative $K'$ of $K$ satisfies
  $|K'(t)|=O(|t|^{-p_1})$ as $|t|\to\infty$ for some $p_1>1$ 
  and $T_j\exp(-c' \band_{T}T_j)=O(1)$ for any $c'>0$.  
\end{enumerate}
Suppose that $\band_{T}\to0$ and that $j$ depends on $T$ so that $T_j\band_{T}\to\infty$ as $T\to\infty$. 
Then, for weights given by~(\ref{eq:KernelWeights}), Assumption~\ref{assump:weights} is satisfied with 
\begin{align}
  \label{eq:kerneldelta}
&\delta_{j,T}\sim \frac{\|K\|_\infty}{\|K\|_1} (\band_{T}T_j)^{-1} \\
  \label{eq:kernelV}
& V(i,v;i',v')= 2\pi\,\frac{\|K\|_2^2}{\|K\|_1\|K\|_\infty} \; 2^{-i-i'}\,,\quad i,i'\geq0,\;0\leq v<2^{i},\,0\leq v'<2^{i'}\;. 
\end{align}
\end{lem}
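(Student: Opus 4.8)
The plan is to reduce each item of Assumption~\ref{assump:weights} to a Riemann-sum (equivalently, Poisson-summation) analysis of the sampled kernel $k\mapsto K\big((uT_j-k)/(\band_T T_j)\big)$, exploiting that the sampling step $(\band_T T_j)^{-1}\to 0$. First I would treat the normalisation. Since $K\ge 0$, a Riemann-sum estimate gives $\rho_{j,T}=\sum_k K\big((uT_j-k)/(\band_T T_j)\big)\sim \band_T T_j\,\|K\|_1$; for compactly supported $K$ the indices outside $\{0,\dots,T_j-1\}$ carry zero weight once $\band_T<\min(u,1-u)$, while in case (K-\ref{item:exp}) their contribution is $O\big(\band_T T_j\,(\band_T)^{p_0-1}\big)=o(\band_T T_j)$ by the polynomial decay of $K$. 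As the grid becomes dense and $K$ is continuous with $\sup_k K(\cdot)\to\|K\|_\infty$, this yields $\delta_{j,T}=\rho_{j,T}^{-1}\sup_k K=\frac{\|K\|_\infty}{\|K\|_1}(\band_T T_j)^{-1}(1+o(1))$, which is~(\ref{eq:kerneldelta}); it tends to $0$ because $\band_T T_j\to\infty$, and $\delta_{j+i,T}/\delta_{j,T}\sim T_j/T_{j+i}\to 2^i$ by~(\ref{eq:Tjasymp}), proving (\ref{item:delatAssump}). For (\ref{item:Locassump}), after replacing $Tu2^{-j}$ by $uT_j$ in~(\ref{eq:GammaDefQ}) (they differ by $O(1)$, contributing only $O(1)=O(\delta_{j,T}^{-q})$), a Riemann-sum bound gives $\Gamma_q(u;j,T)\le\rho_{j,T}^{-1}(\band_T T_j)^{q+1}\big(\int|y|^q K(y)\,\rmd y+o(1)\big)=O\big((\band_T T_j)^q\big)=O(\delta_{j,T}^{-q})$ for $q=0,1,2$, the moment integral being finite by compact support in (K-\ref{item:ind})--(K-\ref{item:comp}) and by $p_0>3$ in (K-\ref{item:exp}).

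The core of the proof is the asymptotics of $\Phi_{j,T}(\lambda;i,v)$. Unwinding~(\ref{eq:defweightsFourier}) and~(\ref{eq:calT}), $\Phi_{j,T}(\lambda;i,v)=\rho_{j-i,T}^{-1}\sum_{l\in\calT_j(i,v)}K\big(c_{i,v}-l/b_i\big)\rme^{\rmi l\lambda}$, where $c_{i,v}=(uT_{j-i}-v)/(\band_T T_{j-i})$ and $b_i=\band_T T_{j-i}2^{-i}=N+o(1)$ with $N:=\band_T T_j$. I would extend the sum to $l\in\Z$ (the omitted terms, nonzero only in case (K-\ref{item:exp}), sum to $o(\rho_{j-i,T})$ by the polynomial decay of $K$ together with $T_j\exp(-c'\band_T T_j)=O(1)$) and apply Poisson summation, which produces a main term $\rho_{j-i,T}^{-1}b_i\,\rme^{\rmi b_ic_{i,v}\lambda}\hat K(b_i\lambda)\sim\frac{2^{-i}}{\|K\|_1}\,\rme^{\rmi\theta_{i,v}\lambda}\hat K(N\lambda)$ with $\theta_{i,v}:=b_ic_{i,v}=2^{-i}(uT_{j-i}-v)$, plus aliasing terms $\rho_{j-i,T}^{-1}b_i\sum_{m\ne0}\hat K\big(b_i(\lambda-2\pi m)\big)\rme^{\rmi b_ic_{i,v}(\lambda-2\pi m)}$ which, for $\lambda\in[-\pi,\pi]$, are $o(N^{-1/2})\rho_{j-i,T}^{-1}$ because $|\hat K(\xi)|=o(|\xi|^{-3/2})$ in (K-\ref{item:comp}) or exponentially small in (K-\ref{item:exp}) once $|\xi|\ge\pi N$. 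In case (K-\ref{item:ind}) Poisson summation is inapplicable (the relevant transform decays only like $|\xi|^{-1}$); instead $\Phi_{j,T}(\lambda;i,v)$ equals $\rho_{j-i,T}^{-1}$ times a modulated Dirichlet sum $\sum_{l}\rme^{\rmi l\lambda}$ over a block of $N+O(1)$ consecutive integers centred near $uT_j$, to be handled by the elementary identity $\sum_{l=a}^{b}\rme^{\rmi l\lambda}=\rme^{\rmi(a+b)\lambda/2}\sin((b-a+1)\lambda/2)/\sin(\lambda/2)$.

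With this expansion in hand, (\ref{item:neglicAssump}) is immediate: for $\eta\le|\lambda|\le\pi$, $|\hat K(N\lambda)|\le\sup_{|\xi|\ge N\eta}|\hat K(\xi)|\to0$ (rate $(N\eta)^{-1}$ in (K-\ref{item:ind}), $o((N\eta)^{-3/2})$ in (K-\ref{item:comp}), exponential in (K-\ref{item:exp})), so $\delta_{j,T}^{-1/2}\sup_{\eta\le|\lambda|\le\pi}|\Phi_{j,T}(\lambda;i,v)|=O(N^{1/2})\,o(N^{-1/2})\to0$; in (K-\ref{item:ind}) one uses instead $|\sin(\lambda/2)|\ge\sin(\eta/2)$ in the Dirichlet sum. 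For (\ref{item:Vassump}), I insert the expansion into $\delta_{j,T}^{-1}\int_{-\pi}^\pi\Phi_{j,T}(\lambda;i,v)\overline{\Phi_{j,T}(\lambda;i',v')}\,\rmd\lambda$; the aliasing cross-terms stay negligible after multiplying by $\delta_{j,T}^{-1}\asymp N$ (using $\int_{-\pi}^\pi|\hat K(N\lambda)|\,\rmd\lambda=O(N^{-1})$, valid in (K-\ref{item:comp})--(K-\ref{item:exp})), so the limit is that of $\delta_{j,T}^{-1}\frac{2^{-i-i'}}{\|K\|_1^2}\int_{-\pi}^\pi\rme^{\rmi(\theta_{i,v}-\theta_{i',v'})\lambda}|\hat K(N\lambda)|^2\,\rmd\lambda$. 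The phase difference $\theta_{i,v}-\theta_{i',v'}$ stays bounded uniformly in $T$ (its $u$-dependent parts differ by $O(1)$ by~(\ref{eq:Tjasymp}), the rest is $O(1)$), so the substitution $\mu=N\lambda$ together with dominated convergence ($\rme^{\rmi(\theta_{i,v}-\theta_{i',v'})\mu/N}\to1$) and Plancherel ($\int_\R|\hat K(\mu)|^2\,\rmd\mu=2\pi\|K\|_2^2$) give the limit $\frac{\|K\|_1}{\|K\|_\infty}\cdot\frac{2^{-i-i'}}{\|K\|_1^2}\cdot 2\pi\|K\|_2^2=2\pi\frac{\|K\|_2^2}{\|K\|_1\|K\|_\infty}2^{-i-i'}$, which is~(\ref{eq:kernelV}) and does not depend on $v,v'$; in case (K-\ref{item:ind}) this value falls out directly from orthogonality of $\{\rme^{\rmi l\lambda}\}$ and the count of the overlap of the two integer blocks, together with $|S_i|\sim N$ and $\rho_{j-i,T}\sim 2^iN$. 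The main obstacle is precisely this uniform control of the aliasing and truncation remainders, which is where the three regimes on $K$ part ways: exact geometric sums for (K-\ref{item:ind}), $o(|\xi|^{-3/2})$ Fourier decay for (K-\ref{item:comp}), and exponential Fourier decay together with the side condition $T_j\exp(-c'\band_T T_j)=O(1)$ for (K-\ref{item:exp}).
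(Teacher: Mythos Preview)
Your Poisson-summation route is genuinely different from the paper's. For~(\ref{item:Vassump}) the paper never expands $\Phi_{j,T}$: it applies Parseval to rewrite $\int_{-\pi}^\pi\Phi_{j,T}(\lambda;i,v)\overline{\Phi_{j,T}(\lambda;i',v')}\,\rmd\lambda$ as the spatial sum $2\pi\sum_l\gamma_{j-i,T}(2^il+v)\gamma_{j-i',T}(2^{i'}l+v')$ and evaluates that as a Riemann approximation of $\int K^2$. For~(\ref{item:neglicAssump}) under~(K-\ref{item:comp}) and~(K-\ref{item:exp}) it inserts the inversion formula $K(t)=(2\pi)^{-1}\int\hat K(\xi)\rme^{\rmi\xi t}\,\rmd\xi$ into the \emph{finite} sum defining $\Phi_{j,T}$, producing a Dirichlet kernel that is bounded directly. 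Your single expansion feeding both items is tidier when it applies, and your handling of~(\ref{item:delatAssump}),~(\ref{item:Vassump}),~(\ref{item:Locassump}), and of all of~(K-\ref{item:ind}) and~(K-\ref{item:comp}), is correct.

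There is, however, a genuine gap in~(\ref{item:neglicAssump}) under~(K-\ref{item:exp}). Extending the sum over $\calT_j(i,v)$ to all of $\Z$ costs, uniformly in $\lambda$,
\[
\rho_{j-i,T}^{-1}\sum_{l\notin\calT_j(i,v)}\bigl|K(c_{i,v}-l/b_i)\bigr|
\;\asymp\;\rho_{j-i,T}^{-1}\,b_i\!\int_{|s|\gtrsim 1/\band_T}\!|K(s)|\,\rmd s
\;=\;O\bigl(\band_T^{\,p_0-1}\bigr),
\]
which is indeed $o(1)$ but \emph{not} $o(\delta_{j,T}^{1/2})=o(N^{-1/2})$ in general. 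Take $\band_T=1/\log T_j$: then $N=\band_T T_j\to\infty$, $\band_T\to0$, and $T_j\exp(-c'N)\to0$ for every $c'>0$, yet $\delta_{j,T}^{-1/2}\band_T^{\,p_0-1}\asymp T_j^{1/2}(\log T_j)^{-(p_0-1/2)}\to\infty$. Invoking the side condition $T_j\exp(-c'\band_T T_j)=O(1)$ does not help here, because it encodes the exponential decay of $\hat K$, not the polynomial tail of $K$ that governs this truncation. The paper's argument sidesteps the extension to $\Z$ entirely: it keeps the Dirichlet sum of length $N'=O(T_j)$, bounds $\|D_{N'}\|_\infty\le N'=O(T_j)$, and absorbs this factor using $\int_{|\xi|\gtrsim N\eta}|\hat K(\xi)|\,\rmd\xi=O(\rme^{-c''N})$ together with precisely the hypothesis $T_j\rme^{-c''N}=O(1)$. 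To repair your proof you would either have to adopt this device for~(K-\ref{item:exp}), or exploit the oscillation of $\rme^{\rmi l\lambda}$ (e.g.\ by summation by parts) to sharpen the tail bound beyond what the bare polynomial decay of $K$ gives.
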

\begin{proof}
For convenience, we will omit the subscripts $_T$ and $_{j,T}$ in this proof section when no ambiguity arises.
Under (K-\ref{item:ind}), one has $\rho=\band T_j+O(1)$.
Under (K-\ref{item:comp}), $K$ is uniformly continuous on its compact support $S$ and, since $u\in(0,1)$, $\band \to0$ and
$T_j\band\to\infty$, $S$ eventually falls between the extremal points of $\{(uT_j-k) / (\band T_j),\,k=0,\dots,T_j-1\}$. Thus,
$$
(\band T_j)^{-1}\sum_{k=0}^{T_j-1}K( (uT_j-k) / (\band T_j) )\to \int_S K(s)\;\rmd s=\|K\|_1\;.
$$
Under (K-\ref{item:exp}), using that $|K'(t)|\leq c(1+|t|)^{-p_1}$ for some $p_1>1$ and $c>0$, we get
\begin{align*}
(\band T_j)^{-1}\sum_{k=0}^{T_j-1}K( (uT_j-k) / (\band T_j) )-\int_{(u-1)/\band}^{u/\band}K(s)\;\rmd s
&=O\left((\band T_j)^{-2}\sum_{l=0}^{T_j}(1+l/(\band T_j))^{-p_1}\right)\\
&=O\left((\band T_j)^{-1}\right)\;.
\end{align*}
Hence the last three displays yield that,  in all cases,
\begin{equation}
  \label{eq:kernelrho}
\rho_{j,T}\sim \|K\|_1 (\band_{T}T_j) \;.  
\end{equation}
The asymptotic equivalence~(\ref{eq:kerneldelta}) then follows from the definitions~(\ref{eq:KernelWeights})
and~(\ref{eq:deltaDef}), and we obtain Assumption~\ref{assump:weights}(\ref{item:delatAssump}) by~(\ref{eq:Tjasymp}).

Let us now prove that Assumption~\ref{assump:weights}(\ref{item:Vassump}) holds under (K-\ref{item:ind}), (K-\ref{item:comp})
and (K-\ref{item:exp}), successively. Note that, by definition of~(\ref{eq:defweightsFourier}), we have
\begin{equation}
  \label{eq:ConvDisc}
\int_{-\pi}^\pi \Phi_{j,T}(\lambda;i,v)\overline{\Phi_{j,T}(\lambda;i',v')}\,\rmd\lambda
=2\pi\sum_{l\in \calT_j(i,v)\cap\calT_j(i',v')}\gamma_{j-i,T}(2^{i} l+v)\gamma_{j-i',T}(2^{i'} l+v')\;.
\end{equation}
Under (K-\ref{item:ind}), using $2^{-i}T_{j-i}\sim2^{-i'}T_{j-i'}\sim T_j$ by~(\ref{eq:deltaDef}), 
$\band T_j\to\infty$ and $\band\to0$, we easily get that
the supports of the sequences $\{\gamma_{j-i,T}(2^{i} l+v),\,l\geq0\}$ and $\{\gamma_{j-i',T}(2^{i'} l+v),\,l\geq0\}$
are eventually included in $\calT_j(i,v)\cap\calT_j(i',v')$ and their intersection is of length asymptotically equivalent to $\band T_j$.
Hence, using~(\ref{eq:ConvDisc}),~(\ref{eq:kerneldelta}) and~(\ref{eq:kernelrho}) with $\|K\|_1=\|K\|_\infty=1$, we obtain
that, in this case,  
$$
\delta^{-1}\int_{-\pi}^\pi \Phi_{j,T}(\lambda;i,v)\overline{\Phi_{j,T}(\lambda;i',v')}\,\rmd\lambda\sim 
2\pi\,\frac{T_j^2}{T_{j-i}T_{j-i'}}\;.
$$
By~(\ref{eq:Tjasymp}), this is Assumption~\ref{assump:weights}(\ref{item:Vassump}) with
$V(i,v;i',v')=2\pi2^{-i-i'}$ which coincides with~(\ref{eq:kernelV}) under (K-\ref{item:ind}).

Under (K-\ref{item:comp}), we proceed by interpreting the sum in~(\ref{eq:ConvDisc}) as
a Riemann approximation of $\int K^2$ up to a normalization factor.
For $l\in\calT_j(i,v)\cap\calT_j(i',v')$, we approximate
\begin{align*}
J_l&=(\band T_j)^{-1}\rho_{j-i,T}\rho_{j-i',T}\gamma_{j-i,T}(2^{i} l+v)\gamma_{j-i',T}(2^{i'} l+v')\\
&=(\band T_j)^{-1}\,K(\{uT_{j-i}-(2^il+v)\}/\{\band T_{j-i}\})\,K(\{uT_{j-i'}-(2^{i'}l+v')\}/\{\band T_{j-i'}\})\;,
\end{align*}
by the local average
$$
\tilde{J}_l=\int_{I_l}K^2(s)\,\rmd s \;,
$$
where $I_l$ is defined as the interval $[\{uT_j-(l+1)\}/\{\band T_j\},\{uT_j-l\}/\{\band T_j\}]$.
Observe that
$$
\sup_{s\in I_l}\left|s-\{uT_{j-i}-(2^il+v)\}/\{\band T_{j-i}\}\right|\leq 
\frac1{\band T_j}
+\left|\frac1{\band T_j}-\frac{2^i}{\band T_{j-i}}\right|\,|l- uT_j|
+u\frac{|T_{j-i}-2^iT_j|}{\band T_{j-i}}
\frac{|v|}{\band T_{j-i}}\;.
$$
Using~(\ref{eq:Tjasymp}), $i,v=O(1)$ and $l=O(T_j)$, we obtain, for any fixed integers $i$ and $v$,
\begin{equation}
  \label{eq:distanceRiemannApprox}
\sup_{0\leq l \leq 2 T_j}\sup_{s\in I_l}\left|s-\{uT_{j-i}-(2^il+v)\}/\{\band T_{j-i}\}\right|= O((\band T_j)^{-1})\;,
\end{equation}
and the same holds if $i,v$ is replaced by $i',v'$. Note that 
$$
\calT_j(i,v)\cap\calT_j(i',v')=\{0,1\dots,\{2^{-i}(T_{j-i}-v)\}\wedge\{2^{-i'}(T_{j-i'}-v)\}-1\}\;,
$$
which, by~(\ref{eq:Tjasymp}) and the fact that $K$ is compactly supported, is eventually 
contained in $\{0,1,\dots,2 T_j\}$ and eventually contains 
the set of $l$'s such that $\tilde{J}_l\neq0$, which is of size $O(\band T_j)$. 
By~(\ref{eq:distanceRiemannApprox}), we also see that, out of a set of length  $O(\band T_j)$, both $J_l$ and $\tilde{J}_l$
vanish. Hence we have
$$
\sum_{l\in\calT_j(i,v)\cap\calT_j(i',v')}\left|J_l-\tilde{J}_l\right|=O\left(\band T_j \sup_l|J_l-\tilde{J}_l|\right)\;. 
$$
Using~(\ref{eq:distanceRiemannApprox}) and the uniform continuity of $K$, there exists a constant $c$ such that 
$$
\sup_l|J_l-\tilde{J}_l|\leq (\band T_j)^{-1}\sup_{|s-t|,|s-t'|\leq c/(\band T_j)}\left|K^2(s)-K(t)K(t')\right|=o((\band T_j)^{-1})\;.
$$
The last two displays,~(\ref{eq:ConvDisc}) and the definitions of $J_l$ and $\tilde{J}_l$ thus yield
\begin{equation}
  \label{eq:PSunderKind}
 \frac{\rho_{j-i,T}\rho_{j-i',T}}{\band T_j}\int_{-\pi}^\pi \Phi_{j,T}(\lambda;i,v)\overline{\Phi_{j,T}(\lambda;i',v')}\,\rmd\lambda
\sim 2\pi\int K^2(s)\,\rmd s=2\pi\|K\|_2^2\;.
\end{equation}
By~(\ref{eq:kerneldelta}) and~(\ref{eq:kernelrho}), this gives Assumption~\ref{assump:weights}(\ref{item:Vassump}) with
$V(i,v;i',v')$ given by~(\ref{eq:kernelV}).

Under (K-\ref{item:exp}), we proceed similarly but we can no longer use that $K$ has a compact support. 
Instead we use that $K$ is bounded and $|K'(t)|\leq c'(3+|t|)^{-p_1}$ for some $p_1>1$ and $c'>0$ and thus, 
for any $c>0$, as soon as $(c+1)/(\band T_j)\leq 1$,
$$
\sup_{s\in I_l}\sup_{|t-s|,|t'-s|\leq c/(\band T_j)}\left|K^2(s)-K(t)K(t')\right|\leq c''\,(\band T_j)^{-1}
(2+|uT_j-l|/(\band T_j))^{-p}\;.
$$
With~(\ref{eq:distanceRiemannApprox}) and since the length of
$\calT_j(i,v)\cap\calT_j(i',v')$ is $O(T_j)$, we get
$$
\sum_{l\in\calT_j(i,v)\cap\calT_j(i',v')}\left|J_l-\tilde{J}_l\right|=
O\left((\band T_j)^{-2}\sum_{k=0}^{T_j}(1+k/(\band T_j))^{-p}\right)
=O\left((\band T_j)^{-1}\right)\;. 
$$
Moreover
$$
\sum_{l\in\calT_j(i,v)\cap\calT_j(i',v')}\tilde{J}_l=\int_{-u'/\band}^{u/\band}K^2(s)\,\rmd s\to\|K\|_2^2\;,
$$
where $u'=[\{2^{-i}(T_{j-i}-v)\}\wedge\{2^{-i'}(T_{j-i'}-v')\}]/T_j-u\to1-u$ by~(\ref{eq:Tjasymp}).
This yields~(\ref{eq:PSunderKind}) as in the previous case
and thus the same conclusion holds.

Let us now show that Assumption~\ref{assump:weights}~(\ref{item:neglicAssump}) holds under (K-\ref{item:ind}), (K-\ref{item:comp})
and (K-\ref{item:exp}), successively.
Under (K-\ref{item:ind}), we have
$$
\left|\Phi(\lambda;i,v)\right|=\rho^{-1}_{j-i,T}\left|\sum_{k=1}^{N}\rme^{\rmi k\lambda}\right|\;,
$$
where $N=N_{j,T}$ denotes the number of $l\in\calT_j(i,v)$ such that $\gamma_{j-i,T}(2^il+v)>0$. 
Since the Dirichlet kernel satisfies
$$
|D_N(\lambda)|=\left|\sum_{k=1}^{N}\rme^{\rmi k\lambda}\right|=\left|\frac{\sin(\lambda N/2)}{\sin(\lambda /2)}\right|\;,
$$
we observe that, for any $\eta>0$,
$\sup_{N\geq1}\sup_{\lambda\in[\eta,2\pi-\eta]}\left|D_N(\lambda)\right|<\infty$.
Hence, with~(\ref{eq:kerneldelta}) and~(\ref{eq:kernelrho}), we obtain Assumption~\ref{assump:weights}~(\ref{item:neglicAssump}).

Under (K-\ref{item:comp}) and (K-\ref{item:exp}), using that $K(t)=(2\pi)^{-1}\int\hat{K}(\xi)\,\rme^{\rmi \xi t}\rmd \xi$, we get
$$
  \Phi(\lambda;i,v)=(2\pi\rho_{j-i,T})^{-1}\int_{-\infty}^\infty\;\hat{K}(\xi)\;\rme^{\rmi\xi(uT_{j_i}-v)/(\band T_{j-i})}\; 
\sum_{l\in\tilde{\calT}_j(i,v)} \rme^{\rmi l(\lambda+2^i\xi/(\band T_{j-i}))}\, \rmd \xi \;,
$$
where $\tilde{\calT}_j(i,v)$ denotes the set of all $l\in\calT_j(i,v)$ such that $\gamma_{j-i,T}(2^il+v)$ does not vanish. 
Denote the length of $\tilde{\calT}_j(i,v)$ by $N=N_{j,T}$ as in the previous case. We thus obtain
$$
\left|\Phi(\lambda;i,v)\right|\leq (2\pi\rho_{j-i,T})^{-1}\int_{-\infty}^\infty\;\left|\hat{K}(\xi)\right|
\;|D_N(\lambda+2^i\xi/(\band T_{j-i}))|\;\rmd \xi \;.
$$
Let $\eta>0$.
Splitting the above integral as $\int_{-\infty}^\infty=\int_{2^i|\xi|/(\band T_{j-i})\leq \eta/2}+\int_{2^i|\xi|/(\band
  T_{j-i})>\eta/2}$, we obtain
\begin{align*}
\sup_{\lambda\in[\eta,\pi]}\left|\Phi(\lambda;i,v)\right|\leq 
&(2\pi\rho_{j-i,T})^{-1}\sup_{|\lambda|\in[\eta/2,\pi+\eta/2]}|D_N(\lambda)|\\
&+(2\pi\rho_{j-i,T})^{-1}\|D_N\|_\infty\int_{2^i|\xi|/(\band T_{j-i})>\eta/2} \left|\hat{K}(\xi)\right|\rmd\xi\;.
\end{align*}
Now, we have, for $\eta$ small enough, $\sup_{N\geq1}\sup_{|\lambda|\in[\eta/2,\pi+\eta/2]}|D_N|(\lambda)<\infty$,
$\|D_N\|_\infty\leq N$ and, under (K-\ref{item:comp}), $N=O(\band T_j)$ and
$\int_{2^i|\xi|/(\band T_{j-i})>\eta/2} \left|\hat{K}(\xi)\right|\rmd\xi=o((\band T_{j})^{-1/2})$, which, with the previous
display,~(\ref{eq:kerneldelta}) and~(\ref{eq:kernelrho}), implies Assumption~\ref{assump:weights}~(\ref{item:neglicAssump}).
Under (K-\ref{item:exp}), the same conclusion holds using that $N=O(T_j)$, 
$\int_{2^i|\xi|/(\band T_{j-i})>\eta/2} \left|\hat{K}(\xi)\right|\rmd\xi=O(\exp(-c2^{-i-1}\eta\band T_{j}))$ 
and $T_j\exp(-c'\band T_{j-i})=O(1)$ with $c'=c2^{-i-1}\eta$\;.

Finally we show that Assumption~\ref{assump:weights}~(\ref{item:Locassump}) holds under (K-\ref{item:ind}), (K-\ref{item:comp})
and (K-\ref{item:exp}), successively.
Using the definition~(\ref{eq:GammaDefQ}) and~(\ref{eq:Tjasymp}), we get, for some positive constant $C$,
$$
\Gamma_q(u;j,T)\leq C\frac{(\band T_j)^q}{\rho_{j,T}}\sum_{k=0}^{T_j-1}K_q((uT_j-k)/(\band T_j)) + O(\Gamma_0(u;j,T))\;,
$$
where $K_q(x)=K(x)|x|^q$. By definition of $\rho_{j,T}$, one has $\Gamma_0(u;j,T)=O(1)$. Under (K-\ref{item:ind}) and
(K-\ref{item:comp}), $K_q$ is bounded and compactly supported, so that $\sum_{k}K_q((uT_j-k)/(\band T_j))=O(\band T_j)$. This,
with~(\ref{eq:kernelrho}) and the previous display, implies~(\ref{eq:NormalizedWeightsSums}) for all $q\ge0$.
Hence, to conclude the proof, it only remains to show that, for $q=1,2$, under (K-\ref{item:exp}),
$$
\sum_{k=0}^{T_j-1}K_q((uT_j-k)/(\band T_j)) =O(\band T_j)\;.
$$ 
Using that $K(x)=O(|x|^{-p_0})$  as $x\to\pm\infty$, and $q\leq2$, we separate the sum $\sum_{k=0}^{T_j-1}$ in
$\sum_{|uT_j-k|\leq \band T_j}$ for which $K_q((uT_j-k)/(\band T_j))$ is $O(1)$ and $\sum_{|uT_j-k|> \band T_j}$  for which
$K_q((uT_j-k)/(\band T_j))$ is $O(|(uT_j-k)/(\band T_j)|^{2-p_0})$. Hence, we get
$$
\sum_{k=0}^{T_j-1}K_q((uT_j-k)/(\band T_j)) =O\left(\band T_j\right)+O\left(\band T_j^{p_0-2} \sum_{l\geq \band T_j-1}l^{2-p_0}\right)\;.
$$
Observing that $\band T_j\to\infty$ and $p_0>3$, we obtain the desired bound.
\end{proof}

\begin{lem}\label{lem:recursive-weights}
Suppose that $\band_{T}\to0$ and $T_j\band_{T}\to\infty$. 
Then, for weights given by~(\ref{eq:recWeights}), Assumption~\ref{assump:weights} is satisfied with 
\begin{align}
  \label{eq:recursivedelta}
&\delta_{j,T}\sim (\band_{T}T_j)^{-1} \\
  \label{eq:recursiveV}
& V(i,v;i',v')=\pi\;2^{-i-i'},\quad i,i'\geq0,\;v\in\{0,\dots,2^{i}-1\},\;v'\in\{0,\dots,2^{i'}-1\}\;. 
\end{align}
\end{lem}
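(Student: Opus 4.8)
\emph{Proof proposal.} The plan is to follow the same route as the proof of Lemma~\ref{lem:two-sided-kernel}, taking advantage of the closed form of the recursive weights to make every computation explicit. Throughout write $b=b_{j,T}=\band_T T_j$ and $N=N_{j,T}=[uT_j]$, so that by~(\ref{eq:rec_rho}) and~(\ref{eq:recWeights}),
$$
\rho_{j,T}=\frac{1-\rme^{-N/b}}{1-\rme^{-1/b}}\,,\qquad \gamma_{j,T}(k)=\rho_{j,T}^{-1}\,\rme^{-(N-1-k)/b}\,\1_{\{0\le k<N-1\}}\;.
$$
Since $\band_T\to0$ forces $T_j\to\infty$, and $N/b\asymp u/\band_T\to\infty$, one has $\rho_{j,T}\sim b$, whence by~(\ref{eq:deltaDef}) $\delta_{j,T}=\rho_{j,T}^{-1}\sim(\band_T T_j)^{-1}$, which is~(\ref{eq:recursivedelta}). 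From~(\ref{eq:Tjasymp}), $T_{j+i}=2^{-i}T_j+O(1)$ and $T_{j-i}=2^{i}T_j+O(1)$ for fixed $i$, hence $b_{j+i,T}\sim2^{-i}b$ and $\delta_{j+i,T}\sim2^{i}\delta_{j,T}$, giving Assumption~\ref{assump:weights}(\ref{item:delatAssump}).

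For Assumption~\ref{assump:weights}(\ref{item:Vassump}) I would start, arguing as in the proof of Lemma~\ref{lem:two-sided-kernel} (cf.~(\ref{eq:ConvDisc})), from the Parseval identity
$$
\int_{-\pi}^\pi \Phi_{j,T}(\lambda;i,v)\overline{\Phi_{j,T}(\lambda;i',v')}\,\rmd\lambda
=2\pi\sum_{l\in\calT_j(i,v)\cap\calT_j(i',v')}\gamma_{j-i,T}(2^il+v)\,\gamma_{j-i',T}(2^{i'}l+v')\;.
$$
Using $T_{j-i}=2^iT_j+O(1)$, $[uT_{j-i}]=2^iN+O(1)$ and $b_{j-i,T}=2^ib\,(1+O(1/T_j))$, and noting that although $(N-l)/b\le N/b=O(1/\band_T)$ is unbounded one has $N/(bT_j)=O(1/b)\to0$, I would show that $\gamma_{j-i,T}(2^il+v)=\rho_{j-i,T}^{-1}\rme^{-(N-l)/b}(1+o(1))$ uniformly over $l$ in the (joint) support $\{0,\dots,N+O(1)\}$. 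Substituting, setting $m=N-l$, using $\rho_{j-i,T}\sim2^ib$ and $N/b\to\infty$, the double sum becomes a geometric series:
$$
\sum_{l}\gamma_{j-i,T}(2^il+v)\gamma_{j-i',T}(2^{i'}l+v')\sim\rho_{j-i,T}^{-1}\rho_{j-i',T}^{-1}\sum_{m\ge0}\rme^{-2m/b}\sim 2^{-i-i'}b^{-2}\cdot\tfrac b2\;,
$$
so that $\delta_{j,T}^{-1}\int_{-\pi}^\pi\Phi_{j,T}(\cdot;i,v)\overline{\Phi_{j,T}(\cdot;i',v')}\,\rmd\lambda\to\pi\,2^{-i-i'}$, which is~(\ref{eq:recursiveV}).

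For Assumption~\ref{assump:weights}(\ref{item:neglicAssump}) the same exponential approximation gives $\Phi_{j,T}(\lambda;i,v)=\rho_{j-i,T}^{-1}\rme^{-N/b}(1+o(1))\sum_{l=0}^{N+O(1)}(\rme^{1/b+\rmi\lambda})^l$; bounding the geometric sum by $2/|\rme^{1/b+\rmi\lambda}-1|$ and observing that $|\rme^{1/b+\rmi\lambda}-1|^2=\rme^{2/b}-2\rme^{1/b}\cos\lambda+1$ is bounded below by a positive constant depending only on $\eta$ for $|\lambda|\in[\eta,\pi]$ and $b$ large, one gets $\sup_{\eta\le|\lambda|\le\pi}|\Phi_{j,T}(\lambda;i,v)|=O(b^{-1})$ and hence $\delta_{j,T}^{-1/2}\sup_{\eta\le|\lambda|\le\pi}|\Phi_{j,T}(\lambda;i,v)|=O(b^{-1/2})\to0$. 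Finally, for Assumption~\ref{assump:weights}(\ref{item:Locassump}): $\Gamma_0(u;j,T)=1$ by~(\ref{eq:normWeights}); and since $Tu2^{-j}=N+O(1)$, one has $|k-Tu2^{-j}|=(N-k)+O(1)$ on the support of $\gamma_{j,T}$, so with $m=N-1-k$,
$$
\Gamma_q(u;j,T)\sim\rho_{j,T}^{-1}\sum_{m\ge0}(m+1)^q\,\rme^{-m/b}=O\!\left(b^{-1}\cdot b^{q+1}\right)=O(b^q)=O\!\left(\delta_{j,T}^{-q}\right)
$$
for $q=1,2$, which is~(\ref{eq:NormalizedWeightsSums}). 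The only genuinely delicate step is the uniform replacement of $\gamma_{j-i,T}(2^il+v)$ by $\rho_{j-i,T}^{-1}\rme^{-(N-l)/b}$: one must track how the decimation across scales affects both $[uT_{j-i}]$ and $b_{j-i,T}$ and check that the accumulated error in the exponent stays $o(1)$ uniformly in $l$, even though $(N-l)/b$ is itself unbounded. Once this is settled everything reduces to summing geometric series, so the recursive case is in fact somewhat cleaner than the kernel case of Lemma~\ref{lem:two-sided-kernel}.
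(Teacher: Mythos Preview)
Your approach is correct and essentially the same as the paper's: both exploit the closed geometric-series form of the recursive weights, use the Parseval identity~(\ref{eq:ConvDisc}) for part~(\ref{item:Vassump}), and hinge on the observation that the exponent corrections coming from $T_{j-i}=2^iT_j+O(1)$ and $[uT_{j-i}]=2^iN+O(1)$ are $O(1/b)$ uniformly in $l$ (this is exactly the paper's computation that the quantity it calls $A$ tends to $1$). The only difference is the order of operations: the paper sums the geometric series with the \emph{exact} exponents $2^i/(\band_T T_{j-i})$ and simplifies the closed form afterward, whereas you first reduce all exponents to the common $1/b$ and then sum.

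One small slip to fix in part~(\ref{item:neglicAssump}): writing $\Phi_{j,T}(\lambda;i,v)=\rho_{j-i,T}^{-1}\rme^{-N/b}(1+o(1))\sum_l(\rme^{1/b+\rmi\lambda})^l$ as a \emph{multiplicative} $(1+o(1))$ is not justified. For $|\lambda|\ge\eta$ the geometric sum has cancellation and is $O(1)$, so the main term is $O(b^{-1})$; but the approximation error, bounded termwise in absolute value, is $\rho_{j-i,T}^{-1}\sum_m\rme^{-m/b}\,O(1/b)=O(b^{-1})$ as well, hence comparable rather than negligible relative to the main term. The additive statement $\Phi=[\text{main geometric sum}]+O(b^{-1})$ is what your computation actually gives, and it still yields $\sup_{|\lambda|\ge\eta}|\Phi|=O(b^{-1})$, so the conclusion stands. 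The paper sidesteps this by summing the exact geometric series $\sum_k\rme^{-k\{\rmi\lambda+2^i/(\band_T T_{j-i})\}}$ directly, which is marginally cleaner for this part.
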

\begin{proof}
For convenience, we will omit the subscripts $_T$ and $_{j,T}$ in this proof when no ambiguity arises.
We set $u_j=[u\,T_j]$ in the following.
Using~(\ref{eq:rec_rho}), $\band T_j\to\infty$, $\band\to0$ and $u_j\sim u T_j$, we get that
\begin{equation} 
\label{eq:rec_rho_asymp}
\rho \sim (\band T_j) \;.
\end{equation}
Observing that $\delta=\gamma(u_j)=\rho^{-1}$, we get~(\ref{eq:recursivedelta}) and
Assumption~\ref{assump:weights}(\ref{item:delatAssump}) follows.

Let us now show that Assumption~\ref{assump:weights}(\ref{item:Vassump}) holds.
Using~(\ref{eq:ConvDisc}), we find that
\begin{align*}
\int_{-\pi}^\pi \Phi_{j,T}(\lambda;i,v)\overline{\Phi_{j,T}(\lambda;i',v')}\,\rmd\lambda
=&\frac{2\pi}{\rho_{j-i,T}\rho_{j-i',T}}
\exp\left(-\frac{u_{j-i}+v+1}{\band T_{j-i}}-\frac{u_{j-i'}+v'+1}{\band T_{j-i'}}\right)\\
&\times\,\sum_{l=0}^{N-1}\rme^{l\{2^i/(\band T_{j-i})+2^{i'}/(\band T_{j-i'})\}} \;,
\end{align*}
where $N=\{2^{-i}(u_{j-i}-v)\}\wedge\{2^{-i'}(u_{j-i'}-v')\}$.
Using~(\ref{eq:Tjasymp}),~(\ref{eq:rec_rho_asymp}), $\band T_j\to\infty$, $\band\to0$ and $u_j\sim u T_j$,
we obtain $\rho_{j-i,T}\sim 2^i(\band T_j)$, $(u_{j-i}+v+1)/(\band T_{j-i})\sim u/\band$,
$2^i/(\band T_{j-i})\sim 1/(\band T_j)$, $N2^i/(\band T_{j-i})\sim u/\band$
and similar result with $i',v'$ replacing $i,v$. Using these asymptotic equivalences and the previous display, we obtain  
\begin{equation} 
\label{eq:asympIntRec1}
\int_{-\pi}^\pi \Phi_{j,T}(\lambda;i,v)\overline{\Phi_{j,T}(\lambda;i',v')}\,\rmd\lambda
\sim\frac{2\pi}{2^{i+i'}(\band T_j)^2}\frac{A-o(1)}{2/(\band T_j)}\;,
\end{equation}
where 
$$
A=\exp\left(-\frac{u_{j-i}+v+1}{\band T_{j-i}}-\frac{u_{j-i'}+v'+1}{\band T_{j-i'}}
+N\left\{\frac{2^i}{\band T_{j-i}}+\frac{2^{i'}}{\band T_{j-i'}}\right\}\right)\;.
$$
Using~(\ref{eq:Tjasymp}), we have $N = uT_j + O(1)$ and $u_{j-i}+v+1=uT_j2^i+O(1)$. Thus
$N2^i-(u_{j-i}+v+1)=O(1)$ and the same holds with $i',v'$ replacing $i,v$. This implies that
$A=\exp\left(O\left((\band T_{j})^{-1}\right)\right)\to1$. This,~(\ref{eq:asympIntRec1}) and~(\ref{eq:recursivedelta}) yield
Assumption~\ref{assump:weights}(\ref{item:Vassump}) with $V(i,v;i',v')$ defined by~(\ref{eq:recursiveV}).

We finally show that Assumption~\ref{assump:weights}(\ref{item:neglicAssump}) holds.
By setting $N=2^{-i}(u_{j-i}+v)$ and $k=N-1-l$ in~(\ref{eq:defweightsFourier}), we obtain
$$
\left|\Phi(\lambda;i,v)\right|=\rho^{-1}\left|\sum_{k=0}^{N-1}\rme^{-k\{\rmi\lambda+2^i/(\band T_{j-i})\}}\right|
\leq\rho^{-1}\frac{1+\rme^{-N2^i/(\band T_{j-i})}}{\left|1-\rme^{-\rmi\lambda-2^i/(\band T_{j-i})}\right|}\;.
$$
Using that $N2^i/(\band T_{j-i})\sim\band^{-1}\to\infty$, $\delta^{-1/2}\rho^{-1}\to0$ and that, for any $\eta>0$, 
$|1-z|$ does not vanish on the compact set of complex numbers $z=r\rme^{\rmi\theta}$ such that $r\in[0,1]$ and 
$\eta\leq|\theta|\leq\pi$ and thus is lower bounded on this set, we obtain
Assumption~\ref{assump:weights}(\ref{item:neglicAssump}). 

Finally we show that Assumption~\ref{assump:weights}~(\ref{item:Locassump}) holds. By~(\ref{eq:Tjasymp}), we have, for any
$q\geq0$,
$$
\Gamma_q(u;j,T)=\rho^{-1}\sum_{k=0}^{u_j-1}\rme^{-(u_j-1-k)/(\band T_j)}|u_j-1-k|^q + O\left(\Gamma_0(u;j,T)\right)\;.
$$
Observe that $\Gamma_0(u;j,T)=1$. Setting $l=u_j-1-k$, and separating the above sum over $l\leq[q\band T_j]+1$ for which we
bound the exponential by 1 and
$l\geq[q\band T_j]+2$ so that $\rme^{-x/(\band T_j)}x^q$ is decreasing on $x\geq l-1$, we get
\begin{align*}
\sum_{k=0}^{u_j-1}\rme^{-(u_j-1-k)/(\band T_j)}|u_j-1-k|^q&\leq\sum_{l=0}^{[\band T_j]+1}l^q+\sum_{l\geq[\band
  T_j]+2}\rme^{-l/(\band T_j)}l^q\\
&\leq O\left((\band T_j)^{q+1}\right)+\int_{x\geq[q\band T_j]+1}\rme^{-x/(\band T_j)}x^q\rmd x\\
&=O\left((\band T_j)^{q+1}\right)\;.
\end{align*}
The last two displays,~(\ref{eq:rec_rho_asymp}) and~(\ref{eq:recursivedelta}) yield~(\ref{eq:NormalizedWeightsSums}), which
achieves the proof.
\end{proof}

\bibliographystyle{elsarticle-harv}
\bibliography{lrd}
\end{document}